\newfont{\cyr}{wncyr10 scaled 1100}
\newcommand*\ZZ{|[draw,circle]| \Z_2}
\numberwithin{equation}{section}
\DeclareSymbolFont{cyrletters}{OT2}{wncyr}{m}{n}
\DeclareMathSymbol{\Sha}{\mathalpha}{cyrletters}{"58}
\theoremstyle{plain}
\newtheorem{theorem}{Theorem}[section]
\newtheorem*{theorem*}{Theorem}
\newtheorem{lemma}[theorem]{Lemma}
\newtheorem{proposition}[theorem]{Proposition}
\numberwithin{equation}{section}
\theoremstyle{definition}
\newtheorem{definition}[theorem]{Definition}
\newtheorem{assumption}[theorem]{Assumption}
\theoremstyle{remark}
\newtheorem{obswr}[theorem]{Observation}
\newtheorem{remarkwr}[theorem]{Remark}
\newtheorem{intro-definition}[theorem]{Definition}
\newenvironment{remark}{\begin{remarkwr}\begin{upshape}}{\end{upshape}\end{remarkwr}}
\newenvironment{myproof}[2] {\paragraph{\emph{Proof of {#1} {#2} }}}{\hfill$\square$}
\def\Gal{\mathrm{Gal}}
\def\GL{\mathrm{GL}}
\def\GSpin{\mathrm{GSpin}}
\def\Nilp{(\mathrm{Nilp})}
\def\det{\mathrm{det}}
\def\Lie{\mathrm{Lie}}
\def\loc{\mathrm{loc}}
\def\ord{\mathrm{ord}}
\def\Ad{\mathrm{Ad}}
\def\new{\mathrm{new}}
\def\ac{\mathrm{ac}}
\def\frakm{\mathfrak{m}}
\def\interX{\mathfrak{X}}
\def\ss{\mathrm{ss}}
\def\ssp{\mathrm{ssp}}
\def\ac{\mathrm{ac}}
\def\mix{\mathrm{mix}}
\def\ram{\mathrm{ram}}
\def\Dr{\mathrm{Dr}}
\def\As{\mathrm{As}}
\DeclareMathOperator{\Hom}{Hom}
\DeclareMathOperator{\End}{End}
\def\calD{\mathcal{D}}
\def\calH{\mathcal{H}}
\def\calL{\mathcal{L}}
\def\calN{\mathcal{N}}
\def\calM{\mathcal{M}}
\def\calO{\mathcal{O}}
\def\calP{\mathcal{P}}
\def\calS{\mathcal{S}}
\def\calZ{\mathcal{Z}}
\def\frakd{\mathfrak{d}}
\def\frakn{\mathfrak{n}}
\def\Adel{\mathbf{A}}
\def\CC{\mathbf{C}}
\def\FF{\mathbf{F}}
\def\PP{\mathbf{P}}
\def\QQ{\mathbf{Q}}
\def\TT{\mathbb{T}}
\def\XX{\mathbb{X}}
\def\ZZ{\mathbf{Z}}
\def\rmA{\mathrm{A}}
\def\rmB{\mathrm{B}}
\def\rmC{\mathrm{C}}
\def\rmD{\mathrm{D}}
\def\rmF{\mathrm{F}}
\def\rmG{\mathrm{G}}
\def\rmH{\mathrm{H}}
\def\rmI{\mathrm{I}}
\def\rmE{\mathrm{E}}
\def\rmK{\mathrm{K}}
\def\rmM{\mathrm{M}}
\def\rmN{\mathrm{N}}
\def\rmU{\mathrm{U}}
\def\rmV{\mathrm{V}}
\def\rmR{\mathrm{R}}
\def\rmS{\mathrm{S}}
\def\rmQ{\mathrm{Q}}
\def\rmT{\mathrm{T}}
\def\rmZ{\mathrm{Z}}
\def\rmX{\mathrm{X}}
\def\rmW{\mathrm{W}}
\newcommand{\Iw}{\mathrm{Iw}}
\newcommand{\Spf}[1]{\mathrm{Spf} (#1)}
\begin{document}

\title[Hilbert--Blumenthal surfaces and distinguished periods]
{Flach system on Quaternionic Hilbert--Blumenthal surfaces and distinguished periods}

\author{Haining Wang }
\address{\parbox{\linewidth}{address:\\Shanghai Center for Mathematical Sciences,\\ Fudan University,\\No,2005 Songhu Road,\\Shanghai,200438, China.~ }}
\email{wanghaining1121@outlook.com}

\begin{abstract}
We study arithmetic properties of certain quaternionic periods of Hilbert modular forms arising from base change of elliptic modular forms. These periods which we call the distinguished periods are closely related to the notion of distinguished representation that appear in work of Harder--Langlands--Rapoport, Lai, Flicker--Hakim on the Tate conjectures for the Hilbert--Blumenthal surfaces and their quaternionic analogues. In particular, we prove an integrality result on the ratio of the distinguished period and the quaternionic Petersson norm associated to the modular form. Our method is based on an Euler system argument initiated by Flach by producing elements in the motivic cohomologies of the quaternionic Hilbert--Blumenthal surfaces with control of their ramification behaviours. 
\end{abstract}
   
\subjclass[2000]{Primary 11G18, 11R34, 14G35}
\date{\today}

\maketitle
\tableofcontents
\section{Introduction}
In this article, we study arithmetic properties of certain integral periods of quaternionic Hilbert modular forms arising from base change.  These periods are closely related to the notion of distinguished representation which play a prominent role in the work of Harder--Langlands--Rapoport \cite{HLR}, Lai \cite{Lai}, Flicker--Hakim \cite{FH} on the Tate conjectures for the Hilbert--Blumenthal surfaces and their quaternionic analogues. We will therefore call these periods the distinguished periods. While the representation theoretic properties of these distinguished periods have been amply studied in the literature and have also been generalized and studied for higher rank groups using the relative trace formula, the arithmetic properties seem to be less well understood. We will provide some results in this direction below. More precisely, we will compare the distinguished period with another well-known period, namely the Petersson norm of the associated definite quaternionic modular form. We will show that the $\lambda$-adic valuation of the distinguished period will be greater than or equal to the  $\lambda$-adic valuation of  the Petersson norm for a prime $\lambda$ of the Hecke field over a fixed rational prime $l$. It is well-known that  the Petersson norm of a definite quaternionic modular form can be related to congruence module of the quaternionic modular form and hence our result shows that the distinguished period also captures this information. The distinguished period should also contain the information of the base change congruence module studied by Hida \cite{Hida} and Urban--Tilouine \cite{UT}, but we will address this question in another occasion.

This result should be viewed as an analogue of the integrality result of ratio of Petersson norms of modular and indefinite quaternionic modular forms proved by Prasana in \cite{pras} where he also shows that the positive valuation part of this ratio should be related to the local Tamagawa factors which measures quantitatively the level lowering congruences of the modular form. Our result should also be reminiscent of the result of Ribet--Takahashi \cite{RT} comparing the degree of the modular parametrization of elliptic curve by a classical modular curve and the degree of the parametrization of the elliptic curve by an indefinite Shimura curve. The results of Prasana have been extended to a program to understand of ratio of Petersson norms of general quaternionic Hilbert modular forms, \cite{IP1}, \cite{IP2}. It would be interesting to see if our method could be useful in this program.

The method to study these distinguished periods is based on the so-called Flach system of Jacquet--Langlands type introduced in the companion article \cite{Wang}. We will realize the $\lambda$-adic valuation of the distinguished period as a natural bound for the length of certain subspace of the Bloch--Kato Selmer group of the middle degree cohomology group of a suitable quaternionic Hilbert--Blumenthal surface. This subspace turns out to be isomorphic to the Bloch--Kato Selmer group of the adjoint Galois representation attached to the definite quaternionic modular form whose base-change defines the distinguished period. The length of the latter Selmer group can be determined using the Taylor--Wiles method and is equal to the $\lambda$-adic valuation of the Petersson norm of the definite quaternionic modular form and hence is less than equal to the $\lambda$-adic valuation of the distinguished period. This shows the desired integrality result of the ratio of periods as mentioned above. We remark that Flach system of Jacquet--Langlands type is a simplified geometric Euler system in the sense of \cite{Weston3} that should reflect Jacquet--Langlands correspondence in the cohomologies of Shimura varieties. In \cite{Wang}, this is reflected using arithmetic level raising results on the product of two Shimura curves. In this article, this is realized using the Tate conjecture for the special fiber of the quaternionic Hilbert--Blumenthal surface proved in  \cite{TX}, see also \cite{Lan}.  The version of the Tate conjectures for special fibers of Shimura varieties needed to construct similar Flach systems as in this article have been proved for a large class of Shimura varieties in \cite{TX} and \cite{XZ}, therefore we expect the strategy in this article and \cite{Wang} should be useful to study the integrality questions of ratio of periods coming from definite Shimura sets for other reductive groups. In a very similar setting, we will construct a Flach system of Jacquet--Langlands type on Picard modular surfaces and study the period of automorphic forms on $\rmU(3)$ coming from theta lifts of automorphic forms of $\rmU(2)$. We will also consider distinguished period arising from base change to a real quadratic extension 
of a unitary group $\rmU(n)$, our method seems to also work in this case and we will pursue this in another occasion.

It should also be interesting to note that usually automorphic periods  have good connections to special values of $L$-functions and Selmer groups can be studied in terms of these special values through Euler system arguments relating these automorphic periods and hence the special values to the size of the Selmer group via the so-called reciprocity formula. Here we turn things around and use known bounds for Selmer groups to study integral periods.

 \subsection{Main results} We will introduce some notations before we state our main results. Let $l\geq 5$ be a fixed prime. Let $f$ be a normalized newform in $\rmS_{2}(\Gamma_{0}(\rmN))$ whose associated automorphic representation of $\GL_{2}(\Adel)$ is denoted by $\pi$. We will assume that $\rmN$ admits a decomposition $\rmN=\rmN^{+}\rmN^{-}$ with $(\rmN^{+}, \rmN^{-})$ such that $\rmN^{-}$ is square-free and consists of odd number of prime factors. Let $\overline{\rmB}$ be the definite quaternion algebra of discriminant $\rmN^{-}$. Then $f$ admits a normalized Jacquet--Langlands transfer $f^{\dagger}$ to an automorphic form for the group $\rmG(\overline{\rmB})=\overline{\rmB}^{\times}$ whose associated automorphic representation will be denoted by $\pi^{\circ}=\pi^{\overline{\rmB}}$. We will be concerned with the base change $\phi^{\dagger}$ of $f^{\dagger}$ by a real quadratic field $\rmF$. Let $\rmE=\QQ(f)$ be a Hecke field of $f$ and $\iota_{l}: \overline{\QQ}\hookrightarrow \CC_{l}$ be a fixed embedding which defines a place $\lambda$ in $\rmE$. We will denote by $\rmE_{\lambda}$ the completion of $\rmE$ at $\lambda$ and by $\calO_{\lambda}$ its valuation ring. We will also denote by $\lambda$ the maximal ideal of $\calO_{\lambda}$ and will fix a uniformizer $\varpi$ for the ideal $\lambda$. Let $k_{\lambda}$ be the residue field of $\calO_{\lambda}$.
 
 Let $\overline{\rmQ}=\overline{\rmB}\otimes\rmF$ be the definite quaternion algebra obtained by base change to $\rmF$ and let $\rmG(\overline{\rmQ})$ be the algebraic group defined by the Weil restriction of $\overline{\rmQ}^{\times}$ from $\rmF$ to $\QQ$. Then $\phi^{\dagger}$ is contained in the automorphic representation $\pi^{\overline{\rmQ}}$ of $\rmG(\overline{\rmQ})$ obtained by base change of $\pi^{\circ}$ to $\rmF$. The automorphic forms $\phi^{\dagger}$ can be realized as a smooth function valued in $\calO_{\lambda}$ on certain Shimura set $\rmZ(\overline{\rmQ})$ associated to the group $\rmG(\overline{\rmQ})$ with suitable level structure. The Shimura set $\rmZ(\overline{\rmB})$ defined similarly from $\rmG(\overline{\rmB})$ can be embedded in $\rmZ(\overline{\rmQ})$ and this can be seen as an analogue of the Hirzebruch--Zagier morphism for the Shimura sets. We define the {\em distinguished period} by
 \begin{equation*}
 \calP_{\mathrm{dis}}(\phi^{\dagger})=\sum\limits_{z\in \rmZ(\overline{\rmB})}\phi^{\dagger}(z).
 \end{equation*}
Recall a cupsidal automorphic representation $\Pi$ of $\GL_{2}(\Adel_{\rmF})$ is called distinguished if there is a function $\phi$ in $\Pi$ such that
\begin{equation*}
\calP^{\GL_{2}}_{\mathrm{dis}}(\phi)=\int_{\GL_{2}(\mathbf{Q})\backslash\GL_{2}(\mathbf{A})}\phi(g)dg
\end{equation*}
is non-vanishing. It is known that $\Pi$ is distinguished only if $\Pi_{\infty}$ is in the discrete series and $\Pi$ comes from a cupsidal automorphic representation $\pi$ of $\GL_{2}(\Adel)$ via base change. The notion of distinguished representation has played an prominent role in the proof of the Tate conjecture of Hilbert--Blumenthal surfaces by \cite{HLR} and the quaternionic Hilbert--Blumenthal surfaces by \cite{Lai} and \cite{FH}. Moreover it is known that being distinguished is preserved under Jacquet--Langlands correspondence, see Proposition  
\ref{FH} for the precise statement. 

The above discussion should have explained that the distinguished period is of great arithmetic and representation theoretic interest and also why we stick to the situation when the period is defined by an automorphic form coming from base change. This set-up also leads us to consider a closely related period of $f$ namely the {Petersson norm} of $f^{\dagger}$, this is defined by
\begin{equation*}
\calP(f^{\dagger})=\sum\limits_{z\in \rmZ(\overline{\rmB})}f^{\dagger}(z)^{2}.
\end{equation*}
and is referred to as the {\em quaternionic period} of $f$ in \cite{Wang}. Our main result will be concerned with the $\lambda$-integrality of the ratio $\calP_{\mathrm{dis}}(\phi^{\dagger})/\calP(f^{\dagger})$ of these two periods. Before we state it, we introduce several assumptions that are imposed on the Galois representation attached to $f$ or equivalently to $f^{\dagger}$. Let $\rho_{\pi^{\circ}}: \rmG_{\QQ}\rightarrow \GL_{2}(\rmE_{\lambda})=\GL(\rmV_{\pi^{\circ}})$ be the Galois representation associated to $f^{\dagger}$ by the Eichler--Shimura construction and Jacquet--Langlands correspondence. Recall also that $\pi^{\circ}$ is the automorphic representation corresponding to $f^{\dagger}$. We denote by $\overline{\rho}_{\pi^{\circ}}$ the residual representation of $\rho_{\pi^{\circ}}$. Let $\Sigma^{+}$ be the set of primes dividing $\rmN^{+}$. Let $\Sigma^{-}_{\mathrm{ram}}$ be the set of $r$ dividing $\rmN^{-}$ such that $l\mid r^{2}-1$ and $\Sigma^{-}_{\mathrm{mix}}$ be the set of $r$ dividing $\rmN^{-}$ such that $l\nmid r^{2}-1$.

\begin{assumption}\label{intro-ass1}
We make the following assumptions on $\bar{\rho}_{\pi^{\circ}}$:
\begin{enumerate}
\item  $\bar{\rho}_{\pi^{\circ}}\vert_{\rmG_{\QQ(\zeta_{l})}}$ is absolutely irreducible;
\item The image of $\bar{\rho}_{\pi^{\circ}}$ contains $\GL_{2}(\FF_{l})$;
\item $\overline{\rho}_{\pi^{\circ}}$ is minimal at primes in $\Sigma^{+}$ in the sense that all the liftings of $\overline{\rho}_{\pi^{\circ}}\vert_{\rmG_{\QQ_{r}}}$ are minimally ramified for $r\in\Sigma^{+}$;
\item $\overline{\rho}_{\pi^{\circ}}$ is ramified at primes in $\Sigma^{-}_{\mathrm{ram}}$.
\end{enumerate}
\end{assumption}

Let $\rmT_{\pi^{\circ}}$ be a suitable Galois stable lattice in $\rho_{\pi^{\circ}}$ which we will make precise in the main body of this article and $\rmT_{\pi^{\circ}, n}$ be the reduction of $\rmT_{\pi^{\circ}}$ modulo $\lambda^{n}$. We will let $\rmM_{n}=\mathrm{Sym}^{2}(\rmT_{\pi^{\circ}, n})$ be the symmetric square of $\rmT_{\pi^{\circ}, n}$. 

\begin{theorem}\label{main-intro}
Let $f\in \rmS_{2}(\Gamma_{0}(\rmN))$ be a newform of weight $2$ with $\rmN=\rmN^{+}\rmN^{-}$ such that $\rmN^{-}$ is squarefree and has odd number of prime factors. Let $f^{\dagger}$ be the automorphic form on $\rmZ(\overline{\rmB})$ corresponding to $f$ under the Jacquet--Langlands correspondence and let $\phi^{\dagger}$ be the base change of $f^{\dagger}$ considered as an automorphic form on  $\rmZ(\overline{\rmQ})$. 
\begin{enumerate}
\item We assume that the residual Galois representation $\overline{\rho}_{\pi^{\circ}}$ satisfies Assumption \ref{intro-ass1}; 
\item We further assume that $\rmH^{1}(\Delta_{n}, \rmM_{n})=0$ 
for every $n\geq 1$ where $\Delta_{n}=\Gal(\QQ(\rmM_{n})/\QQ)$ for the splitting field $\QQ(\rmM_{n})$ of the Galois module $\rmM_{n}$. 
\end{enumerate}
Then we have the following inequality
\begin{equation*}
\ord_{\lambda}(\calP_{\mathrm{dis}}(\phi^{\dagger}))\geq \ord_{\lambda}(\calP(f^{\dagger})).
\end{equation*}
\end{theorem}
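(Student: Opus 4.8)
The plan is to bound both periods in terms of the $\calO_{\lambda}$-length of a single Galois-cohomological invariant of $\overline{\rho}_{\pi^{\circ}}$, namely the Bloch--Kato Selmer group of the adjoint module $\Ad^{0}\rmT_{\pi^{\circ}}$. On one side, a Flach system of Jacquet--Langlands type built from the quaternionic Hilbert--Blumenthal surface will bound this Selmer group from above by $\ord_{\lambda}(\calP_{\mathrm{dis}}(\phi^{\dagger}))$; on the other side, the Taylor--Wiles--Kisin method will evaluate it exactly as $\ord_{\lambda}(\calP(f^{\dagger}))$. The algebraic bridge is the weight-two identity $\Ad^{0}\rmT_{\pi^{\circ},n}=\Sym^{2}\rmT_{\pi^{\circ},n}(-1)=\rmM_{n}(-1)=\rmM_{n}^{\vee}(1)$ (which uses $\det\rho_{\pi^{\circ}}=\chi_{\mathrm{cyc}}$), so that the module $\rmM_{n}$ carried by the geometry and the adjoint module controlling deformation theory have Tate-dual Galois cohomology.

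\emph{Step 1 (the Flach system and the Euler bound).} Here the Flach system of Jacquet--Langlands type of \cite{Wang} is to be invoked and transported to the present geometry. Let $X$ be the quaternionic Hilbert--Blumenthal surface attached to a quaternion algebra over $\rmF$ split at both archimedean places and suitably ramified at finite places, chosen so that $X$ is proper and the $\pi^{\overline{\rmQ}}$-isotypic part of $\rmH^{2}(X_{\overline{\QQ}},\calO_{\lambda})$ realises $\rmM$ (up to twist). The Flach classes are produced by arithmetic level raising: for each prime $q$ in a suitable set of admissible auxiliary primes, the Tate conjecture for the special fiber of an integral model of $X$ at $q$ --- which by \cite{TX} supplies explicit Tate cycles coming from the components and the supersingular locus of that fiber --- manufactures a class $c_{\frakn,n}\in\rmH^{1}(\QQ,\rmM_{n})$ (for $\frakn$ the squarefree product of the chosen primes) that is unramified outside $\frakn l\rmN$, Bloch--Kato at $l$, of the type prescribed by Assumption \ref{intro-ass1}(3),(4) at $\Sigma^{+}\cup\Sigma^{-}$, and whose ramification at each $q\mid\frakn$ is pinned down by explicit Hecke data. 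The crucial reciprocity is that the bottom class $c_{1,n}$ (empty auxiliary set) is governed by the special fiber at a prime dividing $\rmN^{-}$, whose supersingular locus is a form of the Shimura set $\rmZ(\overline{\rmQ})$, and that pairing the Hirzebruch--Zagier cycle $\rmZ(\overline{\rmB})\hookrightarrow\rmZ(\overline{\rmQ})$ against $\phi^{\dagger}$ recovers the distinguished period, so that the relevant localisation of $c_{1,n}$ has $\lambda$-valuation $\ord_{\lambda}(\calP_{\mathrm{dis}}(\phi^{\dagger}))$ modulo $\lambda^{n}$. Feeding this system into the Euler system argument of Kolyvagin--Flach type --- Chebotarev to produce the auxiliary primes, global Poitou--Tate duality to convert local into global information, with Assumption \ref{intro-ass1}(1),(2) furnishing the large-image hypotheses and the hypothesis $\rmH^{1}(\Delta_{n},\rmM_{n})=0$ ensuring that a class in $\rmH^{1}(\QQ,\rmM_{n})$ is detected faithfully after restriction to $\rmG_{\QQ(\rmM_{n})}$ --- yields, for every $n$,
\[
\mathrm{length}_{\calO_{\lambda}}\,\mathrm{Sel}_{\calF}(\QQ,\rmM_{n}^{\vee}(1))\ \leq\ \ord_{\lambda}(\calP_{\mathrm{dis}}(\phi^{\dagger})),
\]
where $\calF$ is the Selmer structure Tate-dual to the local conditions carried by the $c_{\frakn,n}$.

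\emph{Steps 2 and 3 (identification and Taylor--Wiles).} Because $\phi^{\dagger}$ is a base change and $X$ is the quaternionic analogue of a Hilbert modular surface, the piece of $\rmH^{2}(X)$ carrying the Flach classes is $\rmM$ up to twist, and the local conditions defining $\calF$ match, under $\rmM_{n}^{\vee}(1)\cong\Ad^{0}\rmT_{\pi^{\circ},n}$, the Bloch--Kato conditions on $\Ad^{0}\rmT_{\pi^{\circ},n}$ that are minimal at $\Sigma^{+}$ and of the prescribed ramified, resp.\ mixed, type at $\Sigma^{-}_{\ram}$, resp.\ $\Sigma^{-}_{\mix}$ (Assumption \ref{intro-ass1}(3),(4)); hence $\mathrm{Sel}_{\calF}(\QQ,\rmM_{n}^{\vee}(1))\cong\mathrm{Sel}_{\mathrm{BK}}(\QQ,\Ad^{0}\rmT_{\pi^{\circ},n})$. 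Now Assumption \ref{intro-ass1}(1)--(4) are precisely the hypotheses under which the Taylor--Wiles--Kisin method applies to the minimal definite-quaternionic deformation problem for $\overline{\rho}_{\pi^{\circ}}$: one obtains a minimal isomorphism $R\cong\TT_{\gothm}$ with $\TT_{\gothm}$ a complete intersection, and Wiles' numerical criterion identifies $\mathrm{length}_{\calO_{\lambda}}\,\mathrm{Sel}_{\mathrm{BK}}(\QQ,\Ad^{0}\rmT_{\pi^{\circ}}\otimes\rmE_{\lambda}/\calO_{\lambda})$ with $\ord_{\lambda}(\eta_{f^{\dagger}})$, the $\lambda$-valuation of the congruence number $\eta_{f^{\dagger}}$ of $f^{\dagger}$, which under our hypotheses is computed by the definite quaternionic Petersson norm: $\ord_{\lambda}(\eta_{f^{\dagger}})=\ord_{\lambda}(\calP(f^{\dagger}))$. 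Choosing $n$ large enough that $\lambda^{n}$ annihilates all the finite groups in play and concatenating the displays,
\[
\ord_{\lambda}(\calP(f^{\dagger}))=\mathrm{length}_{\calO_{\lambda}}\,\mathrm{Sel}_{\mathrm{BK}}(\QQ,\Ad^{0})=\mathrm{length}_{\calO_{\lambda}}\,\mathrm{Sel}_{\calF}(\QQ,\rmM^{\vee}(1))\leq\ord_{\lambda}(\calP_{\mathrm{dis}}(\phi^{\dagger})),
\]
which is the asserted inequality (the case $\calP_{\mathrm{dis}}(\phi^{\dagger})=0$ being trivial).

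\emph{Expected main obstacle.} Everything downstream of Step 1 is, modulo a careful bookkeeping of local conditions and Tate twists, an assembly of established machinery (Poitou--Tate duality, Taylor--Wiles--Kisin, Wiles' numerical criterion, the congruence-module/Petersson-norm dictionary for definite quaternion algebras). The crux is Step 1: constructing the Flach system with exactly the prescribed ramification and, above all, proving the reciprocity law that reads the $\lambda$-adic size of the localised bottom class off the distinguished period. This demands a precise understanding of the integral model of the quaternionic Hilbert--Blumenthal surface, of the Hirzebruch--Zagier cycle it carries, and of the degeneracy structure of its special fibers at the primes dividing $\rmN^{-}$ --- exactly where the Tate conjecture of \cite{TX} becomes indispensable --- while controlling the increments $c_{\frakn,n}$ at the auxiliary primes $q\mid\frakn$ (the level-raising classes) is the technical heart of the matter, and the large-image and $\rmH^{1}(\Delta_{n},\rmM_{n})$-vanishing hypotheses are calibrated precisely to make that step go through.
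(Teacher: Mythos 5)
Your overall architecture is the paper's: a Flach class built from the Hirzebruch--Zagier cycle on a quaternionic Hilbert--Blumenthal surface, a reciprocity formula (via the Tate conjecture of \cite{TX}) identifying its singular residue paired against $\phi^{\dagger}$ with $\calP_{\mathrm{dis}}(\phi^{\dagger})$, an upper bound for the adjoint Selmer group by $\ord_{\lambda}(\calP_{\mathrm{dis}}(\phi^{\dagger}))$, and an $R=\TT$/congruence-number input producing $\ord_{\lambda}(\calP(f^{\dagger}))$ on the other side. However, several steps as you state them have genuine gaps. First, the reciprocity does not take place at a prime dividing $\rmN^{-}$: the surface is attached to $\rmQ=\rmB\otimes\rmF$ where $\rmB$ has discriminant $\rmN^{-}p$ for an \emph{auxiliary} prime $p$ inert in $\rmF$ and $n$-admissible (congruence conditions on $a_{p^{2}}$ modulo $\lambda^{n}$); the Flach element is the pair $(\theta_{\ast}\rmX(\rmB),p)$ and its singular residue is taken at $p$, where the supersingular locus is parametrized by $\rmZ(\overline{\rmQ})$ and receives $\rmZ(\overline{\rmB})$. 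Admissibility of $p$ is exactly what makes the Tate classes exhaust the Frobenius invariants integrally and makes $\rmH^{1}_{\mathrm{sin}}(\QQ_{p},\rmN_{n})$ free of rank one; at primes dividing $\rmN^{-}$, where the representation is ramified and the level is fixed, none of this is available, so locating the "bottom class" there would not work.

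Second, you posit a full Kolyvagin-type system of derivative classes $c_{\frakn,n}$ indexed by squarefree products of auxiliary primes with level-raising increments; no such classes are constructed, and they are not needed. The geometric input yields only depth-one classes $\kappa^{[p]}$, one per admissible prime, and the passage from these to a length bound is not "standard Euler-system machinery": it is the combination of the hypothesis $\rmH^{1}(\Delta_{n},\rmM_{n})=0$ (forcing classes locally trivial at $p$ to vanish globally) with an annihilation statement ($\varpi^{\nu}$ kills $\rmH^{1}_{f}(\QQ,\Ad^{0}(\calM_{\pi^{\circ}}))$) upgraded to a length estimate via a Bockstein-pairing argument. As written, your Step 1 both overclaims the construction and omits the annihilation-to-length step. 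Third, your Steps 2--3 assert $\mathrm{length}_{\calO_{\lambda}}\,\rmH^{1}_{f}(\QQ,\Ad^{0}(\calM_{\pi^{\circ}}))=\ord_{\lambda}(\calP(f^{\dagger}))$ from a \emph{minimal} Taylor--Wiles argument and a matching of local conditions. The deformation problem is not minimal: at primes in $\Sigma^{-}_{\mathrm{mix}}$ the Bloch--Kato condition does not coincide with the deformation-theoretic "new" condition, so the numerical criterion computes the length of the smaller Selmer group $\rmH^{1}_{\calS}$ (new conditions at $\Sigma^{-}_{\mathrm{mix}}$), via the non-minimal $\rmR_{\mix}\cong\TT_{\frakm}$ theorem of \cite{Khare}, \cite{Lun}, \cite{KO}, and this equals $\ord_{\lambda}(\eta_{f}(\rmN^{+},\rmN^{-}))=\ord_{\lambda}(\calP(f^{\dagger}))$. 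What one then needs is only the inclusion $\rmH^{1}_{\calS}\hookrightarrow\rmH^{1}_{f}$ (valid because the new local condition is trivial at those primes), which gives the one-sided inequality sufficient for the theorem; the equality you claim is neither needed nor established by the route you describe.
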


It should also be interesting to study the integrality question of the ratio between the distinguished period $\calP_{\mathrm{dis}}(f_{\rmF})$ for the base-change $f_{\rmF}$ of $f$ and the classical Petersson norm $\langle f, f\rangle$ of $f$. The $p$-adic reciprocity formuals proved in \cite{LLZ}, \cite{LSZ} should be more suitable for this purpose.  Our main result is clearly the compact version for this more classical ratio under the Jacquet--Langlands correspondence. We also suspect that when the $\lambda$-adic valuation of this ratio is positive, then the positive part may be related to the base-change congruence ideal as defined by Hida \cite{Hida}. We hope to return to this question in the near future. 

\subsection{Strategy of the proof} We now briefly outline the proof for the main integrality result in Theorem \ref{main-intro}. Let $\rmV_{\pi^{\circ}}$ be the representation space of $\rho_{\pi^{\circ}}$ and recall that we have fixed a lattice $\rmT_{\pi^{\circ}}$ of $\rmV_{\pi^{\circ}}$.  We define the divisible Galois module $\calM_{\pi^{\circ}}$ by
\begin{equation*}
0\rightarrow \rmT_{\pi^{\circ}}\rightarrow \rmV_{\pi^{\circ}}\rightarrow \calM_{\pi^{\circ}}\rightarrow 0.
\end{equation*}
The key step towards proving the integrality of the ratio of the distinguished period and the quaternionic period is to prove the following theorem bounding the Bloch--Kato Selmer group of $\mathrm{Ad}^{0}(\calM_{\pi^{\circ}})$ in terms of $\ord_{\lambda}(\calP_{\mathrm{dis}}(\phi^{\dagger}))$. 

\begin{theorem}\label{main-intro-1} 
Let $\nu=\mathrm{ord}_{\lambda}(\calP_{\mathrm{dis}}(\phi^{\dagger}))$ and $\eta=\varpi^{\nu}$. 
\begin{enumerate}
\item We assume that the residual Galois representation $\overline{\rho}_{\pi^{\circ}}$ is absolutely irreducible. 
\item We further assume that
\begin{equation*}
\rmH^{1}(\Delta_{n}, \rmM_{n})=0
\end{equation*} 
for every $n\geq 1$. 
\end{enumerate}
Then we have 
\begin{equation*}
\mathrm{length}_{\calO_{\lambda}}\phantom{.}\rmH^{1}_{f}(\QQ, \mathrm{Ad}^{0}(\calM_{\pi^{\circ}}))\leq \nu.
\end{equation*}
\end{theorem}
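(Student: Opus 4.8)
The plan is to deduce the bound from an Euler system of Flach--Jacquet--Langlands type attached to the quaternionic Hilbert--Blumenthal surface $\rmX$ associated with the group $\mathrm{Res}_{\rmF/\QQ}\rmQ^{\times}$, where $\rmQ$ is the quaternion algebra over $\rmF$ split at both archimedean places with the same finite ramification as $\overline{\rmB}$. First I would locate the Galois module $\mathrm{Ad}^{0}(\calM_{\pi^{\circ}})$ inside the geometry of $\rmX$. Set $\Pi=\mathrm{BC}_{\rmF}(\pi^{\circ})$ and localize the lattice $\rmH^{2}_{\mathrm{et}}(\rmX_{\overline{\QQ}},\calO_{\lambda})$ at the maximal ideal $\frakm$ of the Hecke algebra cut out by $\overline{\rho}_{\pi^{\circ}}$; tensor induction applied to $\rho_{\pi^{\circ}}|_{\rmG_{\rmF}}$ produces the Asai representation $\mathrm{As}(\rho_{\pi^{\circ}}|_{\rmG_{\rmF}})$, which --- since $\rho_{\pi^{\circ}}$ descends to $\rmG_{\QQ}$ --- contains $\mathrm{Sym}^{2}(\rho_{\pi^{\circ}})$ as a $\rmG_{\QQ}$-stable direct summand, so that after a Tate twist $\mathrm{Ad}^{0}(\rmT_{\pi^{\circ}})$ occurs as a $\rmG_{\QQ}$-stable direct summand of the $\Pi$-part of $\rmH^{2}_{\mathrm{et}}(\rmX_{\overline{\QQ}},\calO_{\lambda})_{\frakm}$ (the first assumption of the theorem, absolute irreducibility of $\overline{\rho}_{\pi^{\circ}}$, ensures the splitting is integral). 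Hence $\rmH^{1}_{f}(\QQ,\mathrm{Ad}^{0}(\calM_{\pi^{\circ}}))$ is a direct summand of the Bloch--Kato Selmer group of the corresponding divisible quotient of $\rmH^{2}_{\mathrm{et}}(\rmX_{\overline{\QQ}},\calO_{\lambda})_{\frakm}$, and it suffices to bound that Selmer group by $\nu$.

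The second ingredient is the Flach system, built as in the companion article \cite{Wang}: for each $n\geq 1$ and each $n$-admissible auxiliary prime $q$ (in particular $q\nmid \rmN l$, unramified in $\rmF$, satisfying the relevant level-raising congruence modulo $\lambda^{n}$) one obtains a class $\kappa_{q}\in\rmH^{1}(\QQ,\mathrm{Ad}^{0}(\rmT_{\pi^{\circ},n})(1))$ as the image, under the étale regulator (Abel--Jacobi) map, of a class in the motivic cohomology $\rmH^{3}_{\mathcal{M}}$ of the quaternionic Hilbert--Blumenthal surface (with auxiliary structure at $q$) supported on Hirzebruch--Zagier curves; the Tate conjecture for the special fiber of that surface proved in \cite{TX} is what identifies the relevant vanishing-cycle data with a space of functions on the definite Shimura set $\rmZ(\overline{\rmQ})$, inside which the embedded Shimura set $\rmZ(\overline{\rmB})$ supplies the cycle. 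The three properties I would extract are: (a) $\kappa_{q}$ is unramified outside $S\cup\{q\}$, with $S$ the set of primes dividing $\rmN l$ together with those ramified in $\rmF$; (b) for each $v\in S$ the localization $\mathrm{loc}_{v}(\kappa_{q})$ lies in the orthogonal complement, under local Tate duality, of the local condition defining $\rmH^{1}_{f}$ --- forced by the geometric (motivic) origin of $\kappa_{q}$, with the integrality of these local conditions modulo $\lambda^{n}$ the one delicate point; and (c) the reciprocity law: the singular residue $\partial_{q}(\kappa_{q})$ generates the cyclic module $\rmH^{1}_{\mathrm{sing}}(\QQ_{q},\mathrm{Ad}^{0}(\rmT_{\pi^{\circ},n})(1))$ up to multiplication by $\varpi^{\min(\nu,n)}$, because projecting the Hirzebruch--Zagier cycle class to the $\phi^{\dagger}$-component evaluates $\phi^{\dagger}$ along $\rmZ(\overline{\rmB})$ and sums, yielding exactly $\calP_{\mathrm{dis}}(\phi^{\dagger})$.

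Granting these, the bound follows from the standard Euler system descent (Kolyvagin--Flach--Weston; compare \cite{Wang}, \cite{Weston3}). Fix $n$ and take $s\in\rmH^{1}_{f}(\QQ,\mathrm{Ad}^{0}(\calM_{\pi^{\circ}}))[\lambda^{n}]$; by absolute irreducibility and self-duality of $\mathrm{Ad}^{0}$ we regard $s$ as a class in $\rmH^{1}_{f}(\QQ,\mathrm{Ad}^{0}(\rmT_{\pi^{\circ},n}))$, whose Cartier dual is $\mathrm{Ad}^{0}(\rmT_{\pi^{\circ},n})(1)$, so $\kappa_{q}\cup s\in\rmH^{2}(\QQ,\calO_{\lambda}/\lambda^{n}(1))$. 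Poitou--Tate global reciprocity together with (a) and (b) kills every local term save the one at $q$, whence $\langle\partial_{q}(\kappa_{q}),\mathrm{loc}_{q}(s)\rangle_{q}=0$; since $s$ is unramified at $q$, property (c) and perfectness of the local pairing $\rmH^{1}_{\mathrm{sing}}(\QQ_{q},-)\times\rmH^{1}_{\mathrm{fin}}(\QQ_{q},-)\to\calO_{\lambda}/\lambda^{n}$ force $\varpi^{\min(\nu,n)}\mathrm{loc}_{q}(s)=0$. The second assumption, $\rmH^{1}(\Delta_{n},\rmM_{n})=0$, makes the restriction map $\rmH^{1}(\QQ,\mathrm{Ad}^{0}(\rmT_{\pi^{\circ},n}))\hookrightarrow\Hom_{\Delta_{n}}(\rmG_{\QQ(\rmM_{n})},\mathrm{Ad}^{0}(\rmT_{\pi^{\circ},n}))$ injective (after the harmless twist relating $\mathrm{Sym}^{2}$ and $\mathrm{Ad}^{0}$), so a Chebotarev argument yields, for any prescribed nonzero Selmer class, $n$-admissible primes $q$ whose localizations detect it; running the inductive descent over chains of such primes upgrades the pointwise vanishing to $\mathrm{length}_{\calO_{\lambda}}\rmH^{1}_{f}(\QQ,\mathrm{Ad}^{0}(\calM_{\pi^{\circ}})[\lambda^{n}])\leq\min(\nu,n)$. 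Letting $n\to\infty$ --- the Selmer group is cofinitely generated over $\calO_{\lambda}$, so a uniform bound forces corank zero and length $\leq\nu$ --- gives the theorem.

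I expect property (c) to be the principal obstacle: one must compute the singular residue of the Abel--Jacobi class at $q$ via the weight spectral sequence of a semistable model at $q$ and match it, through the Tate conjecture of \cite{TX} and the Hirzebruch--Zagier embedding $\rmZ(\overline{\rmB})\hookrightarrow\rmZ(\overline{\rmQ})$, precisely with the distinguished period rather than merely up to an uncontrolled $\lambda$-adic unit. A secondary difficulty is the integral verification of the dual local conditions in (b) at the primes dividing $\rmN$.
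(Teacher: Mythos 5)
Up to the annihilation statement your outline tracks the paper's argument closely: the Asai decomposition realizing $\Sym^{2}(\rmT_{\pi^{\circ}})$ in $\rmH^{2}_{\pi^{\rmQ}}$ of the quaternionic surface, the Flach class obtained by applying the Abel--Jacobi map to the pair (Hirzebruch--Zagier curve, constant function $p$), the unramifiedness and dual local conditions away from the auxiliary prime, the identification of $\rmH^{1}_{\sin}(\QQ_{p},\rmN_{n})$ with functions on $\rmZ(\overline{\rmQ})$ via the Tate conjecture of \cite{TX}, the reciprocity formula $(\partial_{p}(\kappa^{[p]}),\phi^{\dagger})=\calP_{\mathrm{dis}}(\phi^{\dagger})$, and the global-duality plus Chebotarev step (using $\rmH^{1}(\Delta_{n},\rmM_{n})=0$ and irreducibility of $\rmM_{n}$) that forces $\eta\,\loc_{p}(s)=0$ and hence $\eta\,s=0$. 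All of this yields exactly what the paper's Lemmas on the Flach system yield: the Selmer group is \emph{annihilated} by $\varpi^{\nu}$.

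The gap is in your last step. Annihilation by $\varpi^{\nu}$ is an exponent bound, not a length bound: a group of the form $(\calO_{\lambda}/\lambda^{\nu})^{r}$ with $r>1$ is killed by $\varpi^{\nu}$ but has length $r\nu$, so "a uniform bound forces corank zero and length $\leq\nu$" does not follow. The device you invoke to close this --- "running the inductive descent over chains of such primes" --- is precisely what a Flach system cannot do: the geometric construction furnishes classes indexed by a \emph{single} auxiliary prime (a depth-one geometric Euler system in the sense of \cite{Weston3}), with no derived classes for products of admissible primes, so the Kolyvagin-style induction over chains is unavailable. The paper closes the gap differently: having first used the annihilation to identify $\rmH^{1}_{f}(\QQ,\mathrm{Ad}^{0}(\calM_{\pi^{\circ}}))$ with $\rmH^{1}_{f}(\QQ,\rmM_{\nu})$, it introduces the Bockstein pairing attached to $0\rightarrow\rmN_{\nu}\rightarrow\rmN_{2\nu}\rightarrow\rmN_{\nu}\rightarrow0$ and shows, using a lift $\kappa^{[p]}_{2\nu}$ of the Flach class and the fact that $\frac{1}{\eta}\partial_{p}(\kappa^{[p]}_{2\nu})$ generates the rank-one module $\rmH^{1}_{\sin}(\QQ_{p},\rmN_{\nu})$, that the pairing is right non-degenerate on the cyclic submodule $\langle\kappa^{[p]}_{\nu}\rangle$; this embeds $\rmH^{1}_{f}(\QQ,\mathrm{Ad}^{0}(\calM_{\pi^{\circ}}))$ into $\Hom(\langle\kappa^{[p]}_{\nu}\rangle,\QQ_{l}/\ZZ_{l})$, a module of length at most $\nu$. (Equivalently, one may observe that your own Chebotarev step gives $\rmH^{1}_{\{p\}}(\QQ,\rmM_{\nu})=0$, so $\loc_{p}$ is injective on $\rmH^{1}_{f}(\QQ,\rmM_{\nu})$ with image in $\rmH^{1}_{\mathrm{fin}}(\QQ_{p},\rmM_{\nu})$, which is free of rank one over $\calO_{\lambda}/\lambda^{\nu}$ by admissibility of $p$.) Without some such single-prime argument converting annihilation into a bound on length, your proof only establishes the weaker statement that $\varpi^{\nu}$ kills the Selmer group.
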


The proof of this theorem relies on the construction of the so-called Flach system of Jacquet--Langlands type on certain Shimura surface which we now describe in more details. Let $p$ be a prime which is \emph{inert} in $\rmF$.  We can obtain another quaternion algebra $\rmB$ by switching the local invariants of $\overline{\rmB}$ at $p$ and $\infty$. Then $\rmQ=\rmB\otimes\rmF$ is a totally indefinite quaternion algebra over $\rmF$ obtained from $\overline{\rmQ}$ by switching the invariants at the two archimedean places of $\rmF$. Note that $\pi^{\overline{\rmQ}}$ admits a Jacquet--Langlands transfer $\pi^{\rmQ}$ as a representation of $\rmG(\rmQ)$ which is the algebraic group given by the Weil restriction of $\rmQ^{\times}$ from $\rmF$ to $\QQ$. Let $\rho_{\pi^{\rmQ}}: \rmG_{\rmF}\rightarrow\GL_{2}(\rmE_{\lambda})= \GL( \rmV_{\pi^{\rmQ}})$ be the Galois representation of $\rmG_{\rmF}$ attached to $\pi^{\rmQ}$. Then it is well-known that the Asai representation $\mathrm{As}(\mathrm{\rho_{\pi^{\rmQ}}})=\mathrm{As}(\rmV_{\pi^{\rmQ}})$ can be realized on the middle degree cohomology of certain Shimura surface $\rmX(\rmQ)\otimes\QQ$ which we will refer to as the {\em quaternionic Hilbert--Blumenthal surface}. Roughly speaking these Shimura surfaces defines a moduli problem $\rmX(\rmQ)$ which classifies abelian fourfold with quaternionic multiplication by  $\rmQ$ and additional structures. The integral cohomology of $\rmX(\rmQ)\otimes\QQ$ with coefficient in $\calO_{\lambda}$ defines a natural lattice $\rmT_{\pi^{\rmQ}}$ in $\rmV_{\pi^{\rmQ}}$ and we define the divisible Galois module $\calM_{\pi^{\rmQ}}$ using the exact sequence
\begin{equation*}
0\rightarrow \rmT_{\pi^{\rmQ}}\rightarrow \rmV_{\pi^{\rmQ}}\rightarrow \calM_{\pi^{\rmQ}}\rightarrow0.
\end{equation*}
In our set-up, the Asai representation $\mathrm{As}(\calM_{\pi^{\rmQ}})(-1)$ splits into
\begin{equation*}
\mathrm{As}(\calM_{\pi^{\rmQ}})(-1)=\mathrm{Ad}^{0}(\calM_{\pi^{\circ}})\oplus\rmE_{\lambda}/\calO_{\lambda}(\omega_{\rmF/\QQ})
\end{equation*} 
where $\omega_{\rmF/\QQ}$ is the quadratic character associated to $\rmF$. Since $\As(\rmT_{\pi^{\rmQ}})$ is realized in the $\pi^{\rmQ}$-isotopic part $\rmH^{2}_{\pi^{\rmQ}}(\rmX(\rmQ)\otimes\overline{\QQ}, \calO_{\lambda}(2))$ of the middle degree cohomology of $\rmX(\rmQ)$ and $\mathrm{Ad}^{0}(\calM_{\pi^{\circ}})(1)$ is the Kummer dual of $\mathrm{Ad}^{0}(\calM_{\pi^{\circ}})$, we are naturally led to construct elements in the Galois cohomology group $\rmH^{1}(\QQ, \rmH^{2}_{\pi^{\rmQ}}(\rmX(\rmQ)\otimes\overline{\QQ}, \calO_{\lambda}(2)))$. For this purpose, we consider the motivic cohomology group $\rmH^{3}_{\calM}(\rmX(\rmQ)\otimes\QQ, \ZZ(2)))$ for the surface $\rmX(\rmQ)\otimes\QQ$ whose elements consist of pairs $(\rmZ, f)$ where $\rmZ$ is a curve on $\rmX(\rmQ)\otimes\QQ$ and $f$ is a rational function on $\rmX(\rmQ)\otimes\QQ$ with trivial Weil divisors. Let $\rmX(\rmB)\otimes\QQ$ be the Shimura curve associated to the indefinite quaternion algebra $\rmB$, then we have the quaternionic Hirzebruch--Zagier morphism
\begin{equation*}
\theta: \rmX(\rmB)\otimes\QQ\rightarrow \rmX(\rmQ)\otimes\QQ
\end{equation*}
which can even be defined integrally. Then we define the {\em Flach element} by the pair
\begin{equation*}
\Theta^{[p]}=(\theta_{\ast}\rmX(\rmB)\otimes\QQ, p).
\end{equation*}
The construction of this element is inspired by that of \cite{Flach} which is used to bound the symmetric square Selmer group for an elliptic curve. However, for our purpose, we do not need to include the Siegel modular unit in our definition (which is also not available for us as our cycle class is given by a compact Shimura curve). On the other hand, the original construction of the Flach element has applications to Iwasawa theory. In a closely related setting, this is carried out in \cite{LLZ}. Note that they consider the case when the Hilbert modular form is not arising from base change and hence our result is of complimentary nature to their work. There is an Abel--Jacobi map 
\begin{equation*}
\mathrm{AJ}_{\pi^{\rmQ}}:  \rmH^{3}_{\calM}(\rmX(\rmQ)\otimes\QQ, \ZZ(2))\rightarrow \rmH^{1}(\QQ, \rmH^{2}_{\pi^{\rmQ}}(\rmX(\rmQ)\otimes\overline{\QQ}, \calO_{\lambda}(2)))
\end{equation*}
defined using the Chern character map from the $K$-theory to \'etale cohomology. The class $\kappa^{[p]}$ defined by $\mathrm{AJ}_{\pi^{\rmQ}}(\Theta^{[p]})$ is called the {\em Flach class} in our setting. The local ramification behaviour of the class $\kappa^{[p]}$ will be analyzed. In particular at $p$, we are concerned with singular quotient 
\begin{equation*}
\partial_{p}(\kappa^{[p]})\in \rmH^{1}_{\sin}(\QQ_{p}, \rmH^{2}_{\pi^{\rmQ}}(\rmX(\rmQ)\otimes\overline{\QQ}_{p}, \calO_{\lambda}(2))) 
\end{equation*}
of $\kappa^{[p]}$. Note that $\rmH^{1}_{\sin}(\QQ_{p}, \rmH^{2}_{\pi^{\rmQ}}(\rmX(\rmQ)\otimes\overline{\QQ}_{p}, \calO_{\lambda}(2))$ is isomorphic to $\rmH^{2}_{\pi^{\rmQ}}(\rmX(\rmQ)\otimes\overline{\FF}_{p}, \calO_{\lambda}(1))^{\rmG_{\FF_{p}}}$. If we choose $p$ carefully, then $\rmH^{2}_{\pi^{\rmQ}}(\rmX(\rmQ)\otimes\overline{\FF}_{p}, \calO_{\lambda}(1))^{\rmG_{\FF_{p}}}$  is isomorphic to $\calO_{\lambda}[\rmZ(\overline{\rmQ})][\pi^{\overline{\rmQ}}]$ the $\pi^{\overline{\rmQ}}$-isotypic component of the space of the $\calO_{\lambda}$-valued functions on $\rmZ(\overline{\rmQ})$  by the Tate conjecture for $\rmX(\rmQ)\otimes\overline{\FF}_{p}$ proved in \cite{TX}. In this case, we can view $\partial_{p}(\kappa^{[p]})$ as an element of $\calO_{\lambda}[\rmZ(\overline{\rmQ})][\pi^{\overline{\rmQ}}]$. There is a natural pairing $(\cdot,\cdot)$ on this space $\calO_{\lambda}[\rmZ(\overline{\rmQ})]$. Then we can prove the following key {\em reciprocity formula}
\begin{equation*}
(\partial_{p}(\kappa^{[p]}), \phi^{\dagger})=\calP_{\mathrm{dis}}(\phi^{\dagger}).
\end{equation*}
Then an Euler system argument allows us to show that  $\eta=\varpi^{\ord_{\lambda}(\calP_{\mathrm{dis}}(\phi^{\dagger}))}$ annihilates the Selmer group $\rmH^{1}_{f}(\QQ, \mathrm{Ad}^{0}(\calM_{\pi^{\circ}}))$ under some technical assumptions. Using this annihilation result, it can be shown that $\mathrm{leng}_{\calO_{\lambda}}\phantom{.}\rmH^{1}_{f}(\QQ, \mathrm{Ad}^{0}(\calM_{\pi^{\circ}}))$ admits an upper-bound given by $\nu=\ord_{\lambda}(\calP_{\mathrm{dis}}(\phi^{\dagger}))$.

On the other hand, the Selmer group of the adjoint representation $\mathrm{Ad}^{0}(\calM_{\pi^{\circ}})$ is closely related to the deformation theory of the residual representation $\overline{\rho}_{\pi^{\circ}}$. In fact, as a by-product of the so-called $\rmR=\rmT$ theorem proved using the Taylor--Wiles method, it can be shown that $\rmH^{1}_{f}(\QQ, \mathrm{Ad}^{0}(\calM_{\pi^{\circ}}))$ has a subspace $\rmH^{1}_{\calS}(\QQ, \mathrm{Ad}^{0}(\calM_{\pi^{\circ}}))$ whose length is given by the $\lambda$-valuation of certain congruence number $\eta(\rmN^{+},\rmN^{-})$ that detects congruences between the modular form $f$ and other modular forms in $\rmS_{2}(\Gamma_{0}(\rmN))$ which are new at primes dividing $\rmN^{-}$. Finally, it is well--known that the $\lambda$-valuation of this congruence number is exactly that of the Petersson norm of $f^{\dagger}$ under our assumptions. This finally proves that $\ord_{\lambda}(\calP_{\mathrm{dis}}(\phi^{\dagger}))\geq \ord_{\lambda}(\calP(f^{\dagger}))$.

\subsection{Notations and conventions} We will use common notations and conventions in algebraic number theory and algebraic geometry. The cohomology theories appear in this article will be understood as the \'{e}tale cohomologies. For a field $\rmK$, we denote by $\rmK^{\ac}$ the separable closure of $\rmK$ and denote by $\rmG_{\rmK}:=\Gal(\rmK^{\ac}/\rmK)$ the Galois group of $\rmK$. For a finite place $v$ of $\rmK$, $\mathrm{Frob}_{v}$ is the arithmetic Frobenius at $v$.

We let $\Adel_{\rmF}$ be the ring of ad\`{e}les over a number field $\rmF$ and $\Adel^{\infty}_{\rmF}$ be the subring of finite ad\`{e}les.  For a finite place $v$ of $\rmF$, $\Adel^{\infty, v}_{\rmF}$ is the prime-to-$v$ part of  $\Adel^{\infty}_{\rmF}$.  When $\rmF=\QQ$, then we will omit the subscript $\rmF$ from these notations.

When $\rmK$ is a local field, we denote by $\calO_{\rmK}$ its valuation ring  and by $k$ its residue field. We let $\rmI_{\rmK}$ be the inertia subgroup of $\rmG_{\rmK}$. For a $\rmG_{\rmK}$-module $\rmM$, 
\begin{enumerate}
\item the finite part $\rmH^{1}_{\mathrm{fin}}(\rmK, \rmM)$ of $\rmH^{1}(\rmK, \rmM)$ is defined to be $\rmH^{1}(k, \rmM^{\rmI_{\rmK}})$;
\item the singular part $\rmH^{1}_{\mathrm{sin}}(\rmK, \rmM)$ of $\rmH^{1}(\rmK, \rmM)$ is defined to be the quotient of $\rmH^{1}(\rmK, \rmM)$ by the image of $\rmH^{1}_{\mathrm{fin}}(\rmK, \rmM)$ in $\rmH^{1}(\rmK, \rmM)$ via inflation;
\item let $x$ be an element of $\rmH^{1}(\rmK, \rmM)$, we call the image of $x$ in $\rmH^{1}_{\mathrm{sin}}(\rmK, \rmM)$ the singular residue of $x$ written as $\partial_{p}(x)$. 
\end{enumerate}
Let $\rmK$ be a number field and let $\rmK_{v}$ be the completion of $\rmK$ at a place $v$. Suppose $x\in\rmH^{1}(\rmK,\rmM)$, then we will write $\loc_{v}(x)$ the image of $x$ under the restriction map 
\begin{equation*}
\loc_{v}:\rmH^{1}(\rmK,\rmM)\rightarrow \rmH^{1}(\rmK_{v}, \rmM).
\end{equation*}
We will denote by $\partial_{v}(x)$ the image of $x$ in $ \rmH^{1}_{\mathrm{sin}}(\rmK_{v}, \rmM)$ under the composite map of
\begin{equation*}
\loc_{v}:\rmH^{1}(\rmK,\rmM)\rightarrow \rmH^{1}(\rmK_{v}, \rmM)
\end{equation*}
and the natural map $\rmH^{1}(\rmK_{v}, \rmM)\rightarrow \rmH^{1}_{\mathrm{sin}}(\rmK_{v}, \rmM)$.

\subsection*{Acknowledgements} We would like to thank Ming-Lun Hsieh for introducing the notion of distinguished representation to the author and pointing out useful references. We would like to thank Matteo Tamiozzo for useful email communications.

\section{Quaternionic Hilbert--Blumenthal surfaces}
\subsection{A quaternionic Shimura surface}
Let $\rmF$ be a real quadratic field of discriminant $\rmD_{\rmF}$. Let $\rmQ=\rmB\otimes \rmF$ for an indefinite quaternion algebra $\rmB$ with discriminant $\rmN^{-}p$ which is square free consisting of even number of prime factors. Suppose that $p$ is a prime inert in $\rmF$.  Given this $\rmQ$, let $\rmG(\rmQ)$ be the algebraic group over $\QQ$ defined by the Weil restriction of $\rmQ^{\times}$ from $\rmF$ to $\QQ$. Fix an open compact subgroup $\rmK$ of $\rmG(\rmQ)(\mathbf{A}^{\infty})$
we can associate to it a Shimura variety $\mathrm{Sh}_{\rmK}(\rmQ)$ over $\QQ$ whose complex points are given by
\begin{equation*}
\mathrm{Sh}_{\rmK}(\rmQ)(\CC)=\rmG(\rmQ)(\QQ)\backslash\calD\times \rmG(\rmQ)(\mathbf{A}^{\infty})/\rmK
\end{equation*}
where we denote by $\calD=\calH^{\pm 2}$ product of two copies of the upper and lower half planes.

Let $\widetilde{\rmG}(\rmQ)$ be the algebraic group  over $\QQ$ whose $\rmR$-points are given by
\begin{equation*}
\widetilde{\rmG}(\rmQ)(\rmR)=\{x\in (\rmQ\otimes \rmR)^{\times}: \rmN^{\circ}(x)\in\rmR^{\times}\}
\end{equation*}
for any $\QQ$-algebra $\rmR$. Fix an open compact subgroup $\widetilde{\rmK}$ of $\widetilde{\rmG}(\rmQ)(\mathbf{A}^{\infty})$,
we can associate to it a Shimura variety ${\mathrm{Sh}}_{\widetilde{\rmK}}(\rmQ)$ which represents the following moduli problem over $\QQ$. It associates to a scheme $\rmS\in \mathrm{Sch}/\QQ$ the isomorphism classes of triples $(\rmA, \iota, \overline{\eta})$ up to isogeny where
\begin{enumerate}
\item $\rmA$ is an abelian scheme over $\rmS$ of relative dimension $4$;
\item  $\iota: \rmB\otimes \rmF\rightarrow \End^{0}(\rmA)$ is a homomorphism such that
\begin{equation*}
\iota(b\otimes a)^{\ast}=\iota(b^{\ast}\otimes a)
\end{equation*}
where the first $(\cdot)^{\ast}$ means the Rosati involution of $\End^{0}(\rmA)=\End_{\rmS}(\rmA)\otimes \QQ$ while the second $(\cdot)^{\ast}$ means the main involution on $\rmB$;
\item $\overline{\eta}$ is a $\rmK$-equivalence class of $\rmB\otimes \rmF$-equivariant isomorphisms 
\begin{equation*}
\eta: \widehat{\rmV}(\rmA)\xrightarrow{\sim} \rmB\otimes\rmF(\mathbf{A}^{\infty})
\end{equation*} 
where $\widehat{\rmV}(\rmA)=\prod_{v}\rmT_{v}(\rmA)\otimes\QQ$ and which preserves Weil-pairing on the left-hand-side and the reduced trace pairing on the right-hand-side up to a scalar in $\mathbf{A}^{\infty\times}$.
\end{enumerate}
We will refer to \cite{Kottwitz} for the precise definitions of the notations used here. We require that the following Kottwitz conditions are satisfied 
\begin{equation*}
\det(\iota(b\otimes a);\mathrm{Lie}(\rmA))=\rmN^{\circ}(b)\rmN^{2}_{\rmF/\QQ}(a)
\end{equation*}
for $b\otimes a\in \rmB\otimes \rmF$. It is known that the polarization datum that are usually included in the above moduli problem can be omitted, see \cite[Lemma 3.8]{Zink}, \cite[Remark 2.5]{LT}. 
\begin{remark}
It is clear that the above moduli problem is the same as the $\GSpin$-type Shimura variety defined by Kulda--Raopoport in \cite[\S 1]{KR1} via Morita equivalence.
\end{remark}

For $\widetilde{\rmK}$ sufficiently small, this moduli problem is representable by a quasi-projective smooth scheme over $\QQ$. Note that when $\rmB\otimes\rmF=\rmM_{2}(\rmF)$, then we recover the classical Hilbert--Blumenthal surface. As before, the $\CC$-points of $\mathrm{Sh}_{\widetilde{\rmK}}(\rmQ)(\CC)$ can be described by the double coset space
\begin{equation*}
\mathrm{Sh}_{\widetilde{\rmK}}(\rmQ)(\CC)=\widetilde{\rmG}(\rmQ)(\QQ)\backslash\calD\times \widetilde{\rmG}(\rmQ)(\mathbf{A}^{\infty})/\widetilde{\rmK}.
\end{equation*}
When $\widetilde{\rmK}$ is the restriction of $\rmK$, there is a canonical map $\mathrm{Sh}_{\widetilde{\rmK}}(\rmQ)\rightarrow \mathrm{Sh}_{\rmK}(\rmQ)$ extending the natural map
\begin{equation*}
\widetilde{\rmG}(\rmQ)(\QQ)\backslash\calD\times \widetilde{\rmG}(\rmQ)(\mathbf{A}^{\infty})/\widetilde{\rmK}\rightarrow \rmG(\rmQ)(\QQ)\backslash\calD\times \rmG(\rmQ)(\mathbf{A}^{\infty})/\rmK
\end{equation*}
on $\CC$-points. The PEL-type Shimura surface $\mathrm{Sh}_{\widetilde{\rmK}}({\rmQ})$ will facilitate the study of the geometry of 
$\mathrm{Sh}_{\rmK}({\rmQ})$ whose cohomology carries the automorphic representation of interest. We will refer to $\mathrm{Sh}_{\rmK}({\rmQ})$ as the {\em quaternionic Hilbert--Blumenthal surface} associated to $\rmQ$ or simply as the \emph{quaternionic Shimura surface}. In the case when $\rmQ=\rmM_{2}(\rmF)$, $\mathrm{Sh}_{\rmK}({\rmQ})$ agrees with the classical Hilbert--Blumenthal surface. 

\subsection{Integral model for the quaternionnic Shimura surface} 
Let $\calO_{\rmB}\subset \rmB$ be a maximal order stable under the involution $\ast$ and we consider the order $\calO_{\rmQ}=\calO_{\rmB}\otimes \calO_{\rmF}$ in $\rmB\otimes\rmF$. Let $p$ be a prime which is inert in $\rmF$ and divides the discriminant of $\rmB$, $\widetilde{\rmK}^{p}\subset \widetilde{\rmG}(\rmQ)(\mathbf{A}^{\infty, p})$ be a compact open subgroup, we can define a moduli problem over $\ZZ_{(p)}$ which associates to each $\rmS\in\mathrm{Sch}/\ZZ_{(p)}$ the set of isomorphism classes of triples $(\rmA,\iota, \overline{\eta}^{p})$ where 
\begin{enumerate}
\item $\rmA$ is an abelian scheme over $\rmS$ of relative dimension $4$ up to prime to $p$-isogeny;
\item $\iota: \calO_{\rmB}\otimes\calO_{\rmF}\rightarrow \End(\rmA)\otimes\ZZ_{(p)}$ is a homomorphism such that
\begin{equation*}
\iota(b\otimes a)^{\ast}=\iota(b^{\ast}\otimes a)
\end{equation*}
where the first $(\cdot)^{\ast}$ means the Rosati involution of $\End_{\rmS}(\rmA)\otimes \ZZ_{(p)}$ while the second $(\cdot)^{\ast}$ means the main involution on $\rmB$.
\item $\overline{\eta}^{p}$ is a $\rmK^{p}$-equivalence class of $\calO_{\rmB}^{?}\otimes\calO_{\rmF}$-equivariant isomorphisms 
\begin{equation*}
\eta^{p}: \widehat{\rmV}^{p}(\rmA)\xrightarrow{\sim} \calO_{\rmB}\otimes\calO_{\rmF}(\mathbf{A}^{\infty,p})
\end{equation*} 
where $\widehat{\rmV}^{p}(\rmA)=\prod_{v\neq p}\rmT_{v}(\rmA)\otimes\QQ$ and $\eta^{p}$ preserves Weil-pairing on the left-hand-side and the reduced trace pairing on the right-hand-side up to a scalar in $\mathbf{A}^{\infty, p\times}$;
\end{enumerate}
We will refer to \cite{Kottwitz} for the precise definitions of the notations used here. We require that the following Kottwitz conditions are satisfied 
\begin{equation*}
\det(\iota(b\otimes a);\mathrm{Lie}(\rmA))=\rmN^{\circ}(b)\rmN^{2}_{\rmF/\QQ}(a)
\end{equation*}
for $b\otimes a\in \calO_{\rmB}\otimes \calO_{\rmF}$ which is understood as an identity of polynomial functions with coefficient in $\calO_{\rmS}$ as in \cite{Kottwitz}.  It follows from  \cite[\S 5]{Kottwitz} that the this moduli problem is representable by a smooth quasi-projective scheme $\widetilde{\rmX}_{\widetilde{\rmK}}(\rmQ)$ over $\ZZ_{(p)}$. Let $\widetilde{\rmK}=\widetilde{\rmK}_{p}\tilde{\rmK}^{p}$ where $\widetilde{\rmK}_{p}$ is the intersection of $(\calO_{\rmQ}\otimes\ZZ_{p})^{\times}$ with $\widetilde{\rmG}(\rmQ)(\QQ_{p})$. Then the generic fiber $\widetilde{\rmX}_{\widetilde{\rmK}}(\rmQ)\otimes \QQ$ is given by ${\mathrm{Sh}}_{\widetilde{\rmK}}(\rmQ)$ defined in the previous subsection. The canonical map ${\mathrm{Sh}}_{\widetilde{\rmK}}(\rmQ)\rightarrow \mathrm{Sh}_{\rmK}(\rmQ)$ extends to the integral model $\rmX_{\rmK}(\rmQ)$ of $\mathrm{Sh}_{\rmK}(\rmQ)$ and gives rise to a finite map 
\begin{equation}\label{finite-map}
\widetilde{\rmX}_{\widetilde{\rmK}}(\rmQ)\longrightarrow {\rmX}_{\rmK}(\rmQ)
\end{equation}
between schemes over $\ZZ_{(p)}$. 

We will now fix a definite choice of the level structure for later purpose. We will do this by introducing a slight variant of the above moduli problem. Let $\rmN^{+}$ be an integer that coprime to $\rmN^{-}p$ and we put $\rmN=\rmN^{+}\rmN^{-}$. Let $d\geq 4$ be another integer. Then we consider the moduli problem over $\ZZ[1/\rmN\rmD_{\rmF}]$ that assigns for each $\rmS\in\mathrm{Sch}/\ZZ[1/\rmN\rmD_{\rmF}]$ the set of isomorphism classes of $4$-tuples $(\rmA, \iota, \rmC_{\rmN^{+}}, \alpha_{d})$ where
\begin{enumerate}
\item $\rmA$ is an abelian scheme over $\rmS$ of relative dimension $4$;
\item $\iota: \calO_{\rmB}\otimes\calO_{\rmF}\rightarrow \End(\rmA)$ is a homomorphism such that
\begin{equation*}
\iota(b\otimes a)^{\ast}=\iota(b^{\ast}\otimes a)
\end{equation*}
where the first $(\cdot)^{\ast}$ means the Rosati involution of $\End_{\rmS}(\rmA)$ while the second $(\cdot)^{\ast}$ means the main involution on $\rmB$;
\item $\rmC_{\rmN^{+}}$ is an $\calO_{\rmB}\otimes\calO_{\rmF}$-stable flat subgroup of $\rmA[\frakn^{+}]$ for $\frakn^{+}=\rmN^{+}\calO_{\rmF}$ such that at every geometric point, it is isomorphic to $(\calO_{\rmF}/\frakn^{+})^{2}$;
\item $\alpha_{d}: (\calO_{\rmF}/d)_{\rmS}^{2}\hookrightarrow \rmA[\frakd]$ is an $\calO_{\rmB}\otimes\calO_{\rmF}$-equivariant injection of group schemes over $\rmS$ for the ideal $\frakd=d\calO_{\rmF}$.
\end{enumerate}
This moduli problem is representable by a scheme $\widetilde{\rmX}_{\rmN^{+}, d}(\rmQ)$. The corresponding integral model for the Shimura variety associated to the group $\rmG(\rmQ)$ will be denoted by $\rmX_{\rmN^{+}, d}(\rmQ)$  and there is a finite morphism
\begin{equation*}
\widetilde{\rmX}_{\rmN^{+}, d}(\rmQ)\rightarrow \rmX_{\rmN^{+}, d}(\rmQ)
\end{equation*}
as in \eqref{finite-map}. Since we will not need the precise form of the level structure, we will refer the reader to \cite[Example 2.12]{LT} for the precise description of the open compact subgroup $\rmK_{\rmN^{+}, d}$ defining the level structure. When there is no danger of confusion, we will simply write $\rmX(\rmQ)$ for the scheme $\rmX_{\rmN^{+}, d}(\rmQ)$.

\subsection{Supersingular locus of the quaternionic Shimura surface}
Let $\FF$ be a fixed algebraic closure of $\FF_{p}$. We set $\rmD=\rmB\otimes \QQ_{p}$ to be the quaternionic division algebra over $\QQ_{p}$. Let $\calL$ be an isocrystal of height $4$ over $\FF$ with an action $\iota: \rmD\otimes\QQ_{p^{2}}\rightarrow \End(\calL)$ of $\rmD\otimes\QQ_{p^{2}}$ on $\calL$. Since we always have $\rmD\otimes\QQ_{p^{2}}=\rmM_{2}(\QQ_{p^{2}})$, we have $\calL=\calN^{2}$ and $\calN$ is equipped with an action of $\QQ_{p^{2}}$. 
We will use covariant Dieudonn\'{e} theory throughout this article. A Dieudonn\'{e} lattice $\rmM$ is a lattice in $\calN$  with the property that $p\rmM \subset \rmF\rmM\subset \rmM$.  A Dieudonn\'{e} lattice is superspecial if $\rmF^{2}\rmM=p\rmM$ and in this case $\rmF=\rmV$. We are concerned with a Dieudonn\'{e} lattice $\rmM$ with an additional endomorphism $\iota: \ZZ_{p^{2}}\rightarrow \End(\rmM)$. We can decompose $\rmM$ as $\rmM_{0}\oplus \rmM_{1}$ according to the action of $\ZZ_{p^{2}}$. 

Let $\Nilp$ be the category of $\rmW_{0}$-schemes over which $p$ is locally nilpotent. We consider the set valued functor $\calM$ that sends $\rmS\in\Nilp$ to the isomorphism classes of the collection $(\rmX, \iota_{\rmX},  \lambda_{\rmX}, \rho_{\rmX})$ where:
\begin{enumerate} 
\item $\mathrm{X}$ is a $p$-divisible group of dimension $2$ and height $4$ over $\rmS$;
\item $\iota_{\rmX}:\ZZ_{p^{2}}\rightarrow \End(\rmX)$ is an action of $\ZZ_{p^{2}}$ on $\rmX$ defined over $\rmS$;
\item $\rho_{\rmX}: \rmX\times_{\rmS} \overline{\rmS}\rightarrow \XX\times_{\FF}\overline{\rmS}$ is an $\ZZ_{p^{2}}$-linear quasi-isogeny over $\overline{\rmS}$ which is the special fiber of $\rmS$ at $p$.
\end{enumerate}

We require that $\iota_{\rmX}$ satisfies the Kottwitz condition 
\begin{equation*}
\det(\iota_{\rmX}(a); \Lie(\rmX))=\rmN_{\QQ_{p^{2}}/\QQ_{p}}(a)
\end{equation*}
for $a\in \ZZ_{p^{2}}$. For $\rho_{\rmX}: \rmX\times_{\rmS} \overline{\rmS}\rightarrow \XX\times_{\FF}\overline{\rmS}$, we require that $\rho_{\rmX}^{-1}\circ\lambda_{\XX}\circ\rho_{\rmX}=c(\rho_{\rmX})\lambda_{\rmX}$
for a $\QQ_{p}$-multiple $c(\rho_{\rmX})$. This moduli problem is representable by a formal scheme $\calM$, locally formally of finite type over $\Spf{\mathrm{W}_{0}}$. We will be mainly concerned with the underlying reduced closed subscheme $\calM_{\FF}$ of $\calM$. Let $x=(\rmX, \iota_{\rmX},  \lambda_{\rmX}, \rho_{\rmX})$ be a point in $\calM(\FF)$ and let $\rmM$ be the Dieudonn\'e lattice of $\rmX$. The action of $\ZZ_{p^{2}}$-action on $\rmX$ gives rise to a grading of the Dieudonn\'e module $\rmM=\rmM_{0}\oplus\rmM_{1}$. For this lattice, we always have
\begin{equation*}
\begin{aligned}
&p\rmM_{0}\subset^{1}\rmV\rmM_{1}\subset^{1}\rmM_{0}\\
&p\rmM_{1}\subset^{1}\rmV\rmM_{0}\subset^{1}\rmM_{1}.\\
\end{aligned}
\end{equation*}
We say $i\in\{0,1\}$ is a {\em critical index} for $\rmM$ with respect to the $\ZZ_{p^{2}}$-action if $\rmV^{2}\rmM_{i}=p\rmM_{i}$ and in this case we say $i$ is a critical index of the point $x$. It is clear that $\rmM$ is superspecial if $0$ and $1$ are both critical indices of $\rmM$ and in this case we say $x$ is a superspecial point. 

\begin{lemma}\label{RZ-space}
We have the following statements:
\begin{enumerate}
\item for any Dieudonn\'e lattice $\rmM$ associated to a point in $\calM(\FF)$, at least one $i\in\{0,1\}$ is critical for $\rmM$;
\item we have a partition of the scheme $\calM_{\FF}=\calM^{\circ}_{\FF}\cup\calM^{\bullet}_{\FF}$ where $\calM^{\circ}_{\FF}$ consists those points of $\calM_{\FF}$ such that $i=0$ is a critical index of the associated Dieudonn\'e lattice and $\calM^{\bullet}_{\FF}$ consists those points of $\calM_{\FF}$ such that $i=1$ is a critical index of the associated Dieudonn\'e lattice;
\item the irreducible components of $\calM^{\circ}_{\FF}$ and $\calM^{\bullet}_{\FF}$ are projective lines. These two family of projective lines intersect at the superspecial points of $\calM_{\FF}$.
\end{enumerate}
\end{lemma}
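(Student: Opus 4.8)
The plan is to analyze the Dieudonné lattices $\rmM = \rmM_0 \oplus \rmM_1$ attached to points of $\calM(\FF)$ directly, using the chain of inclusions
\begin{equation*}
p\rmM_0 \subset^{1} \rmV\rmM_1 \subset^{1} \rmM_0, \qquad p\rmM_1 \subset^{1} \rmV\rmM_0 \subset^{1} \rmM_1,
\end{equation*}
which say that $\rmV$ induces length-one maps $\rmM_1 \to \rmM_0$ and $\rmM_0 \to \rmM_1$ modulo the evident $p$-scalings. For part (1), I would argue that $\rmV^2$ maps $\rmM_i$ into $p\rmM_i$ with total colength $2$ (one from each $\rmV$-step), so $\rmV^2\rmM_i \subseteq p\rmM_i$ always, and the colength of $\rmV^2\rmM_i$ in $p\rmM_i$ is $2 - (\text{colength of }\rmV^2\rmM_{1-i}\text{ in }p\rmM_{1-i})$ once one notes the two colengths sum to $2$ (this follows from the symmetry of the diagram under swapping indices, or from a determinant/length count of $\rmV^2$ on $\rmM = \rmM_0 \oplus \rmM_1$ using $\Nm(\rmF\rmV) = p$ type relations). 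Hence at least one of the two colengths is $0$, i.e.\ at least one $i$ is critical. Equivalently: if $0$ were not critical, the length-one inclusions force $\rmV^2\rmM_1 = p\rmM_1$, so $1$ is critical. Part (2) is then immediate: define $\calM^\circ_{\FF}$ (resp.\ $\calM^\bullet_{\FF}$) as the locus where $0$ (resp.\ $1$) is critical; these are closed (the condition $\rmV^2\rmM_i = p\rmM_i$ is a closed condition, being the vanishing of a map of vector bundles over the base), they cover $\calM_{\FF}$ by (1), and their intersection is the superspecial locus by definition of critical index.

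For part (3), the key is to give, for a fixed critical index, a concrete parametrization of the corresponding component. Say $i = 0$ is critical for $x$, so $\rmM_0$ is determined up to the superspecial lattice structure, and the only remaining freedom is the choice of $\rmM_1$ sitting between $\rmV\rmM_0$ and $p^{-1}\rmV\rmM_0$ (using $p\rmM_1 \subset^1 \rmV\rmM_0 \subset^1 \rmM_1$), subject to $\rmM_1$ being a Dieudonné lattice. I would show this choice amounts to selecting a line in the two-dimensional $\FF$-vector space $\rmV^{-1}(p\rmM_0)/\rmM_0$ inside $\rmM_1$ or, more precisely, a $k$-point of $\PP(\rmV\rmM_0 / p\rmM_1) \cong \PP^1$ — the standard Deligne–Lusztig / Bruhat–Tits picture for the supersingular locus of a quaternionic Shimura surface, exactly as in the $\rmM_2(\QQ_p)$-Hilbert case but with the roles organized by the $\ZZ_{p^2}$-grading. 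Running this over $\rmS \in \Nilp$ rather than over $\FF$-points (and using that deformations of a superspecial $p$-divisible group with the given endomorphisms and polarization are rigid in the relevant direction) identifies each component of $\calM^\circ_{\FF}$ with $\PP^1$. The same argument with indices swapped handles $\calM^\bullet_{\FF}$. A superspecial point lies on exactly one line from each family (the line in $\calM^\circ$ recording $\rmM_1$ given $\rmM_0$, and the line in $\calM^\bullet$ recording $\rmM_0$ given $\rmM_1$), so the two families meet precisely at superspecial points, and at such a point the local picture is a transverse crossing of two $\PP^1$'s.

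I expect the main obstacle to be part (3): turning the lattice-theoretic count of the "$\PP^1$ of choices" into a scheme-theoretic statement that the irreducible components are literally isomorphic to $\PP^1$ over $\FF$ (not merely that their $\FF$-points form a $\PP^1$). This requires either invoking a Rapoport–Zink uniformization / known structural description of $\calM_{\FF}$ of the type in the cited works (Kudla–Rapoport, Tian–Xiao, Lan, or Li–Tian), or carrying out a deformation-theoretic argument showing the family of lattices $\rmM_1$ over a $\PP^1$-base is flat with the correct special fibers. The reduction steps (1)--(2) are essentially bookkeeping with the colength-one inclusions; part (3) is where the real geometric input sits, and I would lean on the already-cited literature (\cite{KR1}, \cite{TX}, \cite{LT}) for the precise component structure rather than reprove it from scratch.
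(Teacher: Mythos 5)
Your argument for part (1) has a genuine gap, and it is the one place where the lemma is not just bookkeeping. The claim ``$\rmV^{2}\rmM_{i}\subseteq p\rmM_{i}$ always, with the two colengths summing to $2$'' is false: since $\rmV$ shifts the grading and each step $\rmV\rmM_{1-i}\subset^{1}\rmM_{i}$ has colength $1$, the sublattice $\rmV^{2}\rmM_{i}$ has colength $2$ in $\rmM_{i}$, exactly the colength of $p\rmM_{i}$; hence $\rmV^{2}\rmM_{i}\subseteq p\rmM_{i}$ holds if and only if $\rmV^{2}\rmM_{i}= p\rmM_{i}$, i.e.\ if and only if $i$ is already critical, so your colength count collapses and proves nothing. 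The fallback sentence ``if $0$ is not critical the length-one inclusions force $\rmV^{2}\rmM_{1}=p\rmM_{1}$'' is likewise unsupported, and in fact part (1) is \emph{not} a formal consequence of the colength-one chain conditions: the Dieudonn\'e module of an ordinary $p$-divisible group of height $4$ and dimension $2$ with $\ZZ_{p^{2}}$-action (unit-root part plus its complement, each of rank $2$ and graded rank $1$) satisfies $p\rmM_{0}\subset^{1}\rmV\rmM_{1}\subset^{1}\rmM_{0}$ and $p\rmM_{1}\subset^{1}\rmV\rmM_{0}\subset^{1}\rmM_{1}$, yet neither index is critical there. What makes the statement true on $\calM(\FF)$ is that every point carries a quasi-isogeny to the fixed supersingular framing object $\XX$, so the isocrystal is isoclinic of slope $1/2$; your proposed proof never uses this. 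In the Drinfeld situation treated later in the paper one can argue purely with the lattice because of the extra operator $\Pi$ with $\Pi^{2}=p$, but no such operator is available here, and the paper accordingly does not reprove (1): it quotes Kudla--Rapoport, Lemma 4.2 of the cited work, where supersingularity enters. You must either invoke that reference or supply an argument using $\rmF$, $\rmV$ and the slope condition (e.g.\ via the $\sigma^{2}$-linear operator $p\rmV^{-2}$ and its fixed lattices).

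Parts (2) and (3) of your proposal are fine and essentially coincide with the paper's treatment: (2) is immediate from (1), and for (3) the paper, like you, does not reprove the component structure but cites Kudla--Rapoport (Proposition 4.4), merely recalling that for a point with critical index $i$ the lattice $\Lambda_{i}=\rmM_{i}^{p\rmV^{-2}}$ is a $\ZZ_{p^{2}}$-lattice and the component is $\PP(\Lambda_{i}/p\Lambda_{i})$. Your parametrization of the line through a point with critical index $0$ by lattices $\rmM_{1}$ with $\rmV\rmM_{0}\subset^{1}\rmM_{1}\subset^{1}p^{-1}\rmV\rmM_{0}$ is equivalent on $\FF$-points (criticality of $0$ guarantees each such choice is again a Dieudonn\'e lattice), and your honest acknowledgment that the scheme-theoretic statement should be taken from the cited literature matches what the paper actually does.
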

\begin{proof}
The first part $(1)$ follows from \cite[Lemma 4.2]{KR1}. The second part $(2)$ follows from $(1)$.  The third part $(3)$ is \cite[Proposition 4.4]{KR1} but we will recall briefly the construction of those projective lines: suppose $\rmM$ is associated to a point in $\calM^{\circ}_{\FF}$, then $\Lambda_{0}=\rmM^{p\rmV^{-2}}_{0}$ is a lattice over $\ZZ_{p^{2}}$, then we associate the projective line given by $\PP(\Lambda_{0}/p\Lambda_{0})$. Suppose $\rmM$ is associated to a point in $\calM^{\bullet}_{\FF}$, then $\Lambda_{1}=\rmM^{p\rmV^{-2}}_{1}$ is a lattice over $\ZZ_{p^{2}}$, then we associate the projective line given by $\PP(\Lambda_{1}/p\Lambda_{1})$. It is clear these projective lines are defined over $\FF_{p^{2}}$.
\end{proof}
\begin{remark}
We will refer to those projective lines in $\calM^{\circ}$ as projective lines of $\circ$-type and those projective lines in $\calM^{\bullet}$ as projective lines of $\bullet$-type.
\end{remark}
Let $\overline{\rmQ}$ be the quaternion algebra obtained from $\rmQ$ by switching the invariants at the archimedean places of $\rmF$, then we define the algebraic group $\rmG(\overline{\rmQ})$ over $\QQ$ given by the Weil restriction of $\overline{\rmQ}^{\times}$ from $\rmF$ to $\QQ$. Let $\rmK$ be an open compact subgroup of $\rmG(\rmQ)(\mathbf{A}^{\infty})$, then $\rmK$ can be viewed as an open compact subgroup of $\rmG(\overline{\rmQ})(\mathbf{A}^{\infty})$. Let $\overline{\rmX}(\rmQ)$ be the special fiber of the quaternionic surface $\rmX(\rmQ)$ over $\FF$.
The Rapoport--Zink uniformization theorem \cite[Theorem 6.1]{RZ}, \cite[Theorem 1.2]{Shen} furnishes the following description of the supersingular locus $\overline{\rmX}^{\ss}(\rmQ)$ of $\overline{\rmX}(\rmQ)$.

\begin{proposition}\label{ss-locus}
The supersingular locus $\overline{\rmX}^{\ss}(\rmQ)$ of $\overline{\rmX}(\rmQ)$ is pure of dimension $1$.
\begin{enumerate}
\item We have an isomorphism from the double quotient 
\begin{equation*}
\rmG(\overline{\rmQ})(\QQ)\backslash\calM_{\FF}\times \rmG(\overline{\rmQ})(\mathbf{A}^{\infty,p})/\rmK^{p}
\end{equation*}
and $\overline{\rmX}^{\ss}(\rmQ)$ which descends to an isomorphism over $\FF_{p^{2}}$;
\item The irreducible components of $\overline{\rmX}^{\ss}(\rmQ)$ are projective lines which are parametrized by two copies of 
\begin{equation*}
\rmZ(\overline{\rmQ})=\rmG(\overline{\rmQ})(\QQ)\backslash\rmG(\overline{\rmQ})(\mathbf{A}^{\infty})/\rmK^{p}\rmK_{p}
\end{equation*}
where $\rmK_{p}$ is  $(\calO_{\overline{\rmQ}}\otimes\ZZ_{p})^{\times}$. We denote these two copies by $\rmZ^{\circ}(\overline{\rmQ})$ and $\rmZ^{\bullet}(\overline{\rmQ})$ respectively, they parametrize those projective lines of $\circ$-type and those projective lines of $\bullet$-type respectively;
\item The superspecial locus $\overline{\rmX}^{\ssp}(\rmQ)$ is a discrete set of points parametrized by 
\begin{equation*}
\rmZ_{\Iw}(\overline{\rmQ})=\rmG(\overline{\rmQ})(\QQ)\backslash\rmG(\overline{\rmQ})(\mathbf{A}^{\infty})/\rmK^{p}\Iw_{p}
\end{equation*}
where $\Iw_{p}$ is the Iwahori subgroup of $\rmK_{p}$.
\end{enumerate}
\end{proposition}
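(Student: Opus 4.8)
The plan is to deduce all three parts from the $p$-adic (Rapoport--Zink) uniformization of the supersingular locus, combined with the explicit description of the underlying reduced Rapoport--Zink space $\calM_{\FF}$ recorded in Lemma \ref{RZ-space}. The first step is to observe that, by the Kottwitz signature condition, a supersingular point of $\overline{\rmX}(\rmQ)$ carries a $p$-divisible group isoclinic of slope $\tfrac12$, so that the supersingular locus equals the basic locus and the uniformization theorem of \cite[Theorem 6.1]{RZ}, in the form for integral models of abelian type established by Shen \cite[Theorem 1.2]{Shen}, applies. It yields an isomorphism of $\FF$-schemes
\begin{equation*}
\overline{\rmX}^{\ss}(\rmQ)\;\cong\; I(\QQ)\backslash \calM_{\FF}\times \rmG(\rmQ)(\Adel^{\infty,p})/\rmK^{p},
\end{equation*}
where $I$ is the inner form of $\rmG(\rmQ)$ agreeing with $\rmG(\rmQ)$ at every finite place, anisotropic modulo centre at $\infty$, with $I(\QQ_{p})$ the group $J_{b}(\QQ_{p})$ of $\ZZ_{p^{2}}$-linear self-quasi-isogenies of the framing object $(\XX,\iota_{\XX},\lambda_{\XX})$. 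A Dieudonné-module computation as in \cite{KR1} identifies $J_{b}(\QQ_{p})$ with $(\overline{\rmQ}\otimes_{\QQ}\QQ_{p})^{\times}$, and since switching the archimedean invariants is exactly the passage from $\rmQ$ to $\overline{\rmQ}$ one gets $I=\rmG(\overline{\rmQ})$; because $\rmQ$ and $\overline{\rmQ}$ have the same finite completions, $\rmG(\rmQ)(\Adel^{\infty,p})=\rmG(\overline{\rmQ})(\Adel^{\infty,p})$ and part $(1)$ follows. Purity of dimension $1$ and the descent to $\FF_{p^{2}}$ are inherited from the corresponding assertions about $\calM_{\FF}$ in Lemma \ref{RZ-space}, using that the uniformization is $\mathrm{Frob}$-equivariant.

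For parts $(2)$ and $(3)$ the plan is to push the combinatorics of $\calM_{\FF}$ through the uniformization. By Lemma \ref{RZ-space}, $\calM_{\FF}$ is the union of the two families of projective lines $\calM^{\circ}_{\FF}$ and $\calM^{\bullet}_{\FF}$, meeting precisely at the superspecial points, and I would analyse the $J_{b}(\QQ_{p})=(\overline{\rmQ}\otimes\QQ_{p})^{\times}$-action on these data via the Bruhat--Tits tree of this group: the index-two subgroup of elements whose reduced norm has even valuation preserves each family of lines and acts transitively on it with stabiliser a conjugate of $\rmK_{p}=(\calO_{\overline{\rmQ}}\otimes\ZZ_{p})^{\times}$, an odd element interchanges the two families, and $J_{b}(\QQ_{p})$ acts transitively on the superspecial points, the stabiliser of such a point being the Iwahori subgroup $\Iw_{p}$ equal to the intersection of the stabilisers of the two lines through it. Substituting the resulting orbit descriptions into the uniformization isomorphism and taking the $I(\QQ)=\rmG(\overline{\rmQ})(\QQ)$-quotient converts $\{\circ\text{-lines}\}$ and $\{\bullet\text{-lines}\}$ each into a copy of
\begin{equation*}
\rmG(\overline{\rmQ})(\QQ)\backslash \rmG(\overline{\rmQ})(\Adel^{\infty})/\rmK^{p}\rmK_{p}=\rmZ(\overline{\rmQ}),
\end{equation*}
and the superspecial points into
\begin{equation*}
\rmG(\overline{\rmQ})(\QQ)\backslash \rmG(\overline{\rmQ})(\Adel^{\infty})/\rmK^{p}\Iw_{p}=\rmZ_{\Iw}(\overline{\rmQ}),
\end{equation*}
which are precisely $(2)$ and $(3)$, the copies $\rmZ^{\circ}(\overline{\rmQ})$, $\rmZ^{\bullet}(\overline{\rmQ})$ being the $\circ$- and $\bullet$-families.

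The step I expect to be the main obstacle is the precise bookkeeping of the local data at $p$: checking that the Rapoport--Zink datum $(\rmX,\iota_{\rmX},\lambda_{\rmX},\rho_{\rmX})$ introduced here matches the hypotheses of \cite{RZ} and \cite{Shen}, computing $J_{b}(\QQ_{p})$ and recognising its action on $\calM_{\FF}$ as that of $(\overline{\rmQ}\otimes\QQ_{p})^{\times}$ on the vertices and edges of its tree, and --- most delicately --- keeping track of the factor of two that produces the \emph{two} copies $\rmZ^{\circ}(\overline{\rmQ})$, $\rmZ^{\bullet}(\overline{\rmQ})$ rather than a single one, together with the verification that $\rmK_{p}$ and $\Iw_{p}$ are exactly the stabilisers appearing above. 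Granting these, the reducedness of $\overline{\rmX}^{\ss}(\rmQ)$, the $\FF_{p^{2}}$-rationality, and the identification of the $\circ$- and $\bullet$-families with the two copies are formal consequences of Lemma \ref{RZ-space} and the equivariance of the uniformization.
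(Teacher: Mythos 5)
Your overall route --- Rapoport--Zink uniformization for part (1), then pushing the component and superspecial combinatorics of Lemma \ref{RZ-space} through the double quotient for parts (2) and (3) --- is the same as the paper's, which simply cites \cite{RZ}, \cite{Shen}, the effectivity of the Weil descent datum over $\ZZ_{p^{2}}$, and Lemma \ref{RZ-space}. However, your key orbit analysis at $p$ is wrong. You claim that the subgroup of $J_{b}(\QQ_{p})\cong(\overline{\rmQ}\otimes\QQ_{p})^{\times}$ of elements whose reduced norm has even valuation preserves each family of lines, while an element of odd valuation interchanges the $\circ$- and $\bullet$-families. In fact every $\ZZ_{p^{2}}$-linear quasi-isogeny commutes with the $\ZZ_{p^{2}}$-action and with $\rmV$, hence preserves the grading $\rmM=\rmM_{0}\oplus\rmM_{1}$ of the Dieudonn\'e module and therefore the set of critical indices; so \emph{all} of $J_{b}(\QQ_{p})$ preserves $\calM^{\circ}_{\FF}$ and $\calM^{\bullet}_{\FF}$ separately. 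Each family of lines is a single orbit with stabilizer conjugate to $\rmK_{p}$, via the lattice description in Lemma \ref{RZ-space}(3), and it is precisely this that produces \emph{two} copies of $\rmZ(\overline{\rmQ})$ after taking the quotient. The interchange of the two copies is effected not by the group but by the Weil descent datum: the $\sigma$-semilinear Frobenius shifts the grading, which is why the parametrization is only asserted over $\FF_{p^{2}}$ and why $\Gal(\FF_{p^{2}}/\FF_{p})$ swaps the two factors in Proposition \ref{Tate}.

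If your claimed action were correct, the statement you are proving would fail: by the norm theorem for the totally definite algebra $\overline{\rmQ}$, the group $\rmG(\overline{\rmQ})(\QQ)$ contains elements whose reduced norm has odd valuation at the prime above $p$, and such an element would carry $\circ$-lines to $\bullet$-lines inside the uniformization, so the two families would be identified in the double quotient and one would obtain a single copy of $\rmZ(\overline{\rmQ})$ rather than two --- contradicting part (2) and, downstream, the Frobenius-swap statement of Proposition \ref{Tate} on which the reciprocity formula relies. There is also an internal inconsistency: under your description the $\circ$-family is an orbit of the index-two subgroup only, so ``substituting the orbit description'' would yield a quotient of that subgroup, not $\rmG(\overline{\rmQ})(\QQ)\backslash\rmG(\overline{\rmQ})(\Adel^{\infty})/\rmK^{p}\rmK_{p}$. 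The remaining ingredients of your outline (identification of $J_{b}(\QQ_{p})$ with $(\overline{\rmQ}\otimes\QQ_{p})^{\times}$ and of the inner form with $\rmG(\overline{\rmQ})$, the Iwahori stabilizer of a superspecial point as the intersection of the stabilizers of the two lines through it, and Frobenius-equivariance for the $\FF_{p^{2}}$-descent) are consistent with what the paper implicitly uses; only the swapping mechanism needs to be corrected as above.
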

\begin{proof}
The first part $(1)$ follows from the Rapoport--Zink uniformization theorem and the fact that the Weil descent datum for $\calM$ is effective over $\ZZ_{p^{2}}$. The second part $(2)$ and the third part $(3)$ follow from the first part and descriptions in Lemma \ref{RZ-space} immediately.
\end{proof}

\begin{remark}
By the previous proposition we can write $\overline{\rmX}^{\ss}(\rmQ)=\overline{\rmX}^{\circ}(\rmQ)\cup \overline{\rmX}^{\bullet}(\rmQ)$ where
\begin{enumerate}
\item $\overline{\rmX}^{\circ}(\rmQ)$ is a $\PP^{1}$-bundle over $\rmZ^{\circ}(\rmQ)$ which we will call the $\circ$-component for $\overline{\rmX}(\rmQ)$; 
\item $\overline{\rmX}^{\bullet}(\rmQ)$ is a $\PP^{1}$-bundle over $\rmZ^{\bullet}(\rmQ)$ which we will call the $\bullet$-component for $\overline{\rmX}(\rmQ)$.
\end{enumerate}
\end{remark}

\begin{remark}
Note that the description from the previous Proposition \ref{ss-locus} can also be obtained using the so-called isogeny trick instead of using Rapoport--Zink uniformization theorem, see the main result of \cite{TX} which treats much more general quaternionic Shimura varieties.
\end{remark}

\section{Quaternionic Hirzebruch--Zagier divisor}
\subsection{Shimura curves and Drinfeld uniformization}
Let $\rmB$ be the indefinite quaternion algebra over $\QQ$ with discriminant $p\rmN^{-}$ considered in the last section. We denote by $\rmG(\rmB)$ the algebraic group over $\QQ$ given by $\rmB^{\times}$. We fix an open compact subgroup $\rmK$ of $\rmG(\rmB)(\mathbf{A}^{\infty})$ such that $\rmK^{p}\rmK_{p}$ where $\rmK^{p}$ is sufficiently small and $\rmK_{p}=(\calO_{\rmB}\otimes\ZZ_{p})^{\times}$. 
We consider the moduli problem over $\ZZ_{(p)}$ which assigns each $\rmS\in \mathrm{Sch}/\ZZ_{(p)}$ the set of triples $(\rmA, \iota, \overline{\eta})$ where
\begin{enumerate}
\item $\rmA$ is an abelian scheme over $\rmS$ of relative dimension $2$, up to prime to $p$-isogeny;
\item $\iota: \calO_{\rmB}\hookrightarrow \End_{\rmS}(\rmA)$ is an $\calO_{\rmB}$-action on $\rmA$ which is special in the sense of \cite[131--132]{BC};
\item $\overline{\eta}^{p}$ is a $\rmK^{p}$-equivalence class of $\calO_{\rmB}$-equivariant isomorphisms 
\begin{equation*}
\eta^{p}: \widehat{\rmV}^{p}(\rmA)\xrightarrow{\sim} \calO_{\rmB}\otimes\mathbf{A}^{\infty, p}
\end{equation*} 
where $\widehat{\rmV}^{p}(\rmA)=\prod_{v\neq p}\rmT_{v}(\rmA)\otimes\QQ$ and $\eta^{p}$ preserves Weil-pairing on the left-hand-side and the reduced trace pairing on the right-hand-side up to a scalar in $(\mathbf{A}^{\infty,p})^{\times}$. 
\end{enumerate}

It is well known this moduli problem is representable by a projective scheme over $\ZZ_{(p)}$ of relative dimension $1$ denoted by $\rmX_{\rmK}(\rmB)$. Its generic fiber $\mathrm{Sh}_{\rmK}(\rmB)=\rmX_{\rmK}(\rmB)_{\QQ}$ is the canonical model of the Shimura variety determined by $\rmB$ and $\rmK$ over $\QQ$. The set of $\CC$-points of $\mathrm{Sh}_{\rmK}(\rmB)$ is given by the following double coset space
\begin{equation*}
\mathrm{Sh}_{\rmK}(\rmB)(\CC)=\rmG(\rmB)(\QQ)\backslash \calH^{\pm} \times \rmG(\rmB)(\mathbf{A}^{\infty})/\rmK.
\end{equation*}

We will now fix a definite choice of the level structure as we did for the quaternionic Hilbert--Blumenthal surface. We introduce a moduli problem over $\ZZ[1/\rmN d]$  that assigns each $\rmS\in\mathrm{Sch}/\ZZ[1/\rmN d]$  the set of isomorphism classes of $4$-tuples $(\rmA, \iota, \rmC_{\rmN^{+}}, \alpha_{d})$ where
\begin{enumerate}
\item $\rmA$ is an abelian scheme over $\rmS$ of relative dimension $2$;
\item $\iota: \calO_{\rmB}\rightarrow \End_{\rmS}(\rmA)$ is a homomorphism which is special in the sense of \cite[131--132]{BC}; 
\item $\rmC_{\rmN^{+}}$ is an $\calO_{\rmB}$-stable finite flat subgroup of $\rmA[\rmN^{+}]$ such that at every geometric point, it is cyclic of order $(\rmN^{+})^{2}$;
\item $\alpha_{d}: (\ZZ/d\ZZ)^{2}_{\rmS}\hookrightarrow \rmA[d]$ is an $\calO_{\rmB}$-equivariant injection of group schemes over $\rmS$.\end{enumerate}
This moduli problem is representable by a scheme ${\rmX}_{\rmN^{+}, d}(\rmB)$ once $d\geq4$. When there is no danger of confusion, we will denote this scheme simply by $\rmX(\rmB)$. We will refer the readers to \cite[\S 2.1]{Wang} for the precise definition of the open compact subgroup corresponding to the level structure in the above moduli problem.  

Let $\FF$ be an algebraic closure of $\FF_{p}$ and $\rmW_{0}=\rmW(\FF)$ be the ring of Witt vectors of $\FF$ whose fraction field is denoted by $\rmK_{0}$. Let $\rmD$ be the quaternion division algebra over $\QQ_{p}$ and $\calO_{\rmD}$ be the maximal order of $\rmD$. We will present $\calO_{\rmD}$ as
$\calO_{\rmD}=\ZZ_{p^{2}}[\Pi]/(\Pi^{2}-p)$
with $\Pi a=\sigma(a)\Pi$ for $a\in \ZZ_{p^{2}}$. 
We first recall the notion of a \emph{special formal $\calO_{D}$-module}. Let $\rmS$ be a $\rmW_{0}$-scheme. A special formal $\calO_{\rmD}$-module over $\rmS$ is a formal $p$-divisible group $\rmX$ of dimension $2$ and height $4$ with an $\calO_{\rmD}$-action 
$\iota: \calO_{\rmD}\rightarrow \End_{\rmS}(\rmX)$ such that $\Lie(\rmX)$ is a locally free $\ZZ_{p^{2}}\otimes \calO_{\rmS}$-module of rank $1$. We fix a special $\calO_{\rmD}$-module $\mathbb{X}$ over $\FF$ whose Dieudonn\'{e} module is denoted by $\mathbb{M}$. 

Let $\calM_{\mathrm{Dr}}$ be the set valued functor that sends $S\in (\mathrm{Nilp})$ to the set of  isomorphism classes of pairs $(\rmX, \rho_{\rmX})$ where
\begin{enumerate}
\item $\rmX$ is special formal $\calO_{\rmD}$-module;
\item $\rho_{\rmX}: \rmX\times_{\rmS}\overline{\rmS}\rightarrow \mathbb{X}\times_{\FF} \overline{\rmS}$ is a quasi-isogney. 
\end{enumerate}
The functor $\calM_{\mathrm{Dr}}$ is represented by a formal scheme over $\rmW_{0}$ which we also denote by $\calM_{\Dr}$. The formal scheme $\calM_{\mathrm{Dr}}$ decomposes into a disjoint union
\begin{equation*}
\calM_{\mathrm{Dr}}=\bigsqcup_{i\in\ZZ}\calM_{\mathrm{Dr}, i}
\end{equation*}
according to the height $i$ of the quasi-isogeny $\rho_{\rmX}$. Each formal scheme $\calM_{\mathrm{Dr}, i}$ is isomorphic to the \emph{$p$-adic upper half plane} $\calH_{p}$. The group $\GL_{2}(\QQ_{p})$ acts naturally on the formal scheme $\calM_{\mathrm{Dr}}$ and each $\calM_{\Dr, i}$ affords an action of the group 
\begin{equation*}
\GL^{0}_{2}(\QQ_{p}):=\{g\in\GL_{2}(\QQ_{p}): \ord_{p}(\det(g))=0\}. 
\end{equation*}
Let $(\rmX, \rho)\in \calM_{\Dr}(\FF)$ and $\rmM$ be the Dieudonn\'{e} lattice of $\rmX$. The action of $\ZZ_{p^{2}}$ on $\rmX$ induces a grading $\rmM=\rmM_{0}\oplus \rmM_{1}$ that satisfies
\begin{equation}
\begin{aligned}
& p\rmM_{0}\subset^{1} \rmV\rmM_{1}\subset^{1} \rmM_{0}\hphantom{aa}p\rmM_{1}\subset^{1} \rmV\rmM_{0}\subset^{1} \rmM_{1};\\
& p\rmM_{0}\subset^{1} \Pi \rmM_{1}\subset^{1} \rmM_{0}\hphantom{aa}p\rmM_{1}\subset^{1} \Pi \rmM_{0}\subset^{1} \rmM_{1}.\\
\end{aligned}
\end{equation}
Since the action of $\Pi$ and $\rmV$ commute, we have the induced maps 
\begin{equation}
\begin{aligned}
& \Pi: \rmM_{0}/\rmV\rmM_{1}\rightarrow \rmM_{1}/\rmV\rmM_{0},\\  
& \Pi: \rmM_{1}/\rmV\rmM_{0}\rightarrow \rmM_{0}/\rmV\rmM_{1}.\\
\end{aligned}
\end{equation}
Since both $\rmM_{0}/\rmV\rmM_{1}$ and $\rmM_{1}/\rmV\rmM_{0}$ are one dimensional and the composite of the two maps is obviously zero, we can conclude that there is an $i\in \{0, 1\}$ such that $\Pi \rmM_{i}\subset \rmV\rmM_{i}$. Since both $\Pi \rmM_{i}$ and $\rmV\rmM_{i}$ are of colength $1$ in $\rmM_{i+1}$, we conclude that they are in fact equal to each other. We say that $i$ is a \emph{critical index} for $\rmM$ with respect to the $\rmD$-action if $\rmV\rmM_{i}=\Pi \rmM_{i}$ and a critical index always exists for $\rmM$. Let $\tau=\Pi^{-1}\rmV$ and it acts as an automorphism on $\rmM_{i}$ if $i$ is a critical index. If $0$ is a critical index, then we set $\Lambda_{0}=\rmM^{\tau=1}_{0}$ and this is a $\ZZ_{p}$-lattice of rank $2$ and we associate to it the projective line $\PP(\Lambda_{0}/p\Lambda_{0})$. Then $\rmV\rmM_{1}/p\rmM_{0}\subset^{1} \rmM_{0}/p\rmM_{0}=\Lambda_{0}/p\Lambda_{0}\otimes \FF$ gives a point on $\PP(\Lambda_{0}/p\Lambda_{0})(\FF)$. If $1$ is a critical index, then we similarly put $\Lambda_{1}=\Pi \rmM^{\tau=1}_{1}$ and we again associate to it the projective line $\PP(\Lambda_{1}/p\Lambda_{1})$. Similarly $\rmV\rmM_{0}/p\rmM_{1}\subset^{1} \rmM_{1}/p\rmM_{1}=\Lambda_{1}/p\Lambda_{1}\otimes \FF$ gives a point on $\PP(\Lambda_{1}/p\Lambda_{1})(\FF)$. This discussion leads to the following lemma. 

\begin{lemma}\label{Dr-space}
The space $\calM_{\mathrm{Dr},\FF}$ admits the following descriptions.
\begin{enumerate}
\item For any Dieudonn\'e lattice $\rmM$ associated to a point in $\calM_{\Dr}(\FF)$, at least one $i\in\{0,1\}$ is critical;
\item We have a partition of the scheme $\calM_{\Dr, \FF}=\calM^{\circ}_{\Dr, \FF}\cup\calM^{\bullet}_{\Dr, \FF}$ where $\calM^{\circ}_{\Dr,\FF}$ consists those points of $\calM_{\Dr, \FF}$ such that $i=0$ is a critical index and $\calM^{\bullet}_{\Dr, \FF}$ consists those points of $\calM_{\Dr, \FF}$ such that $i=1$ is a critical index;
\item The irreducible components of $\calM^{\circ}_{\Dr, \FF}$ and $\calM^{\bullet}_{\Dr, \FF}$ are projective lines. These two family of projective lines intersect at the superspecial points of $\calM_{\Dr, \FF}$.
\end{enumerate}
\end{lemma}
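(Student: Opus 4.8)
The plan is to follow the template of the proof of Lemma~\ref{RZ-space}, since $\calM_{\Dr}$ and the Kudla--Rapoport space $\calM$ of the previous section have completely parallel lattice descriptions; in fact part~(1) has essentially been carried out already in the discussion preceding the statement, and parts~(2)--(3) go back to Drinfeld and Boutot--Carayol. Concretely, for~(1) I would just record the argument sketched above: with $\tau=\Pi^{-1}\rmV$, the two $\FF$-linear maps $\Pi\colon \rmM_{0}/\rmV\rmM_{1}\to \rmM_{1}/\rmV\rmM_{0}$ and $\Pi\colon \rmM_{1}/\rmV\rmM_{0}\to \rmM_{0}/\rmV\rmM_{1}$ compose to multiplication by $\Pi^{2}=p$, which vanishes because $p\rmM_{0}\subset \rmV\rmM_{1}$; as both source and target are one-dimensional over $\FF$, one of these maps is zero, i.e.\ $\Pi\rmM_{i}\subset \rmV\rmM_{i}$ for some $i\in\{0,1\}$, and since $\Pi\rmM_{i}$ and $\rmV\rmM_{i}$ both have colength one in $\rmM_{i+1}$ they coincide, so $i$ is critical.

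Part~(2) then follows exactly as part~(2) of Lemma~\ref{RZ-space} follows from its part~(1): the locus where $i=0$ (resp.\ $i=1$) is critical is cut out by the vanishing of the corresponding map of line bundles in the universal family, hence is a closed subscheme $\calM^{\circ}_{\Dr,\FF}$ (resp.\ $\calM^{\bullet}_{\Dr,\FF}$), and by~(1) these two closed subschemes cover $\calM_{\Dr,\FF}$.

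For~(3) I would argue as follows. Given a point of $\calM^{\circ}_{\Dr,\FF}$ with Dieudonn\'e lattice $\rmM$, the preceding discussion attaches to it the rank-two $\ZZ_{p}$-lattice $\Lambda_{0}=\rmM_{0}^{\tau=1}$ together with the line $\rmV\rmM_{1}/p\rmM_{0}\subset \Lambda_{0}/p\Lambda_{0}\otimes\FF$, i.e.\ a point of $\PP(\Lambda_{0}/p\Lambda_{0})(\FF)$. One then checks that, conversely, $\rmM$ is recovered from the pair $(\Lambda_{0},\,\rmV\rmM_{1}\bmod p\rmM_{0})$: one sets $\rmM_{0}=\Lambda_{0}\otimes_{\ZZ_{p}}\rmW_{0}$ (so that $\tau$ preserves $\rmM_{0}$ and $0$ is automatically critical), recovers $\rmV\rmM_{1}\subset\rmM_{0}$ as the preimage of the chosen line, and then $\rmM_{1}$ from $\rmV\rmM_{1}$ via the isomorphism $\rmV$ of isocrystals. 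Hence the irreducible component of $\calM^{\circ}_{\Dr,\FF}$ attached to $\Lambda_{0}$ is identified, at least on $\FF$-points, with $\PP(\Lambda_{0}/p\Lambda_{0})\cong\PP^{1}$; the parallel construction with $\Lambda_{1}=\Pi\rmM_{1}^{\tau=1}$ treats $\calM^{\bullet}_{\Dr,\FF}$. A superspecial point is by definition one where $0$ and $1$ are both critical, hence lies simultaneously on a $\circ$-line and a $\bullet$-line; conversely, on the $\circ$-line attached to $\Lambda_{0}$ the locus where $1$ is also critical is precisely the set of $\FF_{p}$-rational lines in $\PP(\Lambda_{0}/p\Lambda_{0})$, which reproduces the classical picture that each $\PP^{1}$ meets $p+1$ others transversally, at the points of $\PP^{1}(\FF_{p})$.

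The one genuinely non-formal input — that $\calM_{\Dr,\FF}$ is a reduced semistable curve whose irreducible components are \emph{proper} and are exactly the $\PP^{1}$'s produced above, equivalently that their dual graph is the Bruhat--Tits tree of $\PGL_{2}(\QQ_{p})$ — I would not reprove but quote from Boutot--Carayol \cite{BC} (or Drinfeld), just as Lemma~\ref{RZ-space} quoted \cite{KR1}; alternatively one could transport the statement from Lemma~\ref{RZ-space} via an isogeny, as in the remark following Proposition~\ref{ss-locus}. That scheme-theoretic exhaustion, together with the transversality of the $\circ$/$\bullet$ intersections at the superspecial points, is the only real obstacle here; everything else is the Dieudonn\'e-lattice bookkeeping sketched above.
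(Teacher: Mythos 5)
Your proposal is correct and follows essentially the same route as the paper: the paper offers no separate proof of Lemma \ref{Dr-space} beyond the lattice discussion immediately preceding it (the $\Pi$--$\rmV$ argument for criticality, the lattices $\Lambda_{0}=\rmM_{0}^{\tau=1}$, $\Lambda_{1}=\Pi\rmM_{1}^{\tau=1}$ and the associated lines $\PP(\Lambda_{i}/p\Lambda_{i})$), with the scheme-theoretic structure of $\calM_{\Dr,\FF}$ taken from the standard Drinfeld/Boutot--Carayol theory, exactly as you do. Your extra remarks (recovering $\rmM$ from $(\Lambda_{0},\rmV\rmM_{1}\bmod p\rmM_{0})$ and identifying the superspecial points with the $\FF_{p}$-rational points of each component) are correct refinements of what the paper leaves implicit.
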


The Cerednick--Drinfeld uniformization theorem provides the following proposition describing the special fiber $\overline{\rmX}(\rmB)_{\FF}$ of $\rmX(\rmB)$. Let $\overline{\rmB}$ be the definite quaternion algebra obtained from $\rmB$ by switching the invariant at $p$ and $\infty$. Then we define $\rmG(\overline{\rmB})$ to be algebraic group defined by $\overline{\rmB}^{\times}$. Let $\rmK$ be an open compact subgroup of $\rmG(\rmB)(\mathbf{A}^{\infty})$, then $\rmK^{p}$ can be viewed naturally as an open compact of $\rmG(\overline{\rmB})(\mathbf{A}^{\infty, p})$.

\begin{proposition}\label{curve-red}
We have the following descriptions of the scheme $\overline{\rmX}(\rmB)$.
\begin{enumerate}
\item We have an isomorphism from the double quotient 
\begin{equation*}
\rmG(\overline{\rmB})(\QQ)\backslash\calM_{\Dr, \FF}\times \rmG(\overline{\rmB})(\mathbf{A}^{\infty, p})/\rmK^{p}
\end{equation*}
to $\overline{\rmX}(\rmB)_{\FF}$ which descends to an isomorphism over $\FF_{p^{2}}$;
\item The irreducible components of $\overline{\rmX}(\rmB)$ are projective lines which are parametrized by two copies of the Shimura set
\begin{equation*}
\rmZ(\overline{\rmB})=\rmG(\overline{\rmB})(\QQ)\backslash\rmG(\overline{\rmB})(\mathbf{A}^{\infty, p})/\rmK^{p}\overline{\rmK}_{p}
\end{equation*}
where $\overline{\rmK}_{p}$ is the group $(\calO_{\overline{\rmB}}\otimes\ZZ_{p})^{\times}$. We denote these two copies by $\rmZ^{\circ}(\overline{\rmB})$ and $\rmZ^{\bullet}(\overline{\rmB})$, they parametrize those projective lines corresponding to critical index $i=0$ resp. critical index $i=1$;

\item The superspecial locus $\overline{\rmX}(\rmB)$ is a discrete set of points parametrized by 
\begin{equation*}
\rmZ_{\Iw}(\overline{\rmB})=\rmG(\overline{\rmB})(\QQ)\backslash\rmG(\overline{\rmB})(\mathbf{A}^{\infty, p})/\rmK^{p}\Iw_{p}
\end{equation*}
where $\Iw_{p}$ is the Iwahori subgroup of $\overline{\rmK}_{p}$.
\end{enumerate}
\end{proposition}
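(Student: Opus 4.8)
The plan is to deduce all three parts from the Cerednik--Drinfeld $p$-adic uniformization theorem, in complete parallel with the proof of Proposition \ref{ss-locus}. Since the geometric input (Lemma \ref{Dr-space}) and the uniformization theorem are already in place, the work is mostly a matter of passing from a formal-scheme statement to a statement about special fibers and then unwinding double cosets.

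For part (1), I would invoke the Cerednik--Drinfeld theorem in the form established by Boutot--Carayol \cite{BC} (see also the Rapoport--Zink formalism \cite{RZ}, \cite{Shen}). Because $\rmB$ is ramified at $p$, the algebra $\overline{\rmB}$ obtained by interchanging the invariants of $\rmB$ at $p$ and $\infty$ is split at $p$, so $\rmG(\overline{\rmB})(\QQ_{p})\cong\GL_{2}(\QQ_{p})$ with $\overline{\rmK}_{p}\cong\GL_{2}(\ZZ_{p})$, and the theorem provides a $\rmG(\overline{\rmB})(\mathbf{A}^{\infty,p})$-equivariant isomorphism of formal schemes over $\Spf{\rmW_{0}}$
\[
\widehat{\rmX(\rmB)}_{/\FF}\;\cong\;\rmG(\overline{\rmB})(\QQ)\backslash\big(\calM_{\Dr}\times\rmG(\overline{\rmB})(\mathbf{A}^{\infty,p})/\rmK^{p}\big)
\]
between the formal completion of $\rmX(\rmB)$ along its special fiber and the indicated quotient, where $\GL_{2}(\QQ_{p})$ acts on $\calM_{\Dr}$ through Drinfeld's action. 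Passing to the underlying reduced subschemes of the special fibers on both sides gives the asserted isomorphism with $\overline{\rmX}(\rmB)_{\FF}$ over $\FF$. For the descent to $\FF_{p^{2}}$ I would argue as in Proposition \ref{ss-locus}: the moduli interpretation of $\calM_{\Dr}$ by special formal $\calO_{\rmD}$-modules equips it with a canonical Weil descent datum compatible with the one on $\rmX(\rmB)$ (defined over $\ZZ_{(p)}$), and this descent datum is effective over $\ZZ_{p^{2}}$, so the isomorphism descends there.

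For parts (2) and (3), I would feed the structure of $\calM_{\Dr,\FF}$ recorded in Lemma \ref{Dr-space} into part (1), again exactly as in Proposition \ref{ss-locus}. By Lemma \ref{Dr-space} the irreducible components of $\calM_{\Dr,\FF}$ are projective lines, split into the $\circ$-type family (critical index $i=0$) and the $\bullet$-type family (critical index $i=1$), meeting precisely at the superspecial points; applying the functor $\rmG(\overline{\rmB})(\QQ)\backslash(-)\times\rmG(\overline{\rmB})(\mathbf{A}^{\infty,p})/\rmK^{p}$ to the set of components then shows at once that the irreducible components of $\overline{\rmX}(\rmB)$ are projective lines. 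The remaining task is to identify the two index sets. Here I would check, via Drinfeld's description, that $\GL_{2}(\QQ_{p})$ acts transitively on the set of $\circ$-type components of $\calM_{\Dr,\FF}$ with the stabilizer of a component equal to $\overline{\rmK}_{p}\cong\GL_{2}(\ZZ_{p})$ — the $\ZZ$-grading of $\calM_{\Dr}$ by the height of the quasi-isogeny getting absorbed into a single orbit because changing the height by $n$ shifts the associated lattice class by a distance congruent to $n$ modulo $2$ — and likewise for the $\bullet$-type family. Hence the $\circ$-type (resp.\ $\bullet$-type) components of $\overline{\rmX}(\rmB)_{\FF}$ are parametrized by
\[
\rmG(\overline{\rmB})(\QQ)\backslash\big(\rmG(\overline{\rmB})(\QQ_{p})/\overline{\rmK}_{p}\big)\times\rmG(\overline{\rmB})(\mathbf{A}^{\infty,p})/\rmK^{p}=\rmG(\overline{\rmB})(\QQ)\backslash\rmG(\overline{\rmB})(\mathbf{A}^{\infty})/\rmK^{p}\overline{\rmK}_{p}=\rmZ(\overline{\rmB}),
\]
which yields the two copies in (2). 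For (3), Lemma \ref{Dr-space} identifies the superspecial locus of $\calM_{\Dr,\FF}$ with the discrete set of intersection points of the two families, i.e.\ (after the same absorption of the height grading) the set of edges of the Bruhat--Tits tree of $\PGL_{2}(\QQ_{p})$, on which $\GL_{2}(\QQ_{p})$ acts transitively with stabilizer the Iwahori $\Iw_{p}$; the same double-coset manipulation gives the parametrization by $\rmZ_{\Iw}(\overline{\rmB})$.

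The hard part is not the geometric input but the bookkeeping in parts (2)--(3): one must carefully translate the ``Bruhat--Tits tree equipped with a $\ZZ$-grading by isogeny height, acted on by $\GL_{2}(\QQ_{p})$'' description of the components and superspecial points of $\calM_{\Dr,\FF}$ into the clean adelic double cosets $\rmZ(\overline{\rmB})$ and $\rmZ_{\Iw}(\overline{\rmB})$, keeping track of the central $\QQ_{p}^{\times}$ and of the height grading so that one lands on \emph{exactly two} copies of $\rmZ(\overline{\rmB})$, with the two copies genuinely distinct after passing to the global quotient. This is a standard but somewhat delicate combinatorial verification, and it is the only place where a genuine argument beyond citing Lemma \ref{Dr-space} and the Cerednik--Drinfeld theorem is needed.
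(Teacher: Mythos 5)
Your proposal is correct and follows essentially the same route as the paper, whose proof simply cites the Cerednik--Drinfeld uniformization theorem (via \cite[Proposition 3.2]{Wang}) together with the component description of $\calM_{\Dr,\FF}$ from Lemma \ref{Dr-space}; your write-up just makes explicit the double-coset bookkeeping (components indexed by genuine lattices rather than homothety classes, so the height grading absorbs the center and the stabilizers come out as $\overline{\rmK}_{p}$ and $\Iw_{p}$ on the nose) that the paper leaves to the citation.
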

\begin{proof}
This is a well-known result following from the Cerednick--Drinfeld uniformization theorem for the Shimura curve $\rmX(\rmB)$. For example, see the proof of \cite[Proposition 3.2]{Wang} for more details.
\end{proof}

\begin{remark}
By the previous proposition we can write $\overline{\rmX}(\rmB)=\overline{\rmX}^{\circ}(\rmB)\cup \overline{\rmX}^{\bullet}(\rmB)$ where 
\begin{enumerate}
\item $\overline{\rmX}^{\circ}(\rmB)$ is a $\PP^{1}$-bundle over $\rmZ^{\circ}(\overline{\rmB})$ which we will call the $\circ$-component;  
\item $\overline{\rmX}^{\bullet}(\rmB)$ is a $\PP^{1}$-bundle over $\rmZ^{\bullet}(\overline{\rmB})$ which we will call the $\bullet$-component.
\end{enumerate}
\end{remark}

\subsection{Quaternionic Hirzebruch--Zagier divisor} 
Let $(\rmA, \iota, \rmC_{\rmN^{+}}, \alpha_{d})$ be an element of $\rmX(\rmB)(\rmS)$ for a scheme $\rmS$ over $\ZZ[1/\rmN d\rmD_{\rmF}]$, we define 
\begin{enumerate}
\item $\widetilde{\rmA}=\rmA\otimes \calO_{\rmF}$ is the abelian scheme given by the Serre's tensor construction from $\rmA$;
\item $\widetilde{\iota}: \calO_{\rmB}\otimes\calO_{\rmF}\hookrightarrow \End (\rmA)\otimes\calO_{\rmF}\hookrightarrow \End (\widetilde{\rmA})$ be the  morphism given by $\iota\otimes\calO_{\rmF}$;
\item $\widetilde{\rmC}_{\rmN^{+}}\subset \widetilde{\rmA}[\frakn^{+}]$ be the $\calO_{\rmB}\otimes\calO_{\rmF}$-stable finite flat subgroup of $\widetilde{\rmA}$ given by 
\begin{equation*}
\widetilde{\rmC}_{\rmN^{+}}=\rmC_{\rmN^{+}}\otimes\calO_{\rmF};
\end{equation*}
\item $\widetilde{\alpha}_{d}: (\calO_{\rmF}/d)^{2}\hookrightarrow \rmA[\frakd]$ is the $\calO_{\rmB}\otimes\calO_{\rmF}$-equivariant injection
\begin{equation*}
\widetilde{\alpha}_{d}: (\ZZ/d)^{2}\otimes\calO_{\rmF}\hookrightarrow \rmA\otimes\calO_{\rmF}[d]=\widetilde{\rmA}[\frakd].
\end{equation*}
given by $\alpha_{d}\otimes\calO_{\rmF}$.
\end{enumerate}
This defines an element of $(\widetilde{\rmA}, \widetilde{\iota}, \widetilde{\rmC}_{\rmN^{+}}, \widetilde{\alpha}_{d})\in\widetilde{\rmX}(\rmQ)(\rmS)$ over $\ZZ[1/\rmN d\rmD_{\rmF}]$ which gives a map from $\rmX(\rmB)\otimes\ZZ[1/\rmN d \rmD_{\rmF}]$ to $\widetilde{\rmX}(\rmQ)$.  We therefore obtain the {\em quaternionic Hirzebruch--Zagier morphism} over $\ZZ[1/\rmN d \rmD_{\rmF}]$
\begin{equation*}
\theta: \rmX(\rmB)\otimes\ZZ[1/\rmN d \rmD_{\rmF}]\rightarrow \rmX(\rmQ)
\end{equation*}
by composing the previously defined map with the canonical map $\widetilde{\rmX}(\rmQ)\rightarrow \rmX(\rmQ)$.

Let $(\rmX, \iota, \rho_{\rmX})$ be an element of $\calM_{\mathrm{Dr}}(\rmS)$ with $\rmS$ in $(\mathrm{Nilp})$, then we can restrict $\iota: \calO_{\rmD}\rightarrow \End_{\rmS}(\rmX)$ to ${\ZZ_{p^{2}}}$ and $(\rmX, \iota_{\vert_{\ZZ_{p^{2}}}}, \rho_{\rmX})$ gives rise to an element of 
$\calM(\rmS)$ and hence we can define a morphism $\calM_{\mathrm{Dr}}\rightarrow \calM$. Let $\rmM=\rmM_{0}\oplus\rmM_{1}$ be the Dieudonn\'e lattice of $\rmX$. Suppose that $i\in\{0, 1\}$ is a critical index for $\rmM$ with respect to the action of $\calO_{\rmD}$, then $\rmV\rmM_{i}=\Pi\rmM_{i}$. It follows that $\rmV^{2}\rmM_{i}=p\rmM_{i}$ therefore $i$ is also a critical index for $\rmM$ with respect to the restricted action of $\ZZ_{p^{2}}$. Therefore the morphism $\calM_{\mathrm{Dr}}\rightarrow \calM$ respects the partition with respect to the two notion of critical indices and thus to the $\circ$ and $\bullet$ components, that is we have morphisms
$\calM^{\circ}_{\mathrm{Dr}}\rightarrow \calM^{\circ}$ and  $\calM^{\bullet}_{\mathrm{Dr}}\rightarrow \calM^{\bullet}$. 

\begin{lemma}\label{HZ-bar}
We have the following statements on the reduction of the Hirzebruch--Zagier morphism.
\begin{enumerate}
\item The image of the induced morphism on the special fiber
\begin{equation*}
\overline{\theta}: \overline{\rmX}(\rmB)\rightarrow \overline{\rmX}(\rmQ).
\end{equation*}
is contained in the supersingular locus $\overline{\rmX}^{\ss}(\rmQ)$ of $\overline{\rmX}(\rmQ)$.
\item The map $\overline{\theta}$ respects the partition according to the critical indices and induces a map 
\begin{equation*}
\overline{\theta}^{?}: \overline{\rmX}^{?}(\rmB)\rightarrow \overline{\rmX}^{?}(\rmQ)
\end{equation*}
which restricts to the canonical embedding of 
\begin{equation*}
\vartheta^{?}: \rmZ^{?}(\overline{\rmB})\rightarrow \rmZ^{?}(\overline{\rmQ})
\end{equation*}
for $?\in\{\circ, \bullet\}$.
\end{enumerate}
\end{lemma}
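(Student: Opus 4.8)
The plan is to prove Lemma \ref{HZ-bar} by reducing it to the analysis of Dieudonné modules already carried out in Lemmas \ref{RZ-space} and \ref{Dr-space}, together with the Rapoport--Zink and Cerednick--Drinfeld uniformizations of Propositions \ref{ss-locus} and \ref{curve-red}. For part (1), I would first recall that $\overline{\theta}$ is the reduction of the map sending $(\rmA, \iota, \rmC_{\rmN^{+}}, \alpha_{d})$ to $(\rmA\otimes\calO_{\rmF}, \iota\otimes\calO_{\rmF}, \dots)$. On the level of $p$-divisible groups, if $\rmA$ is an abelian surface with special $\calO_{\rmB}$-action over a field of characteristic $p$, then $\rmA[p^{\infty}]$ is automatically supersingular (a special formal $\calO_{\rmD}$-module is isoclinic of slope $1/2$). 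Applying Serre's tensor construction with $\calO_{\rmF}$ and using that $p$ is \emph{inert} in $\rmF$ (so $\calO_{\rmF}\otimes\ZZ_{p}=\ZZ_{p^{2}}$), one sees $(\rmA\otimes\calO_{\rmF})[p^{\infty}] \cong \rmA[p^{\infty}]\otimes_{\ZZ_{p}}\ZZ_{p^{2}}$ is still isoclinic of slope $1/2$, hence the image point of $\overline{\theta}$ is supersingular. Concretely, I would phrase this via the morphism $\calM_{\Dr}\to\calM$ constructed just before the lemma: the image of $\overline{\rmX}(\rmB)$ under $\overline{\theta}$ is, under the uniformizations, matched with the image of $\calM_{\Dr,\FF}$ in $\calM_{\FF}$, which visibly lies inside the supersingular locus $\overline{\rmX}^{\ss}(\rmQ) = \rmG(\overline{\rmQ})(\QQ)\backslash\calM_{\FF}\times\rmG(\overline{\rmQ})(\Adel^{\infty,p})/\rmK^{p}$, because $\calM$ is the relevant Rapoport--Zink space of supersingular type.

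For part (2), the key observation is already recorded in the paragraph preceding the lemma: if $i\in\{0,1\}$ is a critical index for the Dieudonné lattice $\rmM$ of a point of $\calM_{\Dr}(\FF)$ with respect to the $\calO_{\rmD}$-action, meaning $\rmV\rmM_{i}=\Pi\rmM_{i}$, then $\rmV^{2}\rmM_{i}=\Pi^{2}\rmM_{i}=p\rmM_{i}$, so $i$ is also a critical index for $\rmM$ with respect to the restricted $\ZZ_{p^{2}}$-action. Therefore the morphism $\calM_{\Dr}\to\calM$ carries $\calM^{\circ}_{\Dr}$ into $\calM^{\circ}$ and $\calM^{\bullet}_{\Dr}$ into $\calM^{\bullet}$. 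Passing through the two uniformization isomorphisms (which, by Propositions \ref{ss-locus}(1) and \ref{curve-red}(1), are compatible with the prime-to-$p$ adelic coordinates in the obvious way since $\overline{\theta}$ at level away from $p$ is just the Hirzebruch--Zagier inclusion $\rmG(\overline{\rmB})\hookrightarrow\rmG(\overline{\rmQ})$ induced by $\overline{\rmB}\hookrightarrow\overline{\rmB}\otimes\rmF=\overline{\rmQ}$), we obtain $\overline{\theta}^{?}\colon\overline{\rmX}^{?}(\rmB)\to\overline{\rmX}^{?}(\rmQ)$ for $?\in\{\circ,\bullet\}$. Finally, restricting to the sets of irreducible components of these $\PP^{1}$-bundles—which are $\rmZ^{?}(\overline{\rmB})$ and $\rmZ^{?}(\overline{\rmQ})$ by Propositions \ref{ss-locus}(2) and \ref{curve-red}(2)—gives the map $\vartheta^{?}\colon\rmZ^{?}(\overline{\rmB})\to\rmZ^{?}(\overline{\rmQ})$. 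That this is exactly the canonical embedding is a matter of unwinding the double-coset descriptions: both are $\rmG(\cdot)(\QQ)\backslash\rmG(\cdot)(\Adel^{\infty})/\rmK^{p}\rmK_{p}$, and the map is induced by $\rmG(\overline{\rmB})\hookrightarrow\rmG(\overline{\rmQ})$ together with the compatibility $\overline{\rmK}_{p}=(\calO_{\overline{\rmB}}\otimes\ZZ_{p})^{\times}\subset(\calO_{\overline{\rmQ}}\otimes\ZZ_{p})^{\times}$; injectivity follows for $\rmK^{p}$ sufficiently small, as is standard for Hirzebruch--Zagier type cycles.

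I expect the main obstacle to be the bookkeeping needed to check that the uniformization isomorphisms of Propositions \ref{ss-locus} and \ref{curve-red} are genuinely compatible with $\overline{\theta}$—that is, that the square relating $\calM_{\Dr}\to\calM$ on the local side to $\overline{\rmX}(\rmB)\to\overline{\rmX}(\rmQ)$ on the global side actually commutes. This requires tracing through the Serre tensor construction at the level of Rapoport--Zink data: one must verify that the quasi-isogeny $\rho_{\rmX}$ for a point of $\calM_{\Dr}$ maps, after applying $-\otimes_{\ZZ_{p^{2}}}(\ZZ_{p^{2}}=\calO_{\rmF}\otimes\ZZ_{p})$ and forgetting the full $\calO_{\rmD}$-action down to the $\ZZ_{p^{2}}$-action, to the quasi-isogeny datum defining the corresponding point of $\calM$, and that the framing objects $\XX$ are chosen compatibly. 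Everything else is formal once this compatibility is in place; alternatively, as the final remark after Proposition \ref{ss-locus} suggests, one could bypass Rapoport--Zink entirely and run the argument via the isogeny trick of \cite{TX}, which packages precisely this compatibility, but I would still present the Dieudonné-module computation as the conceptual core since it makes the preservation of the $\circ$/$\bullet$ partition transparent.
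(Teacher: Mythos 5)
Your proposal is correct and takes essentially the same route as the paper: part (1) is deduced from the Rapoport--Zink and Cerednik--Drinfeld uniformizations of Propositions \ref{ss-locus}(1) and \ref{curve-red}(1) (your slope-$1/2$/Serre-tensor observation is just a more explicit justification of the same point), and part (2) rests on exactly the compatibility of the two notions of critical index for the morphism $\calM_{\Dr}\rightarrow\calM$ discussed before the lemma, which is what the paper's proof invokes. The local--global compatibility bookkeeping you flag as the main obstacle is indeed left implicit in the paper's two-line proof.
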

\begin{proof}
The first part $(1)$ follows from Proposition \ref{ss-locus} $(1)$ and \ref{curve-red} $(1)$.  The second part $(2)$ follows from the previous discussions on the compatibilities of the two notions of critical indices. Note a similar result is obtained in \cite{Lan}.
\end{proof}

\subsection{Tate cycles on the quaternionic Shimura surface}
Suppose that $\pi^{\rmQ}$ is a cuspidal automorphic representation of $\rmG(\rmQ)$ of weight $(2, 2)$ and trivial central character.  We will always assume that $\pi^{\rmQ}$ is \emph{non-dihedral} and is defined over $\rmE_{\lambda}$. Suppose that $\pi^{\rmQ}$ appears in the middle degree cohomology  
\begin{equation*}
\rmH^{2}(\rmX(\rmQ)\otimes\overline{\QQ}, \rmE_{\lambda})=\bigoplus\limits_{\Pi^{\rmQ}}\rmH^{2}(\rmX(\rmQ)\otimes\overline{\QQ}, \rmE_{\lambda})[\Pi^{\rmQ}]\otimes (\Pi^{\rmQ\infty})^{\rmK_{\rmN^{+},d}}
\end{equation*}
of $\rmX(\rmQ)\otimes\overline{\QQ}$.  We can associate to $\pi^{\rmQ}$ a two dimensional $\lambda$-adic representation  $(\rho_{\pi^{\rmQ},\lambda}, \rmV_{\pi^{\rmQ}})$ of $\rmG_{\rmF}$ using the construction of Blasius--Rogawski \cite{BR} and Taylor \cite{Tay}. 

The Asai representation associated to the representation $\rmV_{\pi^{\rmQ}}$ will be denoted by
\begin{equation*}
(\mathrm{As}(\rho_{\pi^{\rmQ},\lambda}), \mathrm{As}(\rmV_{\pi^{\rmQ}})),
\end{equation*}
this is a representation of $\rmG_{\QQ}$ isomorphic to the tensor induction $\otimes\mathrm{Ind}^{\rmG_{\QQ}}_{\rmG_{\rmF}}\rmV_{\pi^{\rmQ}}$ of $\rmV_{\pi^{\rmQ}}$ from $\rmG_{\rmF}$ to $\rmG_{\rmQ}$. It will be important for us to note that  $(\mathrm{As}(\rho_{\pi^{\rmQ},\lambda})(-1), \mathrm{As}(\rmV_{\pi^{\rmQ}})(-1))$ is realized on the cohomology $\rmH^{2}(\rmX(\rmQ)\otimes\overline{\QQ}, \rmE_{\lambda}(1))$ given by the same construction as in \cite{BL}.

\begin{lemma}\label{iso-asai}
The $\lambda$-adic Galois representation 
\begin{equation*}
\rmH^{2}_{\pi^{\rmQ}}(\rmX(\rmQ)\otimes{\overline{\QQ}}, \rmE_{\lambda}(1))=\rmH^{2}(\rmX(\rmQ)\otimes\overline{\QQ}, \rmE_{\lambda}(1))[\pi^{\rmQ}] 
\end{equation*}
is isomorphic to $m(\pi^{\rmQ},d)$ copies of $\mathrm{As}(\rmV_{\pi^{\rmQ}})(-1)$ for some $m(\pi^{\rmQ},d)\geq 1$ .
\end{lemma}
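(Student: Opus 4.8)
The plan is to identify the $\pi^{\rmQ}$-isotypic part of $\rmH^{2}(\rmX(\rmQ)\otimes\overline{\QQ},\rmE_{\lambda}(1))$ with a sum of copies of the Asai module by combining three ingredients: (i) the Matsushima-type decomposition of the middle cohomology of the quaternionic Shimura surface $\mathrm{Sh}_{\rmK}(\rmQ)$ into isotypic pieces indexed by automorphic representations $\Pi^{\rmQ}$, already recalled in the displayed formula preceding the statement; (ii) the fact, quoted from Blasius--Rogawski \cite{BR}, Taylor \cite{Tay} and the construction of \cite{BL}, that the two-dimensional $\rmG_{\rmF}$-representation $\rmV_{\pi^{\rmQ}}$ has the property that its tensor induction $\As(\rmV_{\pi^{\rmQ}})(-1)$ is the Galois representation occurring in the $\pi^{\rmQ}$-part; and (iii) the multiplicity-one / irreducibility input coming from the assumption that $\pi^{\rmQ}$ is non-dihedral, which forces the Galois action on each two-dimensional constituent to be irreducible and hence pins down the $\rmG_{\QQ}$-module structure up to isomorphism. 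Concretely, first I would restrict the big decomposition to the single summand $\rmH^{2}(\rmX(\rmQ)\otimes\overline{\QQ},\rmE_{\lambda}(1))[\pi^{\rmQ}]\otimes(\pi^{\rmQ\infty})^{\rmK_{\rmN^{+},d}}$, observe that the finite-part multiplicity space $(\pi^{\rmQ\infty})^{\rmK_{\rmN^{+},d}}$ has some dimension $m(\pi^{\rmQ},d)$, and note it is nonzero precisely because $\pi^{\rmQ}$ was assumed to appear in $\rmH^{2}$, giving $m(\pi^{\rmQ},d)\geq 1$.

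The core step is to show $\rmH^{2}(\rmX(\rmQ)\otimes\overline{\QQ},\rmE_{\lambda}(1))[\pi^{\rmQ}]\cong\As(\rmV_{\pi^{\rmQ}})(-1)$ as $\rmG_{\QQ}$-modules. Here I would invoke the standard description of the cohomology of quaternionic Hilbert--Blumenthal (or Hilbert--Blumenthal) surfaces: after passing to the PEL cover $\mathrm{Sh}_{\widetilde{\rmK}}(\rmQ)$ one knows by Morita equivalence with the $\GSpin$-type variety of \cite{KR1} and by \cite{BL} that the $\pi^{\rmQ}$-part of $\rmH^{2}$ is, up to twist, the Asai (tensor-induced) representation attached to the $\GL_2/\rmF$ representation transferred from $\pi^{\rmQ}$. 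The non-dihedral hypothesis guarantees that $\As(\rmV_{\pi^{\rmQ}})$ is irreducible as a $\rmG_{\QQ}$-representation (an Asai representation of a two-dimensional representation is reducible exactly when the underlying representation is induced from a quadratic extension, i.e.\ dihedral), so there is no ambiguity in reading off the isotypic piece: it is a direct sum of copies of this single irreducible module, and the number of copies is exactly the dimension $m(\pi^{\rmQ},d)$ of the finite-part multiplicity space appearing in the decomposition. The Tate twist by $(-1)$ is bookkeeping to match the normalization in which $\rmH^{2}(\rmX(\rmQ)\otimes\overline{\QQ},\rmE_{\lambda}(1))$ is used throughout.

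The main obstacle I anticipate is not conceptual but one of careful referencing and normalization: one must make sure that the "same construction as in \cite{BL}" genuinely identifies the geometric $\pi^{\rmQ}$-part of $\rmH^{2}$ of $\rmX(\rmQ)$ — as opposed to the PEL model $\widetilde{\rmX}(\rmQ)$ or the $\GSpin$ variety — with $\As(\rmV_{\pi^{\rmQ}})(-1)$, and that the contributions from other degrees (via the Künneth/Leray pieces for a surface, e.g.\ Tate classes from Hirzebruch--Zagier divisors, or the invariant part of $\rmH^1\otimes\rmH^1$) do not pollute the cuspidal non-dihedral $\pi^{\rmQ}$-part. Since $\pi^{\rmQ}$ is cuspidal, non-dihedral, of regular weight $(2,2)$, these boundary and Eisenstein contributions cannot carry $\pi^{\rmQ}$, so the isotypic projection kills them; making this precise is the only place where a short argument (rather than a citation) is needed. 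Once that is in hand, the isomorphism with $m(\pi^{\rmQ},d)$ copies of $\As(\rmV_{\pi^{\rmQ}})(-1)$ follows, with $m(\pi^{\rmQ},d)=\dim(\pi^{\rmQ\infty})^{\rmK_{\rmN^{+},d}}\geq 1$.
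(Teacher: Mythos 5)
There is a genuine gap in your core step. You claim that the non-dihedral hypothesis makes $\As(\rmV_{\pi^{\rmQ}})$ irreducible as a $\rmG_{\QQ}$-representation, asserting that the Asai representation of a two-dimensional representation is reducible exactly when that representation is dihedral. This is false: the Asai (tensor-induced) representation also decomposes whenever $\rmV_{\pi^{\rmQ}}^{\sigma}\cong\rmV_{\pi^{\rmQ}}$, in particular whenever $\pi^{\rmQ}$ is a base change from $\QQ$, since then $\As(\rmV_{\pi^{\rmQ}})\vert_{\rmG_{\rmF}}\cong\rmV_{\pi^{\rmQ}}\otimes\rmV_{\pi^{\rmQ}}=\mathrm{Sym}^{2}\oplus\wedge^{2}$ and this splitting descends to $\rmG_{\QQ}$. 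That is precisely the situation of this paper: later on one uses $\mathrm{As}(\rmV_{\pi^{\rmQ}})(-1)\cong\mathrm{Sym}^{2}(\rmV_{\pi^{\circ}})(-1)\oplus\rmE_{\lambda}(\omega_{\rmF/\QQ})$, so in the case of interest the Asai module is manifestly reducible even though $\pi^{\rmQ}$ is non-dihedral. Hence irreducibility cannot be what "pins down" the $\rmG_{\QQ}$-module structure of the isotypic piece.

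Once irreducibility is removed, your argument is missing the two real inputs. First, one needs to identify the semisimplification of $\rmH^{2}(\rmX(\rmQ)\otimes\overline{\QQ},\rmE_{\lambda}(1))[\pi^{\rmQ}]$ with copies of $\As(\rmV_{\pi^{\rmQ}})(-1)$, which is done by comparing Frobenius traces via the Eichler--Shimura congruence relation and Chebotarev (this is the content of the argument of \cite[Lemma 3.9]{Liu-HZ} that the paper transplants, building on \cite{BL}, \cite{BR}, \cite{Tay}). Second, and crucially, one must know that this cohomology is a semisimple Galois module in order to upgrade an isomorphism of semisimplifications to an isomorphism of modules; the paper quotes Nekov\'a\v{r}'s semisimplicity theorem for quaternionic Shimura varieties \cite{Nekovar2} for exactly this. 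Note that even if $\As(\rmV_{\pi^{\rmQ}})$ were irreducible your plan would still need this input, since a module whose semisimplification is $\rmW^{\oplus m}$ with $\rmW$ irreducible can be a non-split self-extension. Your identification $m(\pi^{\rmQ},d)=\dim(\pi^{\rmQ\infty})^{\rmK_{\rmN^{+},d}}$ and the remark that boundary or Eisenstein contributions do not interfere (the surface is compact here) are fine, but the argument as proposed does not establish the lemma without the trace comparison and the semisimplicity theorem.
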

\begin{proof}
The same proof for \cite[Lemma 3.9]{Liu-HZ} works here for our quaternionic Hilbert--Blumenthal surface as well noting that $\rmH^{2}_{\pi^{\rmQ}}(\rmX(\rmQ)\otimes{\overline{\QQ}}, \rmE_{\lambda}(1))$ is a semi-simple Galois module by \cite{Nekovar2}. 
\end{proof}

Let $\TT_{\rmF}=\TT^{\frakd\frakn}_{\rmF}$ be the Hecke algebra for the group $\rmG(\rmQ)$ consisting of prime-to-$\frakd\frakn$ Hecke operators where $\frakn$ is the ideal $\rmN\calO_{\rmF}$ and $\frakd$ is the ideal $d\calO_{\rmF}$. The associated Hecke eigensystem of $\pi^{\rmQ}$ are given by the traces of the Frobenius elements of the representation $\overline{\rho}_{\pi^{\rmQ}}$.  Then the traces of the Frobenius elements of the residual representation $\overline{\rho}_{\pi^{\rmQ}}$ of ${\rho}_{\pi^{\rmQ}}$  defines a morphism 
\begin{equation*}
\psi_{\pi^{\rmQ}}: \TT_{\rmF}\rightarrow k_{\lambda} \phantom{aaa} \rmT_{v}\mapsto \mathrm{Tr}\overline{\rho}_{\pi^{\rmQ}}(\mathrm{Fr}_{v}).
\end{equation*}
We define
\begin{equation}
\frakm_{\rmF} =\mathrm{ker}(\psi_{\pi^{\rmQ}}: \TT_{\rmF}\rightarrow k_{\lambda})
\end{equation}
to be the maximal ideal given by the kernel of this morphism and call it the maximal ideal associated to $\overline{\rho}_{\pi^{\rmQ}}$. 

Now we will consider the cohomology of $\rmX(\rmQ)\otimes\overline{\QQ}$ with integral coefficients. To simplify the notations, we will set 
\begin{equation*}
\rmH^{i}_{\pi^{\rmQ}}(\rmX(\rmQ)\otimes\overline{\QQ}, \calO_{\lambda}(j))=\rmH^{i}(\rmX(\rmQ)\otimes\overline{\QQ}, \calO_{{\lambda}}(j))_{\frakm_{\rmF}} 
\end{equation*}
for integers $i, j\geq 0$ and call it the $\pi^{\rmQ}$-isotypic component of $\rmH^{i}(\rmX(\rmQ)\otimes\overline{\QQ}, \calO_{{\lambda}}(j))$. In particular,  $\rmH^{2}_{\pi^{\rmQ}}(\rmX(\rmQ)\otimes\overline{\QQ}, \calO_{\lambda}(1))$ defines an integral lattice in $\mathrm{As}(\rmV_{\pi^{\rmQ}})(-1)$ which we will denote by $\mathrm{As}(\rmT_{\pi^{\rmQ}})(-1)$ which in turn determines an integral lattice $\rmT_{\pi^{\rmQ}}$ of $\rmV_{\pi^{\rmQ}}$.

\begin{definition}
We say the number $d$ that appears in the level structure for $\rmX(\rmQ)$ is {\em clean} for $\rmT_{\pi^{\rmQ}}$ if 
\begin{equation*}
\rmH^{2}_{\pi^{\rmQ}}(\rmX(\rmQ)\otimes\overline{\QQ}, \calO_{\lambda}(1))=m(\pi^{\rmQ},d)\mathrm{As}(\rmT_{\pi^{\rmQ}})(-1)
\end{equation*}
with $m(\pi^{\rmQ},d)$ defined in Lemma \ref{iso-asai}. 
\end{definition}

This terminology is first introduced in \cite{Liu-cubic} from a slightly different situation. Intuitively, this condition means that the $d$-new forms occurring on $\rmH^{2}(\rmX(\rmQ)\otimes\overline{\QQ}, \calO_{\lambda}(1))$ will not be congruent to the $d$-old forms.  From here on, we will always choose $d$ to be clean for $\rmT_{\pi^{\rmQ}}$.  Note that by the proper base change theorem, we can identify
\begin{equation*}
\rmH^{2}_{\pi^{\rmQ}}(\overline{\rmX}(\rmQ)\otimes\overline{\FF}_{p}, \calO_{\lambda}(1))=\rmH^{2}(\overline{\rmX}(\rmQ)\otimes\overline{\FF}_{p}, \calO_{{\lambda}}(1))_{\frakm_{\rmF}} 
\end{equation*}
with $\rmH^{2}_{\pi^{\rmQ}}(\rmX(\rmQ)\otimes\overline{\QQ}_{p}, \calO_{\lambda}(1))$ and thus with $m(\pi^{\rmQ}, d)$-copies of $\mathrm{As}(\rmT_{\pi^{\rmQ}})(-1)\vert_{\rmG_{\QQ_{p}}}$.

Let $\pi^{\overline{\rmQ}}$ be the Jacquet-Langlands transfer of $\pi^{\rmQ}$ to $\rmG(\overline{\rmQ})$. We define similarly the $\pi^{\overline{\rmQ}}$-isotypic component of the cohomology $\rmH^{0}(\rmZ^{?}(\overline{\rmQ}), \calO_{\lambda})=\calO_{\lambda}[\rmZ^{?}(\overline{\rmQ})]$ of the Shimura set $\rmZ(\overline{\rmQ})$ by
\begin{equation*}
\calZ^{?}_{\pi^{\overline{\rmQ}}}(\overline{\rmQ})=\calO_{\lambda}[\rmZ^{?}(\overline{\rmQ})]_{\frakm_{\rmF}}
\end{equation*}
for $?\in \{\circ, \bullet\}$. If we would like to identify these spaces for $?\in\{\circ, \bullet\}$, then we will omit the superscript $?\in\{\circ,\bullet\}$ from the notations. The description of the supersingular locus of $\overline{\rmX}(\rmQ)$ in Lemma \ref{ss-locus} and the cycle class map provides the following map 
\begin{equation*}
\mathcal{C}: \calO_{\lambda}[\rmZ^{\circ}(\overline{\rmQ})]\oplus \calO_{\lambda}[\rmZ^{\bullet}(\overline{\rmQ})]  \rightarrow \rmH^{2}(\overline{\rmX}(\rmQ)\otimes\overline{\FF}_{p}, \calO_{\lambda}(1))^{\mathrm{Fr}_{p^{2}}}
\end{equation*}
which can be considered as a geometric realization of the Jacquet--Langlands correspondence between the group $\rmG(\rmQ)$ and $\rmG(\overline{\rmQ})$.
\begin{definition}
We say the prime $p$ is \emph{admissible} for $\rmT_{\pi^{\rmQ}}$ if its associated Galois representation satisfies the following condition
\begin{enumerate}
\item $p$ is an inert prime in $\rmF$;
\item $a_{p^{2}}(\pi^{\rmQ}) \not\in\{2p, -2p, -p^{2}-1, p^{2}+1\}$ where $a_{p^{2}}(\pi^{\rmQ})=\mathrm{tr}(\mathrm{Fr}_{p^{2}}, \rmT_{\pi^{\rmQ}})$.
\end{enumerate}
We say the prime $p$ is $n$-\emph{admissible} for $\rmT_{\pi^{\rmQ}}$ if it is admissible and 
\begin{equation*}
a_{p^{2}}(\pi^{\rmQ}) \not\in\{2p, -2p, -p^{2}-1, p^{2}+1\} \mod \lambda^{n}.
\end{equation*}
\end{definition}

We now record the following key fact on the Tate conjecture for the special fiber of the quaternionic Shimura surface $\rmX(\rmQ)$ proved in \cite{TX}  which directly inspires our construction of the Flach system on $\rmX(\rmQ)$. 

\begin{proposition}\label{Tate}
Suppose $p$ is an admissible prime for $\rmT_{\pi^{\rmQ}}$. Then the restriction of the map $\mathcal{C}$ induces an isomorphism
\begin{equation*}
\mathcal{C}_{\pi^{\rmQ}}: \calZ^{\circ}_{\pi^{\overline{\rmQ}}}(\overline{\rmQ})\oplus \calZ^{\bullet}_{\pi^{\overline{\rmQ}}}(\overline{\rmQ})  \xrightarrow{\sim} \rmH^{2}_{\pi^{\rmQ}}(\overline{\rmX}(\rmQ)\otimes\overline{\FF}_{p}, \calO_{\lambda}(1))^{\mathrm{Fr}_{p^{2}}}.
\end{equation*}
Moreover, the action of the Frobenius element of $\Gal(\FF_{p^{2}}/\FF_{p})$  on the right hand switches the two factors on the left hand side.
\end{proposition}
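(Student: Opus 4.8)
The plan is to obtain the proposition from the Tate conjecture for the special fibre $\overline{\rmX}(\rmQ)$ established in \cite{TX}, after translating the admissibility of $p$ into the genericity hypothesis required there and after identifying the two sides of $\mathcal{C}$ $\frakm_{\rmF}$-adically. By the proper base change identification recalled just above the statement, $\rmH^{2}_{\pi^{\rmQ}}(\overline{\rmX}(\rmQ)\otimes\overline{\FF}_{p},\calO_{\lambda}(1))$ is $m(\pi^{\rmQ},d)$ copies of $\As(\rmT_{\pi^{\rmQ}})(-1)\vert_{\rmG_{\QQ_{p}}}$, a free $\calO_{\lambda}$-module, while the irreducible components of $\overline{\rmX}^{\ss}(\rmQ)$ are the $\PP^{1}$'s indexed by $\rmZ^{\circ}(\overline{\rmQ})\sqcup\rmZ^{\bullet}(\overline{\rmQ})$ of Proposition \ref{ss-locus}, each defined over $\FF_{p^{2}}$, so that $\mathcal{C}$ indeed takes values in the $\mathrm{Fr}_{p^{2}}$-invariants. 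Writing $\mu_{1},\mu_{2}$ for the eigenvalues of $\mathrm{Fr}_{p^{2}}$ on $\rmT_{\pi^{\rmQ}}$, so that $\mu_{1}\mu_{2}=p^{2}$ (trivial central character and weight $(2,2)$) and $\mu_{1}+\mu_{2}=a_{p^{2}}(\pi^{\rmQ})$, the operator $\mathrm{Fr}_{p^{2}}$ acts on $\As(\rmT_{\pi^{\rmQ}})(-1)$ with eigenvalues $\{1,1,\mu_{1}/\mu_{2},\mu_{2}/\mu_{1}\}$, and a short computation shows that the four excluded values $\{2p,-2p,p^{2}+1,-p^{2}-1\}$ of $a_{p^{2}}(\pi^{\rmQ})$ are precisely those with $\mu_{1}/\mu_{2}\in\{1,p^{2},p^{-2}\}$. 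Hence, for $p$ admissible, the $\mathrm{Fr}_{p^{2}}$-fixed part of $\As(\rmT_{\pi^{\rmQ}})(-1)$ is a rank-two $\calO_{\lambda}$-direct summand (saturation being automatic over the discrete valuation ring $\calO_{\lambda}$), so $\rmH^{2}_{\pi^{\rmQ}}(\overline{\rmX}(\rmQ)\otimes\overline{\FF}_{p},\calO_{\lambda}(1))^{\mathrm{Fr}_{p^{2}}}$ is free of rank $2\,m(\pi^{\rmQ},d)$.

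For the source of $\mathcal{C}_{\pi^{\rmQ}}$: since $\rmB$ and $\overline{\rmB}$ differ only at $p$ and $\infty$ and $p$ is inert in $\rmF$, the algebras $\rmQ=\rmB\otimes\rmF$ and $\overline{\rmQ}=\overline{\rmB}\otimes\rmF$ have the same invariants at every finite place of $\rmF$, so $\rmG(\rmQ)(\Adel^{\infty})=\rmG(\overline{\rmQ})(\Adel^{\infty})$; together with the matching of tame levels and the Jacquet--Langlands correspondence this yields $\dim_{\calO_{\lambda}}\calZ_{\pi^{\overline{\rmQ}}}(\overline{\rmQ})=m(\pi^{\rmQ},d)$, so $\calZ^{\circ}_{\pi^{\overline{\rmQ}}}(\overline{\rmQ})\oplus\calZ^{\bullet}_{\pi^{\overline{\rmQ}}}(\overline{\rmQ})$ is free of rank $2\,m(\pi^{\rmQ},d)$ as well. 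Now one invokes the main theorem of \cite{TX}: localized at the non-Eisenstein ideal $\frakm_{\rmF}$, the classes of the supersingular $\PP^{1}$'s span the Tate classes in $\rmH^{2}(\overline{\rmX}(\rmQ)\otimes\overline{\FF}_{p},\calO_{\lambda}(1))$ and are $\calO_{\lambda}$-linearly independent with saturated span. Concretely this is read off from the excision sequence for $\overline{\rmX}^{\ss}(\rmQ)\subset\overline{\rmX}(\rmQ)$ with open complement $\rmU=\overline{\rmX}(\rmQ)\setminus\overline{\rmX}^{\ss}(\rmQ)$, in which the genericity of the $\mathrm{Fr}_{p^{2}}$-eigenvalues provided by admissibility makes the boundary maps vanish on the $\pi^{\rmQ}$-isotypic $\mathrm{Fr}_{p^{2}}$-invariant part --- the inequality $a_{p^{2}}(\pi^{\rmQ})\neq\pm2p$ securing the expected rank of the invariant part and $a_{p^{2}}(\pi^{\rmQ})\neq\pm(p^{2}+1)$ killing the contribution of $\rmH^{\ast}(\rmU)$. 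Granting this, $\mathcal{C}_{\pi^{\rmQ}}$ is a surjection of free $\calO_{\lambda}$-modules of equal rank $2\,m(\pi^{\rmQ},d)$ with saturated image, hence an isomorphism.

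For the Frobenius equivariance: on the Dieudonn\'e modules $\rmM=\rmM_{0}\oplus\rmM_{1}$ parametrizing points of $\calM(\FF)$, the $\ZZ_{p^{2}}$-grading is interchanged by the nontrivial automorphism of $\QQ_{p^{2}}/\QQ_{p}$, so the generator $\mathrm{Fr}_{p}$ of $\Gal(\FF_{p^{2}}/\FF_{p})$ swaps the loci of critical index $0$ and $1$, hence the families $\overline{\rmX}^{\circ}(\rmQ)$ and $\overline{\rmX}^{\bullet}(\rmQ)$ and their parametrizing sets $\rmZ^{\circ}(\overline{\rmQ}),\rmZ^{\bullet}(\overline{\rmQ})$; by compatibility of the cycle class map with the Galois action and the effectivity over $\FF_{p^{2}}$ of the Weil descent datum of Proposition \ref{ss-locus}(1), this interchange is carried through $\mathcal{C}_{\pi^{\rmQ}}$, which gives the last assertion.

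The step I expect to be the real obstacle is extracting the \emph{integral} isomorphism from \cite{TX} and aligning hypotheses: one must check that ``$p$ admissible'' is exactly the genericity condition under which \cite{TX} proves the Tate conjecture for $\overline{\rmX}(\rmQ)$, control $\calO_{\lambda}$-torsion in the cokernel of $\mathcal{C}$ --- where the cleanliness of $d$ and an integral refinement of the excision argument re-enter, and where the non-smoothness of $\overline{\rmX}^{\ss}(\rmQ)$ must be handled through the intersection pairing on its components --- and verify the multiplicity identity $\dim_{\calO_{\lambda}}\calZ_{\pi^{\overline{\rmQ}}}(\overline{\rmQ})=m(\pi^{\rmQ},d)$ used above.
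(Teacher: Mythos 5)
Your proposal follows essentially the same route as the paper: the paper's entire proof is a citation of the main theorem of \cite{TX} (its \S 1.1 covers this case) together with the observation, borrowed from \cite{Liu-HZ}, that the admissibility condition $a_{p^{2}}(\pi^{\rmQ})\not\equiv \pm 2p \pmod{\lambda}$ permits the upgrade to integral $\calO_{\lambda}$-coefficients, and your eigenvalue computation on $\mathrm{As}(\rmT_{\pi^{\rmQ}})(-1)$ and the Frobenius-swap of the $\circ$/$\bullet$ components are consistent with that. The step you single out as the real obstacle --- extracting the integral isomorphism and the multiplicity bookkeeping --- is precisely what the paper disposes of by the citation to \cite{TX} and \cite{Liu-HZ} rather than by any new argument, so your expanded sketch is in substance the same proof.
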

\begin{proof}
This follows from the main result of \cite{TX}, see in particular section $1.1$ of \cite{TX} for this particular case. Note as explained in \cite{Liu-HZ}, we can upgrade the result of \cite{TX} to integral coefficient using the assumption that $\mathrm{tr}(\mathrm{Fr}_{p^{2}}, \rmT_{\pi^{\rmQ}}) \mod\lambda \not\in\{2p, -2p\}$.
\end{proof}
\section{Flach classes and reciprocity formula}
\subsection{Flach class of the quaternionic Shimura surface} 
Let $\rmX$ be a proper smooth variety of finite type over a field $\rmK$. For an integer $d$, consider the complex
\begin{equation}\label{M-complex}
\bigoplus_{x\in \rmX^{d-1}}\rmK_{2}k(x)\rightarrow \bigoplus_{x\in \rmX^{d}}k(x)^{\times}\xrightarrow{d_{1}} \bigoplus_{x\in\rmX^{d+1}}\ZZ
\end{equation}
where $\rmX^{i}$ denotes the set of points of codimension $i$ on the variety $\rmX$, $\rmK_{2}k(x)$ is Milnor K-group of the field $k(x)$, the first map is the so-called tame symbol map and the second is given by the divisor map. The \emph{motivic cohomology group} of $\rmX$
\begin{equation*}
\rmH^{2d+1}_{\calM}(\rmX, \ZZ(d+1))
\end{equation*}
is defined to be the cohomology of this complex: elements of $\rmH^{2d+1}_{\calM}(\rmX, \ZZ(d+1))$ are represented by the formal sums $\sum_{i}(\rmZ_{i}, f_{i})$ of pairs consisting of a codimension $d$ cycle $\rmZ_{i}$ on $\rmX$ and a non-zero rational function $f_{i}$ on $\rmZ_{i}$ such that $\sum_{i}\mathrm{div}_{\rmZ_{i}}(f_{i})=0$ as a Weil divisor on $\rmZ_{i}$. This group is also known as the higher Chow group $\mathrm{CH}^{d}(\rmX, 1)$ of $\rmX$ in the literature. There is a Chern character map
\begin{equation}\label{chern}
\mathrm{ch}: \rmH^{2d+1}_{\calM}(\rmX, \ZZ(d+1))\rightarrow \rmH^{2d+1}(\rmX, \ZZ_{l}(d+1))
\end{equation}
given by the coniveau spectral sequences in K-theory and in \'etale cohomology. 

Next suppose that $\calO$ is a complete local ring with fraction field $K$ and residue field $k$ of characteristic $p$ different from $l$. Let $\interX$ be a proper regular scheme over $\calO$, let $\rmX$ be its generic fiber and $\overline{\rmX}$ be its special fiber. The motivic cohomology $\rmH^{2d}_{\calM}(\overline{\rmX}, \ZZ(d))$ in this case agree with the usual Chow group $\mathrm{CH}^{d}(\overline{\rmX})$. There is a map
\begin{equation}\label{div}
\mathrm{div}: \rmH^{2d+1}_{\calM}(\rmX, \ZZ(d+1))\rightarrow \rmH^{2d}_{\calM}(\overline{\rmX}, \ZZ(d))
\end{equation}
defined by sending a pair $(\rmZ, f)$ to the divisor of $f$ on the closure $\mathcal{Z}$ of $\rmZ$ in $\interX$. Note that this divisor is entirely supported on the special fiber $\overline{\calZ}$ of  $\calZ$.

From the definition, an element of the motivic cohomology of $ \rmH^{3}_{\calM}(\rmX(\rmQ)\otimes{\QQ},\ZZ(2))$ can be represented by a curve on the surface $\rmX(\rmQ)\otimes{\QQ}$ and a rational function on this curve with trivial Weil divisors. We will consider the  element  represented by the pair
\begin{equation*}
\Theta^{[p]}=(\theta_{\ast}\rmX(\rmB)\otimes{\QQ}, p)
\end{equation*}
where $\theta_{\ast}\rmX(\rmB)\otimes\QQ$ is the image of $\rmX(\rmB)\otimes\QQ$ under the Hirzebruch--Zagier morphism $\theta$ and we will refer to this element as the {\em Flach element} in the motivic cohomology of the quaternionic Hilbert--Blumenthal surface $\rmX(\rmQ)\otimes\QQ$. 

\begin{proposition}\label{CT}
Suppose that the residual Galois representation $\overline{\rho}_{\pi^{\rmQ}}$ attached to $\pi^{\rmQ}$ has non-solvable image. Then the $\pi^{\rmQ}$-isotypic component 
\begin{equation*}
\rmH^{i}_{\pi^{\rmQ}}(\rmX(\rmQ)\otimes{\overline{\QQ}}, \calO_{\lambda})=\rmH^{i}(\rmX(\rmQ)\otimes{\overline{\QQ}}, \calO_{{\lambda}})_{\frakm_{\rmF}} 
\end{equation*}
of $\rmH^{i}(\rmX(\rmQ)\otimes{\overline{\QQ}}, \calO_{\lambda})$ vanishes unless $i=2$.
\end{proposition}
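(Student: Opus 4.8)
The plan is to compute the full $\pi^{\rmQ}$-isotypic part of the étale cohomology of the surface $\rmX(\rmQ)\otimes\overline{\QQ}$ degree by degree and show it is concentrated in the middle. First I would pass to rational coefficients and argue via the Matsushima-type decomposition (or the work of Brylinski--Labesse \cite{BL} and Nekov\'a\v{r} \cite{Nekovar2} already cited) that $\rmH^{i}(\rmX(\rmQ)\otimes\overline{\QQ},\rmE_{\lambda})[\pi^{\rmQ}]$ is built from the automorphic representation $\pi^{\rmQ}$: since $\pi^{\rmQ}$ is cuspidal of cohomological weight $(2,2)$, non-dihedral, with trivial central character, its contribution to $(\mathfrak{g},K)$-cohomology of the relevant locally symmetric space is concentrated in the middle degree $i=2$, where it realizes the Asai representation $\mathrm{As}(\rmV_{\pi^{\rmQ}})(-1)$ of Lemma \ref{iso-asai}. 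The key input is that the non-solvable image hypothesis on $\overline{\rho}_{\pi^{\rmQ}}$ forces $\pi^{\rmQ}$ to be non-CM/non-dihedral, so no cohomology in degrees $0,1,3,4$ can carry the eigensystem $\psi_{\pi^{\rmQ}}$: degrees $0$ and $4$ are spanned by (powers of) the cycle class of a point and are Eisenstein/trivial, hence killed after localizing at the non-Eisenstein maximal ideal $\frakm_{\rmF}$, while degrees $1$ and $3$ would have to come from a one-dimensional piece or from $\rmH^1$ of a curve quotient, contradicting the two-dimensionality and irreducibility forced by the large-image hypothesis.

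Concretely, I would proceed as follows. Step 1: reduce the integral statement to the rational one — by universal coefficients and the fact that $\calO_{\lambda}$-cohomology localized at $\frakm_{\rmF}$ is torsion-free in the relevant range (using that $\overline{\rho}_{\pi^{\rmQ}}$ is absolutely irreducible, a consequence of non-solvable image), so that $\rmH^{i}_{\pi^{\rmQ}}(\rmX(\rmQ)\otimes\overline{\QQ},\calO_{\lambda})\otimes\rmE_{\lambda} = \rmH^{i}(\rmX(\rmQ)\otimes\overline{\QQ},\rmE_{\lambda})[\pi^{\rmQ}]$ and vanishing of the latter implies vanishing of the former (no torsion can survive because the mod-$\lambda$ eigensystem $\psi_{\pi^{\rmQ}}$ only appears attached to the Asai-type Galois representation sitting in degree $2$). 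Step 2: invoke the description of $\rmH^*(\rmX(\rmQ)\otimes\overline{\QQ},\rmE_\lambda)$ as a sum over automorphic representations $\Pi^{\rmQ}$, exactly as displayed before Lemma \ref{iso-asai}, and note that the $\frakm_{\rmF}$-localization picks out only those $\Pi^{\rmQ}$ congruent to $\pi^{\rmQ}$ mod $\lambda$; by the large-image hypothesis all such $\Pi^{\rmQ}$ are themselves cuspidal non-dihedral of weight $(2,2)$. Step 3: for each such cuspidal $\Pi^{\rmQ}$, its contribution to $\rmH^i$ is governed by relative Lie algebra cohomology $\rmH^i(\mathfrak{g},K;\Pi^{\rmQ}_\infty)$, which for a discrete series (or its limit) on the product of two copies of $\mathrm{GL}_2$-type groups of real rank $1$ is nonzero only in degree $i=2$ (middle degree). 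Step 4: handle $\rmH^0$ and $\rmH^4$ separately: these are spanned by classes of connected components and top-dimensional cycle classes and carry only the trivial/Eisenstein Hecke eigensystem, which is distinct from $\psi_{\pi^{\rmQ}}$ precisely because $\overline{\rho}_{\pi^{\rmQ}}$ has non-solvable (in particular non-trivial, non-abelian) image; hence $\rmH^{0}_{\pi^{\rmQ}}=\rmH^{4}_{\pi^{\rmQ}}=0$.

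The main obstacle I anticipate is \emph{controlling the boundary/Eisenstein contributions and the failure of properness} — the Shimura surface $\rmX(\rmQ)$ is only quasi-projective when $\rmQ=\mathrm{M}_2(\rmF)$ (the classical Hilbert--Blumenthal case), so in that case one must work with interior cohomology $\rmH^i_!$ and argue that the $\frakm_{\rmF}$-localization of $\rmH^i$ agrees with that of $\rmH^i_!$ because the boundary cohomology is Eisenstein and hence annihilated after localizing at the non-Eisenstein maximal ideal associated to the large-image representation $\overline{\rho}_{\pi^{\rmQ}}$. When $\rmB\otimes\rmF$ is a division algebra the surface is projective and this issue disappears, but one still needs the vanishing of $(\mathfrak{g},K)$-cohomology in the wrong degrees for \emph{all} the automorphic representations surviving localization, which is where the non-solvable image hypothesis does the real work (it rules out one-dimensional pieces, CM forms, and residual/Eisenstein classes in degrees $1$ and $3$). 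I would cite the analogous argument in \cite[Lemma 3.9]{Liu-HZ} and the semisimplicity statement of \cite{Nekovar2} to streamline Steps 2--3, and handle Step 4 by a direct Hecke-eigenvalue comparison (e.g. the eigenvalue of $\rmT_v$ on a trivial-type class is $v+1$ or a root-number twist thereof, which cannot match $\mathrm{tr}\,\overline{\rho}_{\pi^{\rmQ}}(\mathrm{Fr}_v)$ for a Galois representation with non-solvable image, by Chebotarev).
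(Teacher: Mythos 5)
There is a genuine gap, and it sits exactly in your Step 1. The statement is an \emph{integral} one: it asserts the vanishing of $\rmH^{i}(\rmX(\rmQ)\otimes\overline{\QQ},\calO_{\lambda})_{\frakm_{\rmF}}$ for $i\neq 2$, which includes torsion classes. Your reduction to rational coefficients assumes that the localized $\calO_{\lambda}$-cohomology is torsion-free away from (and in) the middle degree, justified by the remark that ``the mod-$\lambda$ eigensystem $\psi_{\pi^{\rmQ}}$ only appears attached to the Asai-type Galois representation sitting in degree $2$.'' That is precisely the content of the proposition, so the argument is circular: a priori there can be torsion classes in $\rmH^{1}$ or $\rmH^{3}$ carrying the eigensystem $\psi_{\pi^{\rmQ}}$ mod $\lambda$ that do not lift to characteristic zero, and neither absolute irreducibility of $\overline{\rho}_{\pi^{\rmQ}}$ nor the Matsushima/$(\mathfrak{g},K)$-cohomology computation for the cuspidal representations contributing to $\rmH^{*}\otimes\rmE_{\lambda}$ says anything about such classes. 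Your Steps 2--4 correctly establish the characteristic-zero concentration (this is classical, via \cite{BL} and the discrete-series computation, and degrees $0,4$ are Eisenstein), but the passage from that to the torsion statement is the hard theorem, not a universal-coefficients formality. Note also that in the paper the implication runs in the opposite direction: concentration in degree $2$ is used to \emph{deduce} torsion-freeness of $\rmH^{2}_{\pi^{\rmQ}}$, so you cannot feed torsion-freeness in as an input.

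The paper's proof is a direct citation of the main theorem of Caraiani--Tamiozzo \cite[Theorem 7.5.2]{CT}, which proves exactly this torsion concentration for quaternionic Hilbert modular varieties under the non-solvable image hypothesis; the proof there is geometric (in the Caraiani--Scholze style, via the Hodge--Tate period map and Igusa varieties), and no argument of the purely automorphic type you outline is known to yield the integral statement. If you want to salvage your write-up, the honest structure is: quote \cite{CT} for the torsion vanishing, and keep your Steps 2--4 only as motivation or as a proof of the weaker rational statement.
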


\begin{proof}
This is the main theorem of \cite{CT}, see \cite[Theorem 7.5.2]{CT}.
\end{proof}

\begin{lemma}
Assume that the residual Galois representation $\overline{\rho}_{\pi^{\rmQ}}$ attached to $\pi^{\rmQ}$ has non-solvable image. Then we have an isomorphism 
\begin{equation*}
\rmH^{3}(\rmX(\rmQ), \calO_{\lambda}(2))_{\frakm_{\rmF}}=\rmH^{1}(\QQ, \rmH^{2}_{\pi^{\rmQ}}(\rmX(\rmQ)\otimes\overline{\QQ}, \calO_{\lambda}(2))).
\end{equation*}
\end{lemma}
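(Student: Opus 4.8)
The plan is to read off the isomorphism from the Hochschild--Serre spectral sequence for the cover $\rmX(\rmQ)\otimes\overline{\QQ}\to \rmX(\rmQ)\otimes\QQ$, combined with the fact, supplied by Proposition \ref{CT}, that after localisation at $\frakm_{\rmF}$ the \'etale cohomology of $\rmX(\rmQ)\otimes\overline{\QQ}$ is concentrated in degree $2$.

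First I would write down the Hochschild--Serre spectral sequence
\begin{equation*}
E_2^{p,q}=\rmH^p\!\left(\QQ,\ \rmH^q(\rmX(\rmQ)\otimes\overline{\QQ},\calO_{\lambda}(2))\right)\ \Longrightarrow\ \rmH^{p+q}(\rmX(\rmQ)\otimes\QQ,\calO_{\lambda}(2)),
\end{equation*}
a first-quadrant spectral sequence which is bounded because the geometric \'etale cohomology of the surface $\rmX(\rmQ)$ vanishes above degree $4$. The prime-to-$\frakd\frakn$ Hecke operators that generate $\TT_{\rmF}$ are induced by correspondences defined over $\QQ$, so they act on the whole spectral sequence compatibly with the $\rmG_{\QQ}$-action on the coefficient modules and with the edge maps. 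Since localisation at the maximal ideal $\frakm_{\rmF}$ is exact it may be performed term by term, producing a spectral sequence of the same shape with
\begin{equation*}
E_2^{p,q}=\rmH^p\!\left(\QQ,\ \rmH^q_{\pi^{\rmQ}}(\rmX(\rmQ)\otimes\overline{\QQ},\calO_{\lambda}(2))\right)\ \Longrightarrow\ \rmH^{p+q}(\rmX(\rmQ)\otimes\QQ,\calO_{\lambda}(2))_{\frakm_{\rmF}}.
\end{equation*}

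Next I would invoke Proposition \ref{CT}: since $\overline{\rho}_{\pi^{\rmQ}}$ has non-solvable image, $\rmH^q_{\pi^{\rmQ}}(\rmX(\rmQ)\otimes\overline{\QQ},\calO_{\lambda})=0$ for $q\neq 2$, and because a Tate twist only alters the Galois action the same vanishing holds for the coefficients $\rmH^q_{\pi^{\rmQ}}(\rmX(\rmQ)\otimes\overline{\QQ},\calO_{\lambda}(2))$. Hence in the localised spectral sequence only the row $q=2$ survives, every differential vanishes, and the sequence degenerates at $E_2$; comparing the terms with $p+q=n$ gives $\rmH^{n}(\rmX(\rmQ)\otimes\QQ,\calO_{\lambda}(2))_{\frakm_{\rmF}}\cong \rmH^{n-2}\!\left(\QQ,\rmH^2_{\pi^{\rmQ}}(\rmX(\rmQ)\otimes\overline{\QQ},\calO_{\lambda}(2))\right)$, and the case $n=3$ is the assertion.

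I do not expect a real obstacle. The only point requiring care is the \emph{geometricity} of the Hecke action---that it is realised by $\QQ$-rational correspondences and therefore commutes with the Galois action and with the spectral sequence, so that localisation at $\frakm_{\rmF}$ can be carried out on the $E_2$-page; here one uses that the Hecke action on the cohomology factors through a finite, hence semilocal, ring, so that localisation commutes with Galois cohomology. I would also remark that $\rmX(\rmQ)$ need not be proper---in the split case it is the open Hilbert--Blumenthal surface---but this is immaterial, since the Hochschild--Serre argument works for any variety over $\QQ$ and the input Proposition \ref{CT} already encodes the correct behaviour at the boundary.
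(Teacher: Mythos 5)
Your argument is exactly the paper's: the lemma is proved by applying the Hochschild--Serre spectral sequence to $\rmH^{3}(\rmX(\rmQ)\otimes\QQ,\calO_{\lambda}(2))_{\frakm_{\rmF}}$ and using Proposition \ref{CT} (concentration of the localised geometric cohomology in degree $2$) to force degeneration, so your proposal is correct and takes essentially the same route, just spelled out in more detail.
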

\begin{proof}
This follows from the Hoschchild--Serre spectral sequence applied to the cohomology $\rmH^{3}(\rmX(\rmQ)\otimes\QQ, \calO_{{\lambda}}(2))_{\frakm_{\rmF}}$ and the fact that $\rmH^{i}(\rmX(\rmQ)\otimes{\overline{\QQ}}, \calO_{{\lambda}})_{\frakm_{\rmF}}\neq 0$ only for $i=2$.
\end{proof}

The Chern character map in \eqref{chern} induces a map
\begin{equation*}
\mathrm{ch}_{\calO_{\lambda}}: \rmH^{3}_{\calM}(\rmX(\rmQ)\otimes\QQ, \ZZ(2))\rightarrow \rmH^{3}(\rmX(\rmQ)\otimes\QQ, \calO_{\lambda}(2))_{\frakm_{\rmF}}.
\end{equation*}
Suppose the Galois representation $\overline{\rho}_{\pi^{\rmQ}}$ associated to $\frakm_{\rmF}$ has non-solvable image, then it induces the following {\em Abel--Jacobi map}
\begin{equation*}
\mathrm{AJ}_{\pi^{\rmQ}}:  \rmH^{3}_{\calM}(\rmX(\rmQ)\otimes\QQ, \ZZ(2))\rightarrow \rmH^{1}(\QQ, \rmH^{2}_{\pi^{\rmQ}}(\rmX(\rmQ)\otimes\overline{\QQ}, \calO_{\lambda}(2)))
\end{equation*}
in light of the previous lemma. The image of the Flach element $\Theta^{[p]}$ under this map will be denoted by
\begin{equation}\label{Flach-class}
\kappa^{[p]}=\mathrm{AJ}_{\pi^{\rmQ}}(\Theta^{[p]})
\end{equation}
and will be referred to as the {\em Flach class} and it will play a pivotal role for our Euler system argument later. 

\subsection{Local bevaviours of Flach classes} To use the Euler system argument, we will have to analyze the local behaviours of the Flach class $\kappa^{[p]}$ at all primes $r$ of $\QQ$. Let $\rmN(\pi^{\rmQ})$ be the product of primes at which the Asai representation $((\mathrm{As}(\rho_{\pi^{\rmQ},\lambda})(-1), \mathrm{As}(\rmV_{\pi^{\rmQ}})(-1)))$ is ramified.

\begin{lemma}
For any prime $r\nmid \rmN(\pi^{\rmQ})$,  the Flach class is unramified:
\begin{equation*}
\mathrm{loc}_{r}(\kappa^{[p]})\in \rmH^{1}_{\mathrm{fin}}(\QQ_{r}, \rmH^{2}_{\pi^{\rmQ}}(\rmX(\rmQ)\otimes\overline{\QQ}_{r}, \calO_{\lambda}(2))).
\end{equation*}
\end{lemma}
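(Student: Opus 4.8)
The plan is to use the standard criterion that an étale-cohomology class coming from motivic cohomology is unramified at all primes of good reduction. Concretely, fix a prime $r \nmid \rmN(\pi^{\rmQ})$ with $r \neq l$; then the Galois representation $\rmH^{2}_{\pi^{\rmQ}}(\rmX(\rmQ)\otimes\overline{\QQ}_{r}, \calO_{\lambda}(2))$ is unramified as a $\rmG_{\QQ_{r}}$-module, because it is a subquotient of $m(\pi^{\rmQ},d)$ copies of $\As(\rmT_{\pi^{\rmQ}})(-1)$ and $\As(\rmV_{\pi^{\rmQ}})$ is unramified at $r$ by the very definition of $\rmN(\pi^{\rmQ})$. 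First I would invoke the existence of a proper regular (indeed smooth projective over $\ZZ_{(r)}$, by the integral moduli description in \S 2) model $\rmX(\rmQ)$ of the generic fiber over $\ZZ[1/\rmN d\rmD_{\rmF}]$, so that $\rmX(\rmQ)\otimes\ZZ_{r}$ has good reduction at $r$. The Flach element $\Theta^{[p]}=(\theta_{\ast}\rmX(\rmB)\otimes\QQ, p)$ is represented by a pair $(\rmZ,f)$ with $f = p$, a unit at $r$; hence the cycle-with-function extends over $\ZZ_{(r)}$ and the boundary map $\mathrm{div}$ of \eqref{div} into $\rmH^{2}_{\calM}(\overline{\rmX}(\rmQ)_{\FF_{r}},\ZZ(1)) = \Pic(\overline{\rmX}(\rmQ)_{\FF_{r}})$ kills $\Theta^{[p]}$, since $p$ is invertible on the special fiber at $r$ and $\theta_{\ast}\rmX(\rmB)$ has no vertical components there.

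The key step is then the compatibility of the $\ell$-adic Chern character (equivalently the étale Abel--Jacobi map $\mathrm{AJ}_{\pi^{\rmQ}}$) with specialization: for a class in $\rmH^{3}_{\calM}(\rmX(\rmQ)\otimes\QQ,\ZZ(2))$ that extends to the regular model over $\ZZ_{(r)}$, its image under $\mathrm{AJ}_{\pi^{\rmQ}}$ lands in the subgroup $\rmH^{1}_{\mathrm{fin}}(\QQ_{r}, \rmH^{2}_{\pi^{\rmQ}}(\rmX(\rmQ)\otimes\overline{\QQ}_{r},\calO_{\lambda}(2))) = \rmH^{1}(\FF_{r}, \rmH^{2}_{\pi^{\rmQ}}(\rmX(\rmQ)\otimes\overline{\QQ}_{r},\calO_{\lambda}(2))^{\rmI_{\QQ_{r}}})$. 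This is the standard ``motivic classes are crystalline/unramified at good primes'' principle: one uses the localization sequence in étale cohomology relating $\rmH^{3}(\rmX(\rmQ)\otimes\QQ_{r},-)$, $\rmH^{3}(\rmX(\rmQ)\otimes\ZZ_{r},-)$ and the cohomology of the special fiber, together with smooth–proper base change identifying $\rmH^{2}$ of the special and generic fibers; the image of an integral motivic class factors through $\rmH^{3}(\rmX(\rmQ)\otimes\ZZ_{r},\calO_{\lambda}(2))_{\frakm_{\rmF}}$, and the Hochschild--Serre spectral sequence over the \emph{finite} field $\FF_{r}$ then exhibits this as exactly $\rmH^{1}_{\mathrm{fin}}(\QQ_{r},-)$. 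I would cite the formulation of this in Flach's original paper and in Nekov\'a\v{r}'s or Scholl's treatment of étale realizations of motivic cohomology, adapting it verbatim since $r$ is a good prime.

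I expect the main obstacle to be purely bookkeeping rather than conceptual: one must be careful that the $\frakm_{\rmF}$-localization does not disturb the integral model argument (harmless, since localization is exact and commutes with base change), and that the identification $\rmH^{1}_{\mathrm{fin}}(\QQ_{r},\rmM) = \rmH^{1}(\FF_{r},\rmM^{\rmI_{\QQ_{r}}})$ from the Notations section applies to $\rmM = \rmH^{2}_{\pi^{\rmQ}}(\rmX(\rmQ)\otimes\overline{\QQ}_{r},\calO_{\lambda}(2))$, which holds because this module is unramified (so $\rmM^{\rmI_{\QQ_{r}}} = \rmM$) as noted above. A secondary point is to confirm that $\theta_{\ast}\rmX(\rmB)$, extended over $\ZZ_{(r)}$ via the integral Hirzebruch--Zagier morphism of \S 3.2, is flat over $\ZZ_{(r)}$ so that taking its closure does not introduce spurious special-fiber components — but this follows from the integral definition of $\theta$, which is defined over $\ZZ[1/\rmN d\rmD_{\rmF}]$ and in particular over $\ZZ_{(r)}$. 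With these checks in place the statement follows immediately from the good-reduction specialization of the Abel--Jacobi map.
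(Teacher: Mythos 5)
Your proposal is correct and follows essentially the same route as the paper: one observes that the boundary of $\Theta^{[p]}=(\theta_{\ast}\rmX(\rmB)\otimes\QQ,\,p)$ under the divisor map at $r$ vanishes (the function $p$ is a unit in $\ZZ_{(r)}$, so its divisor on the closure of the cycle is supported only over $p$), and then one invokes the compatibility of the Abel--Jacobi/Chern class map with this divisor map to conclude that the singular residue $\partial_{r}(\kappa^{[p]})$ vanishes, hence $\mathrm{loc}_{r}(\kappa^{[p]})$ lies in the finite part. The only cosmetic difference is that the paper packages this compatibility as a commutative square cited from Weston (Theorem 3.1.1 of \cite{Weston1}), whereas you re-derive it from the standard good-reduction specialization formalism.
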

\begin{proof}
Since  $\rmH^{2}_{\pi^{\rmQ}}(\rmX(\rmQ)\otimes\overline{\QQ}_{r}, \calO_{\lambda}(2))$ is unramified at $r$. It follows that
\begin{equation}\label{sin}
\begin{aligned}
\rmH^{1}_{\mathrm{sin}}(\QQ_{r}, \rmH^{2}_{\pi^{\rmQ}}(\rmX(\rmQ)\otimes\overline{\QQ}_{r}, \calO_{\lambda}(2)))
&=(\rmH^{2}_{\pi^{\rmQ}}(\rmX(\rmQ)\otimes\overline{\QQ}_{r}, \calO_{\lambda}(1)))^{\rmG_{\FF_{r}}} \\
&=(\rmH^{2}_{\pi^{\rmQ}}(\overline{\rmX}(\rmQ)\otimes\overline{\FF}_{r}, \calO_{\lambda}(1)))^{\rmG_{\FF_{r}}}. \\
\end{aligned}
\end{equation}
On the other hand, we have a commutative diagram by \cite[Theorem 3.1.1]{Weston1}
\begin{equation*}
\begin{tikzcd}
\rmH^{3}_{\calM}(\rmX(\rmQ)\otimes\QQ_{r},\ZZ(2)) \arrow[r, "\mathrm{AJ}_{\pi^{\rmQ}}"] \arrow[d, "\mathrm{div}_{r}"] & \rmH^{1}(\QQ_{r}, \rmH^{2}_{\pi^{\rmQ}}(\rmX(\rmQ)\otimes\overline{\QQ}_{r}, \calO_{\lambda}(2))) \arrow[d, "\partial_{r}"] \\
\rmH^{2}_{\calM}(\overline{\rmX}(\rmQ)\otimes\FF_{r},\ZZ(1)) \arrow[r, "\mathrm{cl}"]                 &      (\rmH^{2}_{\pi^{\rmQ}}(\overline{\rmX}(\rmQ)\otimes\overline{\FF}_{r}, \calO_{\lambda}(1)))^{\rmG_{\FF_{r}}}  
\end{tikzcd}
\end{equation*}
where 
\begin{enumerate}
\item the top horizontal map is the Abel--Jacobi map $\mathrm{AJ}_{\pi^{\rmQ}}$ for $\rmX(\rmQ)\otimes\QQ_{r}$;
\item the bottom horizontal map is given by the usual cycle class map of the special fiber $\overline{\rmX}(\rmQ)\otimes\FF_{r}$;
\item the right vertical map $\partial_{r}$ is the singular quotient map at $r$ under the identification of \eqref{sin}.
\end{enumerate}
By definition, it is clear that $\mathrm{div}_{r}(\Theta^{[p]})$ vanishes in $\rmH^{2}_{\calM}(\overline{\rmX}(\rmQ)\otimes\FF_{r},\ZZ(1))$, indeed $\mathrm{div}_{r}(\Theta^{[p]})$ is only supported at the special fiber of $\theta_{\ast}\rmX(\rmB)$ at $p$. Thus, $\partial_{r}(\kappa^{[p]})$ vanishes and hence $\mathrm{loc}_{r}(\kappa^{[p]})$ lies in the finite part. 
\end{proof}

Next we turn to the more interesting case of analyzing $\mathrm{loc}_{p}(\Theta^{[p]})$. First we give a slight refinement of Proposition \ref{Tate}.

\begin{lemma}
Suppose $p$ is an admissible prime for $\rmT_{\pi^{\rmQ}}$, then we have an isomorphism
\begin{equation*}
\calZ_{\pi^{\overline{\rmQ}}}(\overline{\rmQ})\xrightarrow{\sim}(\rmH^{2}_{\pi^{\rmQ}}(\overline{\rmX}(\rmQ)\otimes\overline{\FF}_{p}, \calO_{\lambda}(1)))^{\rmG_{\FF_{p}}}.
\end{equation*}
\end{lemma}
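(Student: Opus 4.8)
The plan is to deduce this refinement directly from Proposition~\ref{Tate} by taking Frobenius invariants over $\FF_{p^2}$ and then descending to $\FF_p$. Recall Proposition~\ref{Tate} gives an isomorphism
\begin{equation*}
\mathcal{C}_{\pi^{\rmQ}}\colon \calZ^{\circ}_{\pi^{\overline{\rmQ}}}(\overline{\rmQ})\oplus \calZ^{\bullet}_{\pi^{\overline{\rmQ}}}(\overline{\rmQ})\xrightarrow{\sim} \rmH^{2}_{\pi^{\rmQ}}(\overline{\rmX}(\rmQ)\otimes\overline{\FF}_{p}, \calO_{\lambda}(1))^{\mathrm{Fr}_{p^{2}}},
\end{equation*}
under which $\mathrm{Fr}_{p}$ acts on the target and swaps the two factors on the source. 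The key point is that, since $p$ is inert in $\rmF$ and the Asai representation $\As(\rmT_{\pi^{\rmQ}})$ is unramified at $p$, the Frobenius $\mathrm{Fr}_{p^{2}}$ acts trivially on $\rmH^{2}_{\pi^{\rmQ}}(\overline{\rmX}(\rmQ)\otimes\overline{\FF}_{p},\calO_{\lambda}(1))^{\rmG_{\FF_{p}}}$ a fortiori, so that
\begin{equation*}
(\rmH^{2}_{\pi^{\rmQ}}(\overline{\rmX}(\rmQ)\otimes\overline{\FF}_{p}, \calO_{\lambda}(1)))^{\rmG_{\FF_{p}}}=\big(\rmH^{2}_{\pi^{\rmQ}}(\overline{\rmX}(\rmQ)\otimes\overline{\FF}_{p}, \calO_{\lambda}(1))^{\mathrm{Fr}_{p^{2}}}\big)^{\mathrm{Fr}_{p}}.
\end{equation*}

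**Next I would** transport this identity through $\mathcal{C}_{\pi^{\rmQ}}$: taking $\mathrm{Fr}_{p}$-invariants on the right, and using that $\mathrm{Fr}_{p}$ swaps $\calZ^{\circ}_{\pi^{\overline{\rmQ}}}(\overline{\rmQ})$ and $\calZ^{\bullet}_{\pi^{\overline{\rmQ}}}(\overline{\rmQ})$, the invariants of the direct sum under the swap are precisely the diagonal copy, isomorphic to a single factor $\calZ_{\pi^{\overline{\rmQ}}}(\overline{\rmQ})$ (this is exactly the situation in which the paper drops the superscript $?\in\{\circ,\bullet\}$). Concretely, the composite
\begin{equation*}
\calZ_{\pi^{\overline{\rmQ}}}(\overline{\rmQ})\xrightarrow{x\mapsto (x,x)} \calZ^{\circ}_{\pi^{\overline{\rmQ}}}(\overline{\rmQ})\oplus \calZ^{\bullet}_{\pi^{\overline{\rmQ}}}(\overline{\rmQ})\xrightarrow{\mathcal{C}_{\pi^{\rmQ}}}(\rmH^{2}_{\pi^{\rmQ}}(\overline{\rmX}(\rmQ)\otimes\overline{\FF}_{p}, \calO_{\lambda}(1)))^{\rmG_{\FF_{p}}}
\end{equation*}
is the asserted isomorphism; injectivity is inherited from $\mathcal{C}_{\pi^{\rmQ}}$, and surjectivity follows because any $\mathrm{Fr}_{p}$-invariant element of the source of $\mathcal{C}_{\pi^{\rmQ}}$ is diagonal.

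**The one point that needs care** is the identification of $\calO_{\lambda}$-coefficients rather than $\rmE_{\lambda}$-coefficients — that is, ensuring that taking invariants commutes with reduction and that no torsion phenomena obstruct the descent; this is guaranteed by the admissibility hypothesis on $p$ (which forces $\mathrm{tr}(\mathrm{Fr}_{p^{2}},\rmT_{\pi^{\rmQ}})\bmod\lambda\notin\{2p,-2p\}$), exactly as in the integral upgrade of \cite{TX} recorded in the proof of Proposition~\ref{Tate} following \cite{Liu-HZ}. Granting that, the argument above is essentially formal. I expect the descent-through-the-swap step to be the only substantive one; everything else is bookkeeping with the isomorphism of Proposition~\ref{Tate}.
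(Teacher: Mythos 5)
Your proposal is correct and follows essentially the same route as the paper: identify $(\rmH^{2}_{\pi^{\rmQ}}(\overline{\rmX}(\rmQ)\otimes\overline{\FF}_{p},\calO_{\lambda}(1)))^{\rmG_{\FF_{p}}}$ with the $\Gal(\FF_{p^{2}}/\FF_{p})$-invariants of the $\mathrm{Fr}_{p^{2}}$-invariant space from Proposition~\ref{Tate}, and then use the fact that the nontrivial element swaps the $\circ$- and $\bullet$-factors, so the invariants are the diagonal copy of $\calZ_{\pi^{\overline{\rmQ}}}(\overline{\rmQ})$. The only cosmetic difference is that your appeal to unramifiedness of the Asai representation is unnecessary (the containment of $\rmG_{\FF_{p}}$-invariants in $\mathrm{Fr}_{p^{2}}$-invariants is purely formal since $\mathrm{Fr}_{p^{2}}=\mathrm{Fr}_{p}^{2}$), and the integrality point you flag is already built into Proposition~\ref{Tate} via admissibility, exactly as you say.
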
 
\begin{proof}
By Proposition \ref{Tate}, $(\rmH^{2}_{\pi^{\rmQ}}(\overline{\rmX}(\rmQ)\otimes\overline{\FF}_{p}, \calO_{\lambda}(1)))^{\rmG_{\FF_{p}}}$ can be identified as the invariant space of  $(\rmH^{2}_{\pi^{\rmQ}}(\overline{\rmX}(\rmQ)\otimes\overline{\FF}_{p}, \calO_{\lambda}(1)))^{\rmG_{\FF_{p^{2}}}}$ under the action of $\Gal(\FF_{p^{2}}/\FF_{p})$. Hence it can be identified as the diagonal image of $\calZ_{\pi^{\overline{\rmQ}}}(\overline{\rmQ})$ in $\calZ^{\circ}_{\pi^{\overline{\rmQ}}}(\overline{\rmQ})\oplus\calZ^{\bullet}_{\pi^{\overline{\rmQ}}}(\overline{\rmQ})$ as the non-trivial element of $\Gal(\FF_{p^{2}}/\FF_{p})$ interchanges the $\circ$-component and the $\bullet$-component of the supersingular locus of $\overline{\rmX}^{\mathrm{ss}}(\rmQ)$. The lemma is proved.
\end{proof}

Now we state the \emph{reciprocity formula} for the class $\kappa^{[p]}$. First we define a bilinear pairing 
\begin{equation*}
(\cdot, \cdot): \calO_{\lambda}[\rmZ(\overline{\rmQ})]\times \calO_{\lambda}[\rmZ(\overline{\rmQ})]\rightarrow \calO_{\lambda}
\end{equation*}
on the space $\calO_{\lambda}[\rmZ(\overline{\rmQ})]$ by
\begin{equation*}
(\zeta, \phi)=\sum\limits_{z\in \rmZ(\overline{\rmQ})}\zeta\phi(z).
\end{equation*}
\begin{proposition}\label{reci}
Let $p$ be an admissible prime for $\rmT_{\pi^{\rmQ}}$ and $\phi$ be any element in 
$\calZ_{\pi^{\overline{\rmQ}}}(\overline{\rmQ})$.
Then we have the following identity
\begin{equation*}
(\partial_{p}(\kappa^{[p]}), \phi)=\sum\limits_{z\in \rmZ(\overline{\rmB})}\phi(z)
\end{equation*}
and here the sum is taken over elements in the image of $\rmZ(\overline{\rmB})$ in $\rmZ(\overline{\rmQ})$ via the canonical embedding $\nu$.
\end{proposition}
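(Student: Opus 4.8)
The plan is to run the Flach class through the localization-at-$p$ square of \cite[Theorem 3.1.1]{Weston1} and then identify the output using the geometry of the supersingular locus. First I would observe that, $p$ being inert in $\rmF$ and $\pi_{p}$ unramified, the Asai representation $\mathrm{As}(\rmV_{\pi^{\rmQ}})$ is unramified at $p$; hence $p\nmid\rmN(\pi^{\rmQ})$ and the commutative square displayed above for primes $r\nmid\rmN(\pi^{\rmQ})$ applies verbatim at $r=p$, with top arrow $\mathrm{AJ}_{\pi^{\rmQ}}$ over $\QQ_{p}$, left arrow $\mathrm{div}_{p}$, bottom arrow the cycle class map $\mathrm{cl}$ of $\overline{\rmX}(\rmQ)\otimes\FF_{p}$, and right arrow $\partial_{p}$ under the identification $\rmH^{1}_{\mathrm{sin}}(\QQ_{p},\rmH^{2}_{\pi^{\rmQ}}(\rmX(\rmQ)\otimes\overline{\QQ}_{p},\calO_{\lambda}(2)))\cong(\rmH^{2}_{\pi^{\rmQ}}(\overline{\rmX}(\rmQ)\otimes\overline{\FF}_{p},\calO_{\lambda}(1)))^{\rmG_{\FF_{p}}}$ from \eqref{sin}. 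The one difference with the case $r\neq p$ is that the constant function $p$ is now a uniformizer of $\ZZ_{(p)}$, so its divisor on the closure of $\theta_{\ast}\rmX(\rmB)\otimes\QQ$ inside the $\ZZ_{(p)}$-model $\rmX(\rmQ)$ is nonzero. Thus $\partial_{p}(\kappa^{[p]})=\mathrm{cl}\bigl(\mathrm{div}_{p}(\Theta^{[p]})\bigr)$, and everything reduces to computing $\mathrm{div}_{p}(\Theta^{[p]})$.

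Next I would identify $\mathrm{div}_{p}(\Theta^{[p]})$ inside $\rmH^{2}_{\calM}(\overline{\rmX}(\rmQ)\otimes\FF_{p},\ZZ(1))=\Ch^{1}(\overline{\rmX}(\rmQ)\otimes\FF_{p})$. The closure of $\theta_{\ast}\rmX(\rmB)\otimes\QQ$ in $\rmX(\rmQ)/\ZZ_{(p)}$ is the integral Hirzebruch--Zagier cycle $\theta_{\ast}\rmX(\rmB)$; it is $\ZZ_{(p)}$-flat and, being the image of the semistable curve $\rmX(\rmB)/\ZZ_{(p)}$, has reduced one-dimensional special fibre, so the divisor of $p$ on it is that special fibre with multiplicity one, i.e.\ $\mathrm{div}_{p}(\Theta^{[p]})=[\overline{\theta}_{\ast}\overline{\rmX}(\rmB)]$. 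By Lemma \ref{HZ-bar} the morphism $\overline{\theta}$ factors through $\overline{\rmX}^{\ss}(\rmQ)$, respects the $\circ$/$\bullet$ decomposition, and on the sets of irreducible components induces the canonical embeddings $\vartheta^{?}\colon\rmZ^{?}(\overline{\rmB})\hookrightarrow\rmZ^{?}(\overline{\rmQ})$; comparing the explicit constructions of the components in Lemmas \ref{RZ-space} and \ref{Dr-space} (the Drinfeld $\ZZ_{p}$-lattice $\Lambda_{i}$ sitting inside the $\ZZ_{p^{2}}$-lattice of Lemma \ref{RZ-space}) shows moreover that $\overline{\theta}$ maps each component $\PP^{1}_{z}$ of $\overline{\rmX}^{?}(\rmB)$ isomorphically onto $\PP^{1}_{\vartheta^{?}(z)}$. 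Writing $\rmZ^{\circ}(\overline{\rmB})=\rmZ^{\bullet}(\overline{\rmB})=\rmZ(\overline{\rmB})$ with both $\vartheta^{\circ},\vartheta^{\bullet}$ equal to the canonical embedding $\nu$, I then get
\[
\mathrm{cl}\bigl(\mathrm{div}_{p}(\Theta^{[p]})\bigr)=\mathcal{C}\bigl(\nu_{\ast}\mathbf{1}_{\rmZ(\overline{\rmB})},\,\nu_{\ast}\mathbf{1}_{\rmZ(\overline{\rmB})}\bigr),
\]
where $\nu_{\ast}\mathbf{1}_{\rmZ(\overline{\rmB})}\in\calO_{\lambda}[\rmZ(\overline{\rmQ})]$ is the characteristic function of the image $\nu(\rmZ(\overline{\rmB}))$.

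Now I would localize at $\frakm_{\rmF}$ and apply Proposition \ref{Tate} together with the preceding lemma, which identify $\calZ_{\pi^{\overline{\rmQ}}}(\overline{\rmQ})$ with $(\rmH^{2}_{\pi^{\rmQ}}(\overline{\rmX}(\rmQ)\otimes\overline{\FF}_{p},\calO_{\lambda}(1)))^{\rmG_{\FF_{p}}}$ via the diagonal inside $\calZ^{\circ}_{\pi^{\overline{\rmQ}}}(\overline{\rmQ})\oplus\calZ^{\bullet}_{\pi^{\overline{\rmQ}}}(\overline{\rmQ})$. Since $\partial_{p}(\kappa^{[p]})$ is $\rmG_{\FF_{p}}$-invariant, the nontrivial element of $\Gal(\FF_{p^{2}}/\FF_{p})$ interchanges the two factors, and $\nu_{\ast}\mathbf{1}_{\rmZ(\overline{\rmB})}$ is the same function in both, this says precisely that $\partial_{p}(\kappa^{[p]})$, viewed in $\calZ_{\pi^{\overline{\rmQ}}}(\overline{\rmQ})$, equals $e_{\frakm_{\rmF}}\bigl(\nu_{\ast}\mathbf{1}_{\rmZ(\overline{\rmB})}\bigr)$, where $e_{\frakm_{\rmF}}$ is the idempotent projecting to the $\frakm_{\rmF}$-part. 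Finally, the Hecke operators in $\TT_{\rmF}$ act self-adjointly for $(\cdot,\cdot)$ (the level being sufficiently small), so $e_{\frakm_{\rmF}}$ is self-adjoint; combining this with $e_{\frakm_{\rmF}}\phi=\phi$ for $\phi\in\calZ_{\pi^{\overline{\rmQ}}}(\overline{\rmQ})$ yields
\[
(\partial_{p}(\kappa^{[p]}),\phi)=\bigl(\nu_{\ast}\mathbf{1}_{\rmZ(\overline{\rmB})},\phi\bigr)=\sum_{z\in\rmZ(\overline{\rmQ})}(\nu_{\ast}\mathbf{1}_{\rmZ(\overline{\rmB})})(z)\,\phi(z)=\sum_{z\in\rmZ(\overline{\rmB})}\phi(\nu(z)),
\]
which is the asserted reciprocity formula.

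The main obstacle is the second paragraph: making rigorous that $\mathrm{div}_{p}(\Theta^{[p]})$ is exactly $[\overline{\theta}_{\ast}\overline{\rmX}(\rmB)]$ with every multiplicity equal to one. This amounts to checking flatness over $\ZZ_{(p)}$ of the integral Hirzebruch--Zagier cycle, reducedness of its special fibre and the absence of spurious vertical components in the closure, together with the assertion that $\overline{\theta}$ is a degree-one map (indeed an isomorphism) on each supersingular $\PP^{1}$, so that the class comes out as $\mathcal{C}(\nu_{\ast}\mathbf{1},\nu_{\ast}\mathbf{1})$ with coefficient $1$ and no stray intersection multiplicities. Once this is in place, the Tate conjecture of \cite{TX}, the Galois-equivariance bookkeeping and the self-adjointness of $\TT_{\rmF}$ are routine.
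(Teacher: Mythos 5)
Your argument is essentially the paper's proof: the same commutative square from \cite[Theorem 3.1.1]{Weston1}, the identification of the divisor of the function $p$ on the closure of $\theta_{\ast}\rmX(\rmB)\otimes\QQ$ with the special fibre of the Hirzebruch--Zagier curve, Lemma \ref{HZ-bar}, the Tate-conjecture isomorphism of Proposition \ref{Tate} together with its $\rmG_{\FF_{p}}$-invariant (diagonal) refinement, and then the pairing computation. The only cosmetic differences are that the paper first computes $\partial_{p^{2}}$ over $\QQ_{p^{2}}$, where $\mathrm{div}_{p^{2}}(\Theta^{[p]})=[\rmX^{\circ}(\rmB)]+[\rmX^{\bullet}(\rmB)]$, and then descends to $\QQ_{p}$, whereas you work with $\mathrm{div}_{p}$ directly and spell out the multiplicity-one and self-adjointness points that the paper treats as immediate.
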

\begin{proof}
By definition, it is clear that $\mathrm{div}_{p^{2}}(\Theta^{[p]})=[\rmX^{\circ}(\rmB)]+[\rmX^{\bullet}(\rmB)]$ as an element in the motivic  cohomology 
\begin{equation*}
\rmH^{2}_{\calM}(\overline{\rmX}(\rmQ)\otimes\FF_{p^{2}},\ZZ(1))
\end{equation*}
which can be identified with the usual Chow group $\mathrm{CH}^{1}(\overline{\rmX}(\rmQ)\otimes\FF_{p^{2}})$.  
We will still consider the commutative diagram used in the previous case 
\begin{equation}\label{diagram-p}
\begin{tikzcd}
\rmH^{3}_{\calM}(\rmX(\rmQ)\otimes\QQ_{p^{2}},\ZZ(2)) \arrow[r, "\mathrm{AJ}_{\pi^{\rmQ}}"] \arrow[d, "\mathrm{div}_{p^{2}}"] & \rmH^{1}(\QQ_{p^{2}}, \rmH^{2}_{\pi^{\rmQ}}(\rmX(\rmQ)\otimes\overline{\QQ}_{p}, \calO_{\lambda}(2))) \arrow[d, "\partial_{p^{2}}"] \\
\rmH^{2}_{\calM}(\overline{\rmX}(\rmQ)\otimes\FF_{p^{2}},\ZZ(1)) \arrow[r, "\mathrm{cl}"]                 &      (\rmH^{2}_{\pi^{\rmQ}}(\overline{\rmX}(\rmQ)\otimes\overline{\FF}_{p}, \calO_{\lambda}(1)))^{\rmG_{\FF_{p^{2}}}}  
\end{tikzcd}
\end{equation}
and we are concerned with the singular residue $\partial_{p^{2}}(\kappa^{[p]})$. By the commutativity of this diagram,  $\partial_{p^{2}}(\kappa^{[p]})$ agrees with the cycle class of $[\rmX^{\circ}(\rmB)]+[\rmX^{\bullet}(\rmB)]$ in $(\rmH^{2}_{\pi^{\rmQ}}(\overline{\rmX}(\rmQ)\otimes\overline{\FF}_{p}, \calO_{\lambda}(1)))^{\rmG_{\FF_{p^{2}}}}$.

It follows that under the identification 
\begin{equation}\label{cyc-class}
 \calZ^{\circ}_{\pi^{\overline{\rmQ}}}(\overline{\rmQ})\oplus \calZ^{\bullet}_{\pi^{\overline{\rmQ}}}(\overline{\rmQ})  \xrightarrow{\sim} \rmH^{2}_{\pi^{\rmQ}}(\overline{\rmX}(\rmQ)\otimes\overline{\FF}_{p}, \calO_{\lambda}(1))^{\mathrm{Fr}_{p^{2}}},
\end{equation}
$\partial_{p^{2}}(\kappa^{[p]})$ is given by $(\mathbf{1}_{\rmZ^{\circ}(\overline{\rmB})}, \mathbf{1}_{\rmZ^{\bullet}(\overline{\rmB})})$ where $\mathbf{1}_{\rmZ^{?}(\overline{\rmB})}$ is the characteristic function of $\rmZ^{?}(\overline{\rmB})$ for $?\in\{\circ, \bullet\}$. This follows from 
the fact that the isomorphism in \eqref{cyc-class} is given by the cycle class map and statements of Lemma \ref{HZ-bar}.
Thus under the identification
\begin{equation*}
\calZ_{\pi^{\overline{\rmQ}}}(\overline{\rmQ})\xrightarrow{\sim}(\rmH^{2}_{\pi^{\rmQ}}(\overline{\rmX}(\rmQ)\otimes\overline{\FF}_{p}, \calO_{\lambda}(1)))^{\rmG_{\FF_{p}}},
\end{equation*}
the singular residue $\partial_{p}(\kappa^{[p]})$ is simply given by $\mathbf{1}_{\rmZ(\overline{\rmB})}$ and hence we have
\begin{equation*}
\begin{aligned}
(\partial_{p}(\kappa^{[p]}), \phi)&=(\mathbf{1}_{\rmZ(\overline{\rmB})}, \phi)
=\sum\limits_{z\in \rmZ(\overline{\rmB})}\phi(z).\\
\end{aligned}
\end{equation*}
\end{proof}
\section{Distinguished representation and base change}
\subsection{Dinstinguished representation} We recall the definition of a distinguished representation and its quaternionic variant. Let $\Pi$ be a cuspidal automorphic representation of $\GL_{2}(\mathbf{A}_{\rmF})$ with trivial central character, then $\Pi$ is called \emph{distinguished} if there exists an automorphic form $\phi$ in the space of $\Pi$ such that the period
\begin{equation*}
\calP^{\GL_{2}}_{\mathrm{dis}}(\phi)=\int_{\GL_{2}(\QQ)\backslash\GL_{2}(\mathbf{A}_{\rmF})}\phi(g)dg
\end{equation*}
is non-vanishing. It is known that $\pi$ is distinguished if it satisfies the following properties: 
\begin{enumerate}
\item $\Pi_{\infty}$ is in the discrete series;  
\item $\Pi$ is in the image of base-change of an automorphic representation $\pi$ of $\GL_{2}(\mathbf{A})$.
\end{enumerate}

We will consider the quaternionic variant of the above period for the quaternion algebra $\overline{\rmQ}=\overline{\rmB}\otimes \rmF$. Suppose that $\pi^{\overline{\rmQ}}$ is a cuspidal automorphic representation of $\rmG(\overline{\rmQ})(\mathbf{A})$. Then we say $\pi^{\overline{\rmQ}}$ is distinguished if there exists an automorphic form $\phi$ in the space of $\pi^{\overline{\rmQ}}$ such that the period
\begin{equation*}
\calP_{\mathrm{dis}}(\phi)=\int_{\rmG(\overline{\rmB})(\QQ)\backslash\rmG(\overline{\rmB})(\mathbf{A})}\phi(g)dg
\end{equation*}
is non-vanishing. For a fixed $\phi$, we will call $\calP_{\mathrm{dis}}(\phi)$ the distinguished period of $\phi$.

These distinguished representations  appear naturally in the  proof of Tate conjectures for Hilbert--Blumenthal surfaces and its quaternionic variant \cite{HLR}, \cite{Lai}, \cite{FH}. In fact, it can be shown that the Tate classes defined on abelian extensions of $\QQ$ are all supported on the isotypic component of the distinguished representations. We will not use this fact in the sequel.

The following proposition shows that the notion of being distinguished is compatible with respect to the Jacquet--Langlands correspondence.
\begin{proposition}\label{FH}
Suppose that $\Pi$ is a cuspidal automorphic representation of $\GL_{2}(\mathbf{A}_{\rmF})$ which corresponds to a cuspidal automorphic representation $\pi^{\overline{\rmQ}}$ of $\rmG(\overline{\rmQ})$ under the Jacquet--Langlands correspondence. Then $\pi^{\overline{\rmQ}}$ is distinguished with respect to $\rmG(\overline{\rmB})$ if and only if  $\pi$ is distinguished with respect to $\GL_{2}(\mathbf{A})$ and for each prime $r$ dividing the discriminant $\rmD({\overline{\rmB}})$ of $\overline{\rmB}$ which is inert in $\rmF$, the local representation $\pi^{\overline{\rmQ}}_{r}$ is not a principal series rerpesentation $\rmI(\mu_{1}, \mu_{2})$ with $\mu_{i}$ trivial on $\QQ^{\times}_{r}$ for $i=1, 2$.
\end{proposition}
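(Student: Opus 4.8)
The plan is to follow the argument of Flicker--Hakim \cite{FH}: reduce the global distinction statement, on both the $\GL_{2}$ side and the quaternionic side, to a product of purely local conditions, and then compare these local conditions place by place via the local Jacquet--Langlands correspondence. The input that makes the reduction possible is \emph{local multiplicity one} for the relevant spherical pairs $\big(\GL_{2}(\rmF_{v}),\GL_{2}(\QQ_{v})\big)$ and $\big(\rmG(\overline{\rmQ})(\QQ_{v}),\rmG(\overline{\rmB})(\QQ_{v})\big)$, due to Flicker and to Prasad; granting it, a nonzero $\rmG(\overline{\rmB})(\Adel_{\QQ})$-invariant functional on $\pi^{\overline{\rmQ}}$ factors as a restricted tensor product $\bigotimes_{v}\ell_{v}$. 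Hence $\pi^{\overline{\rmQ}}$ is distinguished if and only if \textup{(a)} each local component $\pi^{\overline{\rmQ}}_{v}$ admits a nonzero $\rmG(\overline{\rmB})(\QQ_{v})$-invariant functional, and \textup{(b)} the product of the suitably normalized local functionals, evaluated on an everywhere unramified vector, does not vanish. By the unfolding of Flicker--Rallis type --- and, on the quaternionic side, by the relative trace formula comparison of \cite{FH} --- condition \textup{(b)} is, up to an explicit nonzero constant, the residue at $s=1$ of the Asai $L$-function $L\big(s,\mathrm{As}(\Pi)\big)$; this residue is non-zero exactly when $\Pi$ is a base change from $\QQ$, which is what the clause that $\pi$ (equivalently $\Pi=\mathrm{BC}_{\rmF/\QQ}(\pi)$) be distinguished records. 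Since Asai $L$-functions are invariant under Jacquet--Langlands, \textup{(b)} holds on the quaternionic side exactly when it holds on the $\GL_{2}$ side, so the proposition reduces to comparing the local conditions \textup{(a)} on the two sides.

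For the local comparison I would proceed as follows. At every finite place $v\nmid\rmD(\overline{\rmB})$ we have $\rmG(\overline{\rmB})(\QQ_{v})=\GL_{2}(\QQ_{v})$ and $\rmG(\overline{\rmQ})(\QQ_{v})=\GL_{2}(\rmF_{v})$, so condition \textup{(a)} on the quaternionic side is literally the local $\GL_{2}(\QQ_{v})$-distinction of $\Pi_{v}$ and nothing new happens. At the archimedean places the weight-$(2,2)$ hypothesis forces each $\pi^{\overline{\rmQ}}_{\infty}$ to be Jacquet--Langlands equivalent to a discrete series, and the matching of $\HH^{\times}$-distinction with $\GL_{2}(\RR)$-distinction is the archimedean computation already used in \cite{HLR}. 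At a finite prime $r\mid\rmD(\overline{\rmB})$ that is split or ramified in $\rmF$, the extension $\rmF_{r}/\QQ_{r}$ splits $\overline{\rmB}_{r}$, so $\rmG(\overline{\rmQ})(\QQ_{r})\cong\GL_{2}(\rmF_{r})$; using that $\pi_{r}$ is an unramified twist of Steinberg by the newform hypothesis (or an arbitrary discrete series in the general statement), together with the explicit base-change recipe and local Jacquet--Langlands on the $\overline{\rmB}_{r}^{\times}$ side, one checks that $\pi^{\overline{\rmQ}}_{r}$ is always $\overline{\rmB}_{r}^{\times}$-distinguished at such $r$, with no exceptional locus.

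The heart of the matter --- and the step I expect to be the main obstacle --- is the finite primes $r\mid\rmD(\overline{\rmB})$ that are \emph{inert} in $\rmF$. There $\rmF_{r}=\QQ_{r^{2}}$ is the unramified quadratic extension, which splits $\overline{\rmB}_{r}$, so $\rmG(\overline{\rmQ})(\QQ_{r})\cong\GL_{2}(\QQ_{r^{2}})$ and $\pi^{\overline{\rmQ}}_{r}=\Pi_{r}$ is genuinely a representation of $\GL_{2}(\QQ_{r^{2}})$; but the subgroup over which one integrates is $\overline{\rmB}_{r}^{\times}$, the units of the division quaternion algebra over $\QQ_{r}$ embedded in $\GL_{2}(\QQ_{r^{2}})$ through $\overline{\rmB}_{r}\hookrightarrow\overline{\rmB}_{r}\otimes_{\QQ_{r}}\QQ_{r^{2}}=\rmM_{2}(\QQ_{r^{2}})$, \emph{not} $\GL_{2}(\QQ_{r})$. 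Thus condition \textup{(a)} at $r$ reads: the $\GL_{2}(\QQ_{r^{2}})$-representation $\Pi_{r}$, which is a base change from $\QQ_{r}$, is distinguished by the quaternionic subgroup $\overline{\rmB}_{r}^{\times}$. This is governed by Prasad's local dichotomy for $\GL_{2}$ over a quadratic extension --- the local counterpart of \cite{HLR}, whose $\epsilon$-factor (equivalently Asai $L$-factor) criterion separates $\GL_{2}(\QQ_{r})$-distinction from $\overline{\rmB}_{r}^{\times}$-distinction --- combined with the explicit classification of base changes: $\mathrm{BC}(\St\otimes\chi)$ and the base change of a supercuspidal that remains cuspidal are discrete series, for which the base change of a discrete series from $\QQ_{r}$ is $\overline{\rmB}_{r}^{\times}$-distinguished, whereas the base change of a supercuspidal dihedral with respect to $\QQ_{r^{2}}/\QQ_{r}$ is the principal series $\rmI(\mu_{1},\mu_{2})$, and the explicit evaluation of the $\overline{\rmB}_{r}^{\times}$-invariant functional on an induced representation shows that it is nonzero unless $\mu_{1}$ and $\mu_{2}$ are both trivial on $\QQ_{r}^{\times}$ --- exactly the excluded family. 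Assembling these place-by-place equivalences of \textup{(a)} with the common condition \textup{(b)}, and checking compatibility of the local choices with a single global functional, yields the asserted ``if and only if.'' The genuine difficulty is concentrated at the inert primes: the change of subgroup from $\GL_{2}(\QQ_{r})$ to $\overline{\rmB}_{r}^{\times}$ pushes one into the nonformal part of Prasad's theory, where the branching between the two distinction properties is detected by local root numbers and must be computed by hand for induced representations.
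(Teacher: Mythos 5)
The paper does not give an argument here at all: its proof of Proposition \ref{FH} is a single sentence observing that the statement is a special case of the main theorem of Flicker--Hakim, \cite[Theorem 0.3]{FH}. What you have written is, in effect, an outline of how Flicker--Hakim prove that theorem --- local multiplicity one for the two pairs, the Asai/Flicker--Rallis interpretation of the $\GL_{2}$-period, the relative trace formula comparison to handle the quaternionic period, and Prasad-type local analysis at the inert primes dividing $\rmD(\overline{\rmB})$, with the principal series $\rmI(\mu_{1},\mu_{2})$, $\mu_{i}\vert_{\QQ_{r}^{\times}}=1$, as the only local obstruction. Since your decisive global step (the trace formula comparison) is itself cited from \cite{FH}, your route is not genuinely different from the paper's; it is an expanded version of the same citation, which is acceptable but far more than is needed.

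Two points in the sketch are inaccurate and would need repair if it were meant as an actual proof. First, at a prime $r\mid \rmD(\overline{\rmB})$ which is \emph{split} in $\rmF$ you claim that $\rmF_{r}$ splits $\overline{\rmB}_{r}$; this is false, since $\rmF\otimes\QQ_{r}\cong \QQ_{r}\times\QQ_{r}$ is not a field and $\overline{\rmQ}_{r}\cong \overline{\rmB}_{r}\times\overline{\rmB}_{r}$ remains a product of division algebras. The reason no condition appears at such $r$ (and at the archimedean place, which also splits in the real quadratic $\rmF$) is different: the local component there is $\pi^{\overline{\rmB}}_{r}\otimes\pi^{\overline{\rmB}}_{r}$ restricted to the diagonal copy of $\overline{\rmB}^{\times}_{r}$, and $\Hom_{\overline{\rmB}^{\times}_{r}}(\pi^{\overline{\rmB}}_{r}\otimes\pi^{\overline{\rmB}}_{r},\CC)\neq 0$ simply because $\pi^{\overline{\rmB}}_{r}$ is self-dual, its central character being trivial. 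Second, the reduction ``distinguished if and only if local distinction everywhere plus nonvanishing of the Asai residue'' is not a formal consequence of the factorization of invariant functionals: on the quaternionic side there is no unipotent subgroup along which to unfold, so identifying the global $\rmG(\overline{\rmB})$-period with a product of local functionals times the residue of $L(s,\As(\Pi))$ is precisely what the relative trace formula comparison of \cite{FH} establishes; as phrased, your step (b) partially assumes the conclusion. If you only need the proposition, do what the paper does and quote \cite[Theorem 0.3]{FH} directly.
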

\begin{proof}
This is a special case of the main theorem of Flicker--Hakim, see \cite[Theorem 0.3]{FH}.
\end{proof}

\subsection{Asai representation and base change} In light of the above proposition and our reciprocity formula in Proposition \ref{reci}, we will from here on restrict our attention to the situation given by the following datum:
\begin{enumerate}
\item $f$ is a weight $2$ newform in $\rmS_{2}(\Gamma_{0}(\rmN))$ which defines a cuspidal automorphic representation $\pi$ of $\GL_{2}(\mathbf{A})$; 
\item $\pi$ admits a Jacquet--Langlands transfer to a cuspidal automorphic representation $\pi^{\circ}=\pi^{\overline{\rmB}}$ of $\rmG(\overline{\rmB})(\mathbf{A})$;
\item $\pi^{\overline{\rmQ}}$ is the cuspidal automorphic representation of $\rmG(\overline{\rmQ})(\mathbf{A})$ given by the base change of $\pi^{\overline{\rmB}}$ to $\rmG(\overline{\rmQ})(\mathbf{A})$;
\item $\pi^{\rmQ}$ is the cuspidal automorphic representation of $\rmG({\rmQ})(\mathbf{A})$ obtained from $\pi^{\overline{\rmQ}}$ via the Jacquet--Langlands transfer from $\pi^{\overline{\rmQ}}$ to $\rmG({\rmQ})(\mathbf{A})$.
\end{enumerate}

Let $(\rho_{\pi^{\circ}}, \rmV_{\pi^{\circ}})$ be the Galois representation attached to $\pi^{\circ}$ and $(\rho_{\pi^{\rmQ}}, \rmV_{\pi^{\rmQ}})$ be the Galois representation attached to $\pi^{\rmQ}$. By construction, the Galois representation attached to $\pi^{\overline{\rmQ}}$ is the same as $(\rho_{\pi^{\rmQ}}, \rmV_{\pi^{\rmQ}})$. In the situation above, the Asai representation $\mathrm{As}(\rmV_{\pi^{\rmQ}})(-1)$ can be related to the representation $\rmV_{\pi^{\circ}}$ as in the following lemma. 
\begin{lemma}
We have an isomorphism 
\begin{equation*}
\mathrm{As}(\rmV_{\pi^{\rmQ}})(-1)\cong \mathrm{Sym}^{2}(\rmV_{\pi^{\circ}})(-1)\oplus \rmE_{\lambda}(\omega_{\rmF/\QQ}).
\end{equation*}
Note that $\mathrm{Sym}^{2}(\rmV_{\pi^{\circ}})(-1)$ is also isomorphic to $\mathrm{Ad}^{0}(\rmV_{\pi^{\circ}})$. 
Similarly, we also have a such decomposition integrally
\begin{equation*}
\mathrm{As}(\rmT_{\pi^{\rmQ}})(-1)\cong \mathrm{Sym}^{2}(\rmT_{\pi^{\circ}})(-1)\oplus \calO_{\lambda}(\omega_{\rmF/\QQ})
\end{equation*}
for the lattice $\mathrm{Sym}^{2}(\rmT_{\pi^{\circ}})(-1)$ in $\mathrm{Sym}^{2}(\rmV_{\pi^{\circ}})(-1)$.
\end{lemma}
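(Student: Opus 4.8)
The plan is to reduce everything to the behaviour of tensor (Asai) induction on a representation that is restricted from $\rmG_{\QQ}$, and then to transport the resulting splitting to lattices using that $2$ is a unit in $\calO_{\lambda}$. The first step is to identify $\rmV_{\pi^{\rmQ}}$ with the restriction $\rmV_{\pi^{\circ}}\vert_{\rmG_{\rmF}}$. By hypothesis $\pi^{\overline{\rmQ}}$ is the base change of $\pi^{\overline{\rmB}}=\pi^{\circ}$ from $\QQ$ to $\rmF$, and (as recalled in the excerpt) the Galois representation attached to $\pi^{\rmQ}$ is the same as the one attached to $\pi^{\overline{\rmQ}}$, since Jacquet--Langlands preserves the associated Galois representation. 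Comparing Frobenius characteristic polynomials at a prime $r$ unramified for all the data: at a place $w\mid r$ of $\rmF$ the characteristic polynomial of $\rho_{\pi^{\overline{\rmQ}}}(\Frob_{w})$ equals that of $\rho_{\pi^{\circ}}(\Frob_{r})$ if $r$ splits, and that of $\rho_{\pi^{\circ}}(\Frob_{r}^{2})=\rho_{\pi^{\circ}}(\Frob_{w})$ if $r$ is inert, because quadratic base change squares the Satake parameters at inert primes. Both representations being semisimple, Chebotarev gives $\rmV_{\pi^{\rmQ}}\cong\rmV_{\pi^{\circ}}\vert_{\rmG_{\rmF}}$ as $\rmG_{\rmF}$-representations.

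Next I would establish the general statement that for a quadratic extension $\rmF/\QQ$ and any two-dimensional $\rmG_{\QQ}$-representation $W$ over $\rmE_{\lambda}$ there is a canonical $\rmG_{\QQ}$-equivariant isomorphism
\[
\otimes\mathrm{Ind}_{\rmG_{\rmF}}^{\rmG_{\QQ}}\big(W\vert_{\rmG_{\rmF}}\big)\;\cong\;\mathrm{Sym}^{2}(W)\,\oplus\,\big(\wedge^{2}W\otimes\omega_{\rmF/\QQ}\big).
\]
The mechanism is to use the canonical identification of $W\vert_{\rmG_{\rmF}}$ with its $\Gal(\rmF/\QQ)$-conjugate supplied by the ambient $\rmG_{\QQ}$-action: the space underlying the tensor induction becomes $W\otimes W$, on which $\rmG_{\rmF}$ acts diagonally, while any lift $\tilde{\sigma}\in\rmG_{\QQ}$ of the non-trivial element of $\Gal(\rmF/\QQ)$ acts by the transposition $\tau$ composed with $\tilde{\sigma}\otimes\tilde{\sigma}$. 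As $\tau$ commutes with the diagonal action, its $\pm 1$-eigenspaces $\mathrm{Sym}^{2}(W)$ and $\wedge^{2}W$ are $\rmG_{\QQ}$-stable; on $\mathrm{Sym}^{2}(W)$ one recovers the natural action, and on the line $\wedge^{2}W$ the extra sign contributed by $\tau$ on the $(-1)$-eigenspace is exactly the quadratic character $\omega_{\rmF/\QQ}$ cutting out $\rmF$. Taking $W=\rmV_{\pi^{\circ}}$, using $\wedge^{2}\rmV_{\pi^{\circ}}=\det\rmV_{\pi^{\circ}}=\chi_{\mathrm{cyc}}$ in the normalisation fixed above, and twisting by $\rmE_{\lambda}(-1)$, one gets $\mathrm{As}(\rmV_{\pi^{\rmQ}})(-1)\cong\mathrm{Sym}^{2}(\rmV_{\pi^{\circ}})(-1)\oplus\rmE_{\lambda}(\omega_{\rmF/\QQ})$; and $\mathrm{Sym}^{2}(V)(-1)=\mathrm{Sym}^{2}(V)\otimes(\det V)^{-1}=\mathrm{Ad}^{0}(V)$ for rank-two $V$ is immediate.

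For the integral assertion I would take $\rmT_{\pi^{\circ}}$ to be a $\rmG_{\QQ}$-stable $\calO_{\lambda}$-lattice in $\rmV_{\pi^{\circ}}$, which is unique up to scaling because $\overline{\rho}_{\pi^{\circ}}$ is absolutely irreducible; then $\rmT_{\pi^{\rmQ}}=\rmT_{\pi^{\circ}}\vert_{\rmG_{\rmF}}$, and $\mathrm{As}(\rmT_{\pi^{\rmQ}})=\otimes\mathrm{Ind}_{\rmG_{\rmF}}^{\rmG_{\QQ}}(\rmT_{\pi^{\rmQ}})$ is the $\rmG_{\QQ}$-lattice $\rmT_{\pi^{\circ}}\otimes_{\calO_{\lambda}}\rmT_{\pi^{\circ}}$. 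Since $l\geq 5$, $2\in\calO_{\lambda}^{\times}$, so $(1\pm\tau)/2$ are idempotents exhibiting $\rmT_{\pi^{\circ}}\otimes_{\calO_{\lambda}}\rmT_{\pi^{\circ}}=\mathrm{Sym}^{2}_{\calO_{\lambda}}(\rmT_{\pi^{\circ}})\oplus\wedge^{2}_{\calO_{\lambda}}(\rmT_{\pi^{\circ}})$ as a direct sum of $\rmG_{\QQ}$-stable free $\calO_{\lambda}$-modules. Running the computation of the previous paragraph over $\calO_{\lambda}$ then gives the integral decomposition, with $\wedge^{2}_{\calO_{\lambda}}(\rmT_{\pi^{\circ}})(-1)\cong\calO_{\lambda}(\omega_{\rmF/\QQ})$ as $\rmG_{\QQ}$-modules.

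The main obstacle is the explicit tensor-induction bookkeeping in the second step: one must pin down the action of a lift $\tilde{\sigma}$ precisely enough to see that the sign on the alternating part is genuinely the quadratic character $\omega_{\rmF/\QQ}$ and that the splitting is $\rmG_{\QQ}$-equivariant, not merely an abstract direct sum; and one must know that the cohomologically defined lattice $\mathrm{As}(\rmT_{\pi^{\rmQ}})$ really is the tensor induction of a $\rmG_{\QQ}$-stable lattice, which is where the absolute irreducibility of $\overline{\rho}_{\pi^{\circ}}$ (forcing uniqueness of the lattice) and the cleanness of $d$ enter. Granting these points, the rational statement is standard and the integral refinement follows purely from $2$ being invertible in $\calO_{\lambda}$.
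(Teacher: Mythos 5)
The paper states this lemma without proof, so there is no argument of the author's to compare against; your rational decomposition is the standard one and is carried out correctly: identify $\rmV_{\pi^{\rmQ}}$ with $\rmV_{\pi^{\circ}}\vert_{\rmG_{\rmF}}$ via base change and Chebotarev, untwist the tensor induction to $\rmV_{\pi^{\circ}}\otimes\rmV_{\pi^{\circ}}$ with a lift of the nontrivial element of $\Gal(\rmF/\QQ)$ acting through the flip composed with the diagonal action, read off the eigenspace decomposition $\Sym^{2}\oplus\wedge^{2}(\omega_{\rmF/\QQ})$, and twist by $(-1)$ using $\wedge^{2}\rmV_{\pi^{\circ}}=\chi_{\mathrm{cyc}}$ and $\Sym^{2}(\rmV_{\pi^{\circ}})(-1)\cong\Ad^{0}(\rmV_{\pi^{\circ}})$.

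The integral half, however, has a genuine gap. In the paper $\As(\rmT_{\pi^{\rmQ}})(-1)$ is by definition the lattice cut out by $\rmH^{2}_{\pi^{\rmQ}}(\rmX(\rmQ)\otimes\overline{\QQ},\calO_{\lambda}(1))$; cleanness of $d$ only says that this cohomology is $m(\pi^{\rmQ},d)$ copies of one lattice, not that this lattice is a tensor induction. Your justification that the cohomological lattice ``really is'' $\rmT_{\pi^{\circ}}\otimes_{\calO_{\lambda}}\rmT_{\pi^{\circ}}$ --- uniqueness of the stable lattice forced by absolute irreducibility of $\overline{\rho}_{\pi^{\circ}}$ --- does not apply: uniqueness up to homothety holds for lattices in the two-dimensional $\rmV_{\pi^{\circ}}$ (and in $\rmV_{\pi^{\rmQ}}$ as a $\rmG_{\rmF}$-module), but the relevant representation here is the four-dimensional $\As(\rmV_{\pi^{\rmQ}})(-1)\cong\Ad^{0}(\rmV_{\pi^{\circ}})\oplus\rmE_{\lambda}(\omega_{\rmF/\QQ})$, which is residually \emph{reducible}. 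Such a representation admits non-homothetic and in general non-split stable lattices, so nothing you have said rules out that the cohomological lattice is a non-split extension of a lattice in one summand by a lattice in the other, in which case the asserted integral direct sum would fail. Your idempotent argument with $(1\pm\tau)/2$ is correct for the specific lattice $\rmT_{\pi^{\circ}}\otimes_{\calO_{\lambda}}\rmT_{\pi^{\circ}}$, but that is not the lattice the lemma is about.

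What closes the gap is residual disjointness of the two summands, i.e.\ the running big-image hypothesis (which is genuinely needed; the lemma is stated before Assumption \ref{ass1} but is used under it). Since $l\geq 5$ and the image of $\overline{\rho}_{\pi^{\circ}}$ contains $\GL_{2}(\FF_{l})$, the residual representation $\Ad^{0}(\overline{\rho}_{\pi^{\circ}})$ is absolutely irreducible of dimension three, hence shares no Jordan--H\"older constituent with the reduction of $\calO_{\lambda}(\omega_{\rmF/\QQ})$. For any $\rmG_{\QQ}$-stable lattice $\rmL$ in $\Ad^{0}(\rmV_{\pi^{\circ}})\oplus\rmE_{\lambda}(\omega_{\rmF/\QQ})$, the quotient of the projection of $\rmL$ to either factor by the corresponding intersection is a finite Galois module, and the two quotients are isomorphic; their constituents would have to occur in the reductions of lattices of both factors, hence they vanish, so $\rmL=(\rmL\cap\Ad^{0}(\rmV_{\pi^{\circ}}))\oplus(\rmL\cap\rmE_{\lambda}(\omega_{\rmF/\QQ}))$. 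Irreducibility of $\Ad^{0}(\overline{\rho}_{\pi^{\circ}})$ then makes the first summand homothetic to $\Sym^{2}(\rmT_{\pi^{\circ}})(-1)$ and the second homothetic to $\calO_{\lambda}(\omega_{\rmF/\QQ})$, which is exactly the integral statement for the cohomological lattice, independently of whether it coincides with the tensor-induction lattice. Replace (or supplement) your final step with this argument and the proof is complete.
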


\section{Bounding the adjoint Selmer groups}
\subsection{Generalities on Selmer groups}
We will consider a general Galois representation $\rho: \rmG_{\QQ}\rightarrow\GL(\rmV)$ over $\rmE_{\lambda}$. Suppose that $\rmT$ is a Galois stable lattice in $\rmV$ and set $\calM=\rmV/\rmT$, this is called the divisible Galois module associated to $\rho$. We recall some relevant definitions and facts concerning the Bloch--Kato Selmer group for $\calM$. These Galois modules fit in the exact sequence 
\begin{equation*}
0\rightarrow \rmT\xrightarrow{i} \rmV\xrightarrow{\mathrm{pr}} \calM\rightarrow 0.
\end{equation*}
Let $\rmM_{n}=\calM[\lambda^{n}]$ and $\rmT_{n}=\rmT/\lambda^{n}\rmT$. Let $i_{n}: \rmM_{n}\hookrightarrow \calM$ and $\mathrm{pr}_{n}:\rmT\rightarrow \rmT_{n}$ be the natural inclusion and reduction maps. The Galois module  $\rmM_{n}$ will be referred to as the finite Galois module.

We define the local Bloch--Kato conditions using the following recipe:
\begin{enumerate}
\item for $v\neq l$, we define $\rmH^{1}_{f}(\QQ_{v},\rmV)=\rmH^{1}_{\mathrm{fin}}(\QQ_{v}, \rmV)$;
\item for $v=l$, we define $\rmH^{1}_{f}(\QQ_{p},\rmV)=\mathrm{ker}\{\rmH^{1}(\QQ_{l}, \rmV)\rightarrow \rmH^{1}(\QQ_{l}, \rmV\otimes \rmB_{\mathrm{cris}})\}$;
\item we define $\rmH^{1}_{f}(\QQ_{r},\calM)=\mathrm{pr}_{\ast}\rmH^{1}_{f}(\QQ_{r}, \rmV)$ for each prime $r$;  
\item we define $\rmH^{1}_{f}(\QQ_{r},\rmM_{n})=i^{\ast}_{n}\rmH^{1}_{f}(\QQ_{r}, \calM)$  for each prime $r$.
\end{enumerate}
We will be interested in the Bloch--Kato Selmer group of $\calM$ defined by
\begin{equation*}
\rmH^{1}_{f}(\QQ, \calM)=\mathrm{ker}\{\rmH^{1}(\QQ,\calM)\rightarrow \prod_{r}\frac{\rmH^{1}(\QQ_{r}, \calM)}{\rmH^{1}_{f}(\QQ_{r},\calM)}\} .
\end{equation*}
The Bloch--Kato Selmer group of $\rmM_{n}$ is defined by the same recipe replacing $\calM$ by $\rmM_{n}$.
Moreover we have
\begin{equation*}
\rmH^{1}_{f}(\QQ, \calM)=\lim\limits_{\longrightarrow}\rmH^{1}_{f}(\QQ, \rmM_{n})
\end{equation*}
and an exact sequence 
\begin{equation*}
0\rightarrow \mathrm{pr}_{\ast}\rmH^{1}_{f}(\QQ, \rmV)\rightarrow \rmH^{1}_{f}(\QQ, \calM)\rightarrow \Sha(\QQ, \calM)\rightarrow 0
\end{equation*}
defining the \emph{Tate-Shafarevich group} $\Sha(\QQ, \calM)$ for $\calM$. The proof of following lemma is an easy diagram chase and therefore will be omitted.

\begin{lemma}
Suppose we have an exact sequence of divisible or finite Galois modules
\begin{equation*}
0\rightarrow \calM^{\prime}\rightarrow \calM\rightarrow \calM^{\prime\prime}\rightarrow 0.
\end{equation*}
Then it induces an exact sequence
\begin{equation*}
\begin{aligned}
0\rightarrow \rmH^{0}(\QQ, \calM^{\prime})\rightarrow \rmH^{0}(\QQ, \calM)\rightarrow \rmH^{0}(\QQ, \calM^{\prime\prime})\rightarrow \rmH^{1}_{f}(\QQ, \calM^{\prime})\rightarrow \rmH^{1}_{f}(\QQ, \calM)\rightarrow \rmH^{1}_{f}(\QQ, \calM^{\prime\prime})\rightarrow 0.\\
\end{aligned}
\end{equation*}
\end{lemma}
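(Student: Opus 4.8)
The plan is to read off the claimed exact sequence from the long exact sequences of continuous Galois cohomology attached to $0\to\calM'\to\calM\to\calM''\to 0$ and then to cut these down to the Bloch--Kato subspaces. First I would write out the commutative ladder whose top row is the global long exact sequence for $\rmG_{\QQ}$ and whose remaining rows are the local long exact sequences for $\rmG_{\QQ_r}$ at each rational prime $r$, joined by the restriction maps $\loc_r$; all boundary maps and connecting squares commute by functoriality of cohomology in the coefficients. Since $\rmH^1_f(\QQ,\calM)=\varinjlim_n\rmH^1_f(\QQ,\rmM_n)$ and filtered colimits of discrete modules are exact on continuous cohomology, it suffices to treat the finite modules $\rmM_n$, the divisible case being recovered in the colimit; so from now on all modules may be taken finite.

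The left half of the sequence is essentially formal. The segment $0\to\rmH^0(\QQ,\calM')\to\rmH^0(\QQ,\calM)\to\rmH^0(\QQ,\calM'')$ is the start of the global long exact sequence. The one point needing care is that the connecting map $\delta\colon\rmH^0(\QQ,\calM'')\to\rmH^1(\QQ,\calM')$ has image inside $\rmH^1_f(\QQ,\calM')$: localizing at $r$ one has $\loc_r\circ\delta=\delta_r\circ\loc_r$ with $\delta_r\colon\rmH^0(\QQ_r,\calM'')\to\rmH^1(\QQ_r,\calM')$, and since the local condition $\rmH^1_f(\QQ_r,-)=\mathrm{pr}_{\ast}\rmH^1_f(\QQ_r,\rmV)$ is compatible with the morphisms of the sequence, $\delta_r$ factors through $\rmH^1_f(\QQ_r,\calM')$, so $\delta(x)$ is Bloch--Kato at every place. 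Granting this, exactness at $\rmH^1_f(\QQ,\calM')$ and at $\rmH^1_f(\QQ,\calM)$ follows by intersecting the exactness of the ambient global sequence with the Selmer subgroups: a lift of a Selmer class under $\rmH^1_f(\QQ,\calM)\to\rmH^1_f(\QQ,\calM'')$ is a priori defined only up to $\mathrm{im}\,\delta$, but that indeterminacy already lies inside $\rmH^1_f(\QQ,\calM')$, so it can be removed.

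The main obstacle is the surjectivity of $\rmH^1_f(\QQ,\calM)\to\rmH^1_f(\QQ,\calM'')$. In every situation in which this lemma gets invoked the exact sequence $0\to\calM'\to\calM\to\calM''\to 0$ is in fact split---it comes from the Asai decomposition $\mathrm{As}(\calM_{\pi^{\rmQ}})(-1)=\mathrm{Ad}^0(\calM_{\pi^{\circ}})\oplus\rmE_\lambda/\calO_\lambda(\omega_{\rmF/\QQ})$ and its $\lambda$-power reductions---so the whole sequence is then the direct sum of two exact sequences and the surjectivity is automatic. For the statement in general I would argue as follows: given $c''\in\rmH^1_f(\QQ,\calM'')$, its obstruction $\delta(c'')\in\rmH^2(\QQ,\calM')$ localizes at each $r$ to $\delta_r(\loc_r(c''))$, which vanishes provided the local conditions are right exact in the sequence (an unramified lift at $r\neq l$, and exactness of the crystalline condition at $r=l$); thus $\delta(c'')$ is everywhere locally trivial, hence zero once one knows that the relevant $\Sha^2(\QQ,\calM')$ vanishes, and then $c''$ lifts to some $c\in\rmH^1(\QQ,\calM)$. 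Using the left-half exactness one finally corrects $c$ by an element of $\rmH^1_f(\QQ,\calM')$ so that all of its localizations lie in the local Bloch--Kato subspaces, which lands $c$ in $\rmH^1_f(\QQ,\calM)$. The remaining items---commutativity of the ladder, and compatibility of the local conditions with the morphisms of the sequence---are exactly the routine verifications behind the ``easy diagram chase.''
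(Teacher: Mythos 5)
Your reduction to finite level and your treatment of the left-hand portion of the sequence are essentially the ``easy diagram chase'' the paper has in mind: the two points that actually need checking are that the connecting map $\rmH^{0}(\QQ,\calM^{\prime\prime})\rightarrow\rmH^{1}(\QQ,\calM^{\prime})$ lands in $\rmH^{1}_{f}(\QQ,\calM^{\prime})$ and that a lift of a Selmer class can be corrected by the indeterminacy, and both ultimately rest on the fact that the finite-level local conditions are defined as preimages ($\rmH^{1}_{f}(\QQ_{r},\rmM_{n})=i_{n}^{\ast}\rmH^{1}_{f}(\QQ_{r},\calM)$). Your local claim that $\delta_{r}$ ``factors through $\rmH^{1}_{f}(\QQ_{r},\calM^{\prime})$ by compatibility'' is asserted rather than proved, but that is the sort of routine verification the paper also suppresses; through the term $\rmH^{1}_{f}(\QQ,\calM)$ your argument is sound.

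The genuine gap is the final surjectivity $\rmH^{1}_{f}(\QQ,\calM)\rightarrow\rmH^{1}_{f}(\QQ,\calM^{\prime\prime})\rightarrow0$, and neither of your two routes closes it. Your claim that every invocation of the lemma concerns a split sequence coming from the Asai decomposition is factually wrong: where the lemma is actually used, in the proof of Lemma \ref{lm3}, it is applied to $0\rightarrow\rmM_{n}\rightarrow\rmM\xrightarrow{\varpi^{n}}\varpi^{n}\rmM\rightarrow0$ with $\rmM=\Ad^{0}(\calM_{\pi^{\circ}})$ divisible, and this sequence is not split (compare the $\varpi$-torsion of $\rmM$ and of $\rmM_{n}\oplus\rmM$), so the Asai splitting buys you nothing there. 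Your fallback argument imports hypotheses the lemma does not contain --- exactness of the local conditions and the vanishing of the group you denote $\Sha^{2}(\QQ,\calM^{\prime})$ --- and no unconditional argument can exist: applied to the multiplication-by-$\varpi^{n}$ sequence, the asserted surjection would say that $\rmH^{1}_{f}(\QQ,\calM)$ is $\varpi$-divisible, i.e.\ that the Tate--Shafarevich group $\Sha(\QQ,\calM)$ defined just above the lemma vanishes, which fails in general. So your proposal proves exactness only up to and including the middle term --- which is in fact all that is needed later, e.g.\ for the identification $\rmH^{1}_{f}(\QQ,\rmM_{n})\cong\rmH^{1}_{f}(\QQ,\rmM)[\varpi^{n}]$ in Lemma \ref{lm3} --- but it does not establish the statement as written; to retain the final surjection one must either add a hypothesis (splitness, or exact local conditions together with the vanishing of the relevant global-to-local kernel in $\rmH^{2}(\QQ,\calM^{\prime})$) or drop the terminal zero.
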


\subsection{Statement of the main result}
We now consider the datum given in the beginning of \S 5.2.  Let $f\in\rmS_{2}(\Gamma_{0}(\rmN))$ be a newform of level $\rmN$ whose associated automorphic representation of $\GL_{2}(\Adel)$ is given by $\pi$. Suppose that $\rmN$ admits a decomposition $\rmN=\rmN^{+}\rmN^{-}$. Under the assumption that $\rmN^{-}$ is square-free and consists of odd number of prime divisors, $f$ admits a normalized Jacquet--Langlands transfer  $f^{\dagger}$ as an automorphic form realized in $\calO_{\lambda}[\rmZ(\overline{\rmB})]$. Here normalized means that there is an element $z\in \rmZ(\overline{\rmB})$ such that $f^{\dagger}(z)$ is non-zero modulo $\lambda$. The base change of $f^{\dagger}$ to $\rmF$ can be regarded as an automorphic form realized in $\calO_{\lambda}[\rmZ(\overline{\rmQ})]$ which we will denote by $\phi^{\dagger}$. We will choose $\phi^{\dagger}$ such that it is normalized as well in the same sense for $f^{\dagger}$. In the discussion of \S 5.2, the automorphic form $f^{\dagger}$ is contained in an automorphic representation of $\rmG(\overline{\rmB})(\mathbf{A})$ which is denoted by $\pi^{\circ}=\pi^{\overline{\rmB}}$ and $\phi^{\dagger}$ is  contained in the automorphic representation $\pi^{\overline{\rmQ}}$ of $\rmG(\overline{\rmQ})(\mathbf{A})$ which is the base change of $\pi^{\circ}$ to $\rmF$. Recall also that $\pi^{\overline{\rmQ}}$ admits a Jacquet--Langlands transfer $\pi^{\rmQ}$ to $\rmG(\rmQ)(\mathbf{A})$.  

For the convenience of the reader, we recall some notations concerning Galois modules and period integrals that we will use later:\begin{enumerate}
\item Let $(\rho_{\pi^{\circ}}, \rmV_{\pi^{\circ}})$ be the Galois representation associated to $\pi^{\circ}$ and $\rmT_{\pi^{\circ}}$ be the Galois stable lattice that we have fixed in $\S 5.2$. Then we define the divisible Galois module $\calM_{\pi^{\circ}}$ by
\begin{equation*}
0\rightarrow \rmT_{\pi^{\circ}}\rightarrow \rmV_{\pi^{\circ}}\rightarrow \calM_{\pi^{\circ}}\rightarrow 0.
\end{equation*}

\item Let $(\rho_{\pi^{\rmQ}}, \rmV_{\pi^{\rmQ}})$ be the Galois representation associated to $\pi^{\rmQ}$ and $\rmT_{\pi^{\rmQ}}$ be the Galois stable lattice we have fixed in $\S 5.2$. Then we define the divisible Galois module $\calM_{\pi^{\rmQ}}$ by
\begin{equation*}
0\rightarrow \rmT_{\pi^{\rmQ}}\rightarrow \rmV_{\pi^{\rmQ}}\rightarrow \calM_{\pi^{\rmQ}}\rightarrow 0.
\end{equation*}

\item We have a natural decomposition 
\begin{equation*}
\mathrm{As}(\calM_{\pi^{\rmQ}})(-1)=\mathrm{Sym}^{2}(\calM_{\pi^{\circ}})(-1)\oplus\rmE_{\lambda}/\calO_{\lambda}(\omega_{\rmF/\QQ}).
\end{equation*}

\item Note that $\mathrm{Sym}^{2}(\calM_{\pi^{\circ}})(-1)$ is the same as $\mathrm{Ad}^{0}(\calM_{\pi^{\circ}})$. We will set 
\begin{equation*}
\rmM_{n}=\mathrm{Ad}^{0}(\calM_{\pi^{\circ}}[\lambda^{n}]) 
\end{equation*}
and 
\begin{equation*}
\rmN_{n}=\mathrm{Sym}^{2}(\rmT_{\pi^{\circ}, n}) 
\end{equation*}
for $\rmT_{\pi^{\circ}, n}=\rmT_{\pi^{\circ}}\mod \lambda^{n}$ and $n\geq 1$.

\item Recall we have an embedding $
\vartheta: \rmZ(\overline{\rmB})\rightarrow \rmZ(\overline{\rmQ})$ of Shimura sets as in Lemma \ref{HZ-bar}
and thus we can consider the integral distinguished period defined by
\begin{equation*}
\calP_{\mathrm{dis}}(\phi^{\dagger})=\sum\limits_{z\in \rmZ(\overline{\rmB})}\phi^{\dagger}(z)
\end{equation*}
where the sum is taken over elements of the image of $\vartheta:  \rmZ(\overline{\rmB})\rightarrow \rmZ(\overline{\rmQ})$.  
\end{enumerate}

The following theorem will be the main theorem on bounding the Selmer group using the distinguished period. 
\begin{theorem}\label{main1}
Let $f\in \rmS_{2}(\Gamma_{0}(\rmN))$ be a newform of weight $2$ with $\rmN=\rmN^{+}\rmN^{-}$ such that $\rmN^{-}$ is squarefree and has odd number of prime factors. Let $f^{\dagger}$ be the normalized automorphic form in $\calO_{\lambda}[\rmZ(\overline{\rmB})]$ corresponding to $f$ under the Jacquet--Langlands correspondence and $\phi^{\dagger}$ be the base-change of $f^{\dagger}$ which is also normalized. Let $\nu=\mathrm{ord}_{\lambda}(\calP_{\mathrm{dis}}(\phi^{\dagger}))$ and $\eta=\varpi^{\nu}$. Suppose the following conditions hold.
\begin{enumerate}
\item The residual Galois representation $\overline{\rho}_{\pi^{\circ}}$ is absolutely irreducible;
\item The image of $\overline{\rho}_{\pi^{\circ}}$ contains $\GL_{2}(\FF_{p})$;
\item We further assume that $\rmH^{1}(\Delta_{n}, \rmM_{n})=0$ 
for every $n\geq 1$ where $\Delta_{n}=\Gal(\QQ(\rmM_{n})/\QQ)$ for the splitting field $\QQ(\rmM_{n})$ of the Galois module $\rmM_{n}$.
\end{enumerate}
Then $\eta$ annihilates the Selmer group $\rmH^{1}_{f}(\QQ, \mathrm{Ad}^{0}(\calM_{\pi^{\circ}}))$, moreover we have 
\begin{equation*}
\mathrm{length}_{\calO_{\lambda}}\phantom{.}\rmH^{1}_{f}(\QQ, \mathrm{Ad}^{0}(\calM_{\pi^{\circ}}))\leq \nu.
\end{equation*}
\end{theorem}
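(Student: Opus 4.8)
The plan is to run a Flach--Kolyvagin style Euler system argument with the single family of classes $\kappa^{[p]}$, indexed by admissible primes $p$, already constructed in \S3--\S4. Writing $\rmM_{n}=\mathrm{Ad}^{0}(\calM_{\pi^{\circ}}[\lambda^{n}])$ and using $\rmH^{1}_{f}(\QQ,\mathrm{Ad}^{0}(\calM_{\pi^{\circ}}))=\varinjlim_{n}\rmH^{1}_{f}(\QQ,\rmM_{n})$, it suffices to bound each finite level Selmer group. Since $\mathrm{Ad}^{0}(\calM_{\pi^{\circ}})$ is the Kummer dual of $\mathrm{Ad}^{0}(\rmT_{\pi^{\circ}})(1)$, and since Lemma \ref{iso-asai} together with the cleanness of $d$ shows that $\rmH^{2}_{\pi^{\rmQ}}(\rmX(\rmQ)\otimes\overline{\QQ},\calO_{\lambda}(2))$ is $m(\pi^{\rmQ},d)$ copies of $\mathrm{As}(\rmT_{\pi^{\rmQ}})$, the first step is to project $\kappa^{[p]}$ onto the $\mathrm{Ad}^{0}(\rmT_{\pi^{\circ}})(1)$--summand (via $\mathrm{As}(\rmT_{\pi^{\rmQ}})(-1)\cong\mathrm{Sym}^{2}(\rmT_{\pi^{\circ}})(-1)\oplus\calO_{\lambda}(\omega_{\rmF/\QQ})$) and reduce modulo $\lambda^{n}$, producing a class $\kappa^{[p]}_{n}\in\rmH^{1}(\QQ,\rmM_{n}(1))$. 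The mild discrepancy between $\mathrm{Sym}^{2}$ of the reduction and the reduction of $\mathrm{Sym}^{2}$, i.e.\ between $\rmM_{n}$ and $\rmN_{n}$, is absorbed by the comparison maps and, because $l\geq 5$, does not affect the $\lambda$--adic orders that enter the argument.

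Next I would record the local conditions satisfied by $\kappa^{[p]}_{n}$. Away from $\rmN(\pi^{\rmQ})lp$ it is unramified by the lemma of \S4.2. At $l$ it lies in $\rmH^{1}_{f}(\QQ_{l},\rmM_{n}(1))$: as $l\nmid\rmN d\rmD_{\rmF}$ the surface $\rmX(\rmQ)$ has good reduction at $l$, the Flach element has trivial divisor on the smooth $\ZZ_{l}$--model, and the \'etale Chern class of such a motivic class is crystalline at $l$. At a bad prime $r\mid\rmN^{+}\rmN^{-}$ with $r\neq p$ one needs the local Tate pairing $\langle\loc_{r}(c_{n}),\loc_{r}(\kappa^{[p]}_{n})\rangle_{r}$ to vanish for every $c_{n}\in\rmH^{1}_{f}(\QQ,\rmM_{n})$; for $r\nmid\rmN(\pi^{\rmQ})$ this is the unramified computation of \S4.2, and for the remaining primes it follows from the local structure of $\mathrm{Ad}^{0}(\overline{\rho}_{\pi^{\circ}})$ at $r$ together with the minimality at $\Sigma^{+}$ and ramification at $\Sigma^{-}_{\mathrm{ram}}$ in Assumption \ref{intro-ass1}, which force the finite conditions on the two sides to be exact orthogonal complements. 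Finally, take $p$ to be $n$--admissible; the admissibility condition makes the Frobenius invariants of $\mathrm{Ad}^{0}(\overline{\rho}_{\pi^{\circ}})$ at $p$ one--dimensional, so $\rmH^{1}_{\mathrm{fin}}(\QQ_{p},\rmM_{n})$ and $\rmH^{1}_{\mathrm{sin}}(\QQ_{p},\rmM_{n}(1))$ are free of rank one over $\calO_{\lambda}/\lambda^{n}$ and in perfect local duality, and by Proposition \ref{Tate} the latter is identified with $\calZ_{\pi^{\overline{\rmQ}}}(\overline{\rmQ})/\lambda^{n}$. The reciprocity formula (Proposition \ref{reci}) then shows that $\partial_{p}(\kappa^{[p]}_{n})$ is the image of $\mathbf{1}_{\rmZ(\overline{\rmB})}$; after the standard identification (as in \cite{Wang}) of the local Tate pairing at $p$ with the pairing $(\cdot,\cdot)$ on the Shimura set, and using that $\phi^{\dagger}$ is a normalized generator of $\calZ_{\pi^{\overline{\rmQ}}}(\overline{\rmQ})$, the relation $(\mathbf{1}_{\rmZ(\overline{\rmB})},\phi^{\dagger})=\calP_{\mathrm{dis}}(\phi^{\dagger})$ of valuation $\nu$ shows that $\partial_{p}(\kappa^{[p]}_{n})$ generates a cyclic submodule of $\rmH^{1}_{\mathrm{sin}}(\QQ_{p},\rmM_{n}(1))$ of index $\varpi^{\min(\nu,n)}$.

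Now I would feed this into global duality. For $c_{n}\in\rmH^{1}_{f}(\QQ,\rmM_{n})$, the Poitou--Tate relation $\sum_{v}\langle\loc_{v}(c_{n}),\loc_{v}(\kappa^{[p]}_{n})\rangle_{v}=0$ together with the previous paragraph kills every term with $v\neq p$ (unramified--unramified, crystalline--crystalline at $l$, finite--finite at the remaining bad primes), leaving $\langle\loc_{p}(c_{n}),\partial_{p}(\kappa^{[p]}_{n})\rangle_{p}=0$. Perfect duality of the rank--one modules at $p$ then gives $\varpi^{\nu}\loc_{p}(c_{n})=0$, i.e.\ $\loc_{p}(c_{n})\in\varpi^{\,n-\nu}(\calO_{\lambda}/\lambda^{n})$ when $n\geq\nu$. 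A Chebotarev argument in the style of Kolyvagin--Bertolini--Darmon finishes: by the large image hypothesis on $\overline{\rho}_{\pi^{\circ}}$ and the vanishing $\rmH^{1}(\Delta_{n},\rmM_{n})=0$, restriction $\rmH^{1}_{f}(\QQ,\rmM_{n})\hookrightarrow\Hom(\rmG_{\QQ(\rmM_{n})},\rmM_{n})$ is injective, so one can choose $p$ simultaneously $n$--admissible and such that $\loc_{p}$ is injective on the finite group $\rmH^{1}_{f}(\QQ,\rmM_{n})$; applying the above to its Flach class yields
\[
\rmH^{1}_{f}(\QQ,\rmM_{n})\hookrightarrow\varpi^{\,n-\nu}(\calO_{\lambda}/\lambda^{n})\cong\calO_{\lambda}/\lambda^{\min(\nu,n)},
\]
so $\eta=\varpi^{\nu}$ annihilates $\rmH^{1}_{f}(\QQ,\rmM_{n})$ and $\mathrm{length}_{\calO_{\lambda}}\rmH^{1}_{f}(\QQ,\rmM_{n})\leq\nu$. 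Taking the direct limit over $n$ gives the theorem.

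The step I expect to be the main obstacle is the simultaneous control at the bad primes $r\mid\rmN^{+}\rmN^{-}$ with $r\neq p,l$ at which $\mathrm{As}(\rmV_{\pi^{\rmQ}})$ is ramified: there the clean geometric vanishing $\mathrm{div}_{r}(\Theta^{[p]})=0$ no longer directly computes the singular residue, and one must instead run a local deformation--theoretic analysis of $\mathrm{Ad}^{0}(\overline{\rho}_{\pi^{\circ}})$ governed by the $\Sigma^{-}_{\mathrm{ram}}$ versus $\Sigma^{-}_{\mathrm{mix}}$ dichotomy and by minimality at $\Sigma^{+}$. Secondary technical points are the crystallinity of $\loc_{l}(\kappa^{[p]}_{n})$ and the exact matching of the local Tate pairing at $p$ with $(\cdot,\cdot)$ on the Shimura set, so that the valuation $\nu$ and not a shifted version of it appears; both should follow from good reduction at $l$, the cleanness hypothesis, and the computations of \cite{Wang}.
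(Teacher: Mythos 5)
The first two thirds of your plan coincide with the paper's argument: the projection of $\kappa^{[p]}$ to the $\Sym^{2}$--component modulo $\lambda^{n}$, the unramifiedness away from the level, the rank--one computation of the singular quotient at an $n$--admissible $p$, the use of the reciprocity formula of Proposition \ref{reci} to see that $\partial_{p}(\kappa^{[p]}_{n})$ generates a submodule of index (at most) $\varpi^{\nu}$, and the Poitou--Tate step giving $\eta\,\loc_{p}(s)=0$ for Selmer classes $s$ are exactly Lemma \ref{index} and Lemma \ref{lm1}. One small mismatch: at the primes $r\mid\rmN$, $r\neq p,l$, you propose a deformation--theoretic analysis resting on minimality at $\Sigma^{+}$ and ramification at $\Sigma^{-}_{\mathrm{ram}}$, but these are \emph{not} hypotheses of Theorem \ref{main1} (they enter only in \S 6.4 for Theorem \ref{main2}); the paper instead verifies that $\loc_{r}(\kappa^{[p]}_{n})$ lies in the finite/Bloch--Kato part at $r\mid\rmN$ and at $l$ by carrying over Flach's local computations, and then uses only the general orthogonality of the Bloch--Kato conditions under local Tate duality, so no such extra input is needed or available.

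The genuine gap is your endgame. You assert that one can choose a single $n$--admissible prime $p$ such that $\loc_{p}$ is injective on all of $\rmH^{1}_{f}(\QQ,\rmM_{n})$, citing a Kolyvagin--Bertolini--Darmon style Chebotarev argument. Since $\rmH^{1}_{\mathrm{fin}}(\QQ_{p},\rmM_{n})$ is free of rank one over $\calO_{\lambda}/\lambda^{n}$, such injectivity would force the Selmer group to be cyclic, which is not known a priori; and Chebotarev lets you prescribe one Frobenius element, hence guarantees nonvanishing of the localization of a single given class (or of one cyclic submodule), not simultaneous separation of all elements of a possibly non--cyclic group. So the sentence ``one can choose $p$ \dots such that $\loc_{p}$ is injective on $\rmH^{1}_{f}(\QQ,\rmM_{n})$'' is an unjustified leap, and it is precisely the point where the paper proceeds differently. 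There, the annihilation is obtained in two steps: Lemma \ref{lm1} gives $\eta\,\rmH^{1}_{f}(\QQ,\rmM_{n})\subset\rmH^{1}_{\{p\}}(\QQ,\rmM_{n})$, and Lemma \ref{lm2} disposes of classes locally trivial at $p$ by an explicit cocycle--evaluation Chebotarev argument that uses hypothesis (3) ($\rmH^{1}(\Delta_{n},\rmM_{n})=0$), the irreducibility of $\rmM_{n}$, and $\rmM_{n}\neq(\tau-1)\rmM_{n}$ coming from admissibility --- no choice of $p$ adapted to the Selmer group is made. The length bound is then \emph{not} obtained by embedding the Selmer group into the local torsion submodule, but via the Bockstein pairing attached to $0\rightarrow\rmN_{\nu}\rightarrow\rmN_{2\nu}\rightarrow\rmN_{\nu}\rightarrow 0$ at a $\nu$--admissible prime (Lemma \ref{lm4}), together with the identification $\rmH^{1}_{f}(\QQ,\Ad^{0}(\calM_{\pi^{\circ}}))\cong\rmH^{1}_{f}(\QQ,\rmM_{\nu})$ of Lemma \ref{lm3}, which yields an injection into $\Hom(\langle\kappa^{[p]}_{\nu}\rangle,\QQ_{l}/\ZZ_{l})$, a module of length at most $\nu$. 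To repair your write--up you should either prove the injectivity statement you need (which is in substance what the paper's Lemma \ref{lm2} supplies, by a different mechanism than a Chebotarev choice of $p$) or adopt the paper's two--step structure of annihilation followed by the Bockstein--pairing length estimate.
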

 
\subsection{The Flach system argument} 
To prove the main theorem, we will show that under the assumptions of the main theorem, $\eta$ annihilates each finite Selmer group $\rmH^{1}_{f}(\QQ, \rmM_{n})$ with $n\geq1$. Here the argument follows closely that of \cite{Flach} and \cite{Weston1}. We define 
\begin{equation*}
\rmH^{2}_{\pi^{\rmQ}}(\rmX(\rmQ)\otimes\overline{\QQ},\calO_{n}(2))
\end{equation*}
to be the reduction of $\rmH^{2}_{\pi^{\rmQ}}(\rmX(\rmQ)\otimes\overline{\QQ},\calO_{\lambda}(2))$ modulo $\lambda^{n}$.
\begin{lemma}
For each $n\geq 1$, the Galois module
\begin{equation*}
\rmH^{2}_{\pi^{\rmQ}}(\rmX(\rmQ)\otimes\overline{\QQ},\calO_{n}(1))
\end{equation*}
is isomorphic to $m(\pi^{\rmQ}, d)$ copies of $\mathrm{Sym}^{2}(\rmT_{\pi^{\circ}, n})(-1)\oplus\calO_{n}(\omega_{\rmF/\QQ})$. In particular, there is a projection from 
$\rmH^{2}_{\pi^{\rmQ}}(\rmX(\rmQ)\otimes\overline{\QQ},\calO_{n}(2))$
to $\rmN_{n}=\mathrm{Sym}^{2}(\rmT_{\pi^{\circ}, n})$.
\end{lemma}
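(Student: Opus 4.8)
The plan is to deduce this from the clean decomposition of $\rmH^{2}_{\pi^{\rmQ}}(\rmX(\rmQ)\otimes\overline{\QQ},\calO_{\lambda}(1))$ together with the integral Asai decomposition, by reducing everything modulo $\lambda^{n}$ and using that forming $\mathrm{Sym}^{2}$ and applying a Tate twist both commute with this reduction.

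First I would recall that $d$ has been chosen clean for $\rmT_{\pi^{\rmQ}}$, so that as $\calO_{\lambda}[\rmG_{\QQ}]$-modules
\begin{equation*}
\rmH^{2}_{\pi^{\rmQ}}(\rmX(\rmQ)\otimes\overline{\QQ},\calO_{\lambda}(1))\cong m(\pi^{\rmQ},d)\,\mathrm{As}(\rmT_{\pi^{\rmQ}})(-1).
\end{equation*}
Both sides are finite free over the discrete valuation ring $\calO_{\lambda}$, since $\mathrm{As}(\rmT_{\pi^{\rmQ}})(-1)$ is a lattice; hence the reduction $-\otimes_{\calO_{\lambda}}\calO_{n}$, with $\calO_{n}=\calO_{\lambda}/\lambda^{n}$, is exact on them and preserves finite direct sums. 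Since by definition $\rmH^{2}_{\pi^{\rmQ}}(\rmX(\rmQ)\otimes\overline{\QQ},\calO_{n}(j))$ is the reduction of $\rmH^{2}_{\pi^{\rmQ}}(\rmX(\rmQ)\otimes\overline{\QQ},\calO_{\lambda}(j))$ modulo $\lambda^{n}$, this gives
\begin{equation*}
\rmH^{2}_{\pi^{\rmQ}}(\rmX(\rmQ)\otimes\overline{\QQ},\calO_{n}(1))\cong m(\pi^{\rmQ},d)\,\bigl(\mathrm{As}(\rmT_{\pi^{\rmQ}})(-1)\otimes_{\calO_{\lambda}}\calO_{n}\bigr).
\end{equation*}

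Next I would invoke the integral splitting $\mathrm{As}(\rmT_{\pi^{\rmQ}})(-1)\cong\mathrm{Sym}^{2}(\rmT_{\pi^{\circ}})(-1)\oplus\calO_{\lambda}(\omega_{\rmF/\QQ})$ (valid since $l\geq 5$), reduce it modulo $\lambda^{n}$ — harmless, both summands being free over $\calO_{\lambda}$ — and note that, $\rmT_{\pi^{\circ}}$ being finite free over $\calO_{\lambda}$, the symmetric square commutes with the base change $-\otimes_{\calO_{\lambda}}\calO_{n}$, so that $\mathrm{Sym}^{2}(\rmT_{\pi^{\circ}})\otimes_{\calO_{\lambda}}\calO_{n}\cong\mathrm{Sym}^{2}(\rmT_{\pi^{\circ},n})$, and likewise the Tate twist commutes with reduction. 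This yields
\begin{equation*}
\mathrm{As}(\rmT_{\pi^{\rmQ}})(-1)\otimes_{\calO_{\lambda}}\calO_{n}\cong\mathrm{Sym}^{2}(\rmT_{\pi^{\circ},n})(-1)\oplus\calO_{n}(\omega_{\rmF/\QQ}),
\end{equation*}
and combining this with the previous display proves the first assertion. For the ``in particular'', I would twist through by $1$: $\rmH^{2}_{\pi^{\rmQ}}(\rmX(\rmQ)\otimes\overline{\QQ},\calO_{n}(2))$ is the $(1)$-twist of $\rmH^{2}_{\pi^{\rmQ}}(\rmX(\rmQ)\otimes\overline{\QQ},\calO_{n}(1))$, hence isomorphic to $m(\pi^{\rmQ},d)$ copies of $\rmN_{n}\oplus\calO_{n}(\omega_{\rmF/\QQ})(1)$ with $\rmN_{n}=\mathrm{Sym}^{2}(\rmT_{\pi^{\circ},n})$; composing the projection onto one of these $m(\pi^{\rmQ},d)$ summands with the projection off the $\calO_{n}(\omega_{\rmF/\QQ})(1)$ factor produces the required map.

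The only substantive ingredients here are the two input decompositions — the cleanness of $d$, which rules out $d$-old congruences and secures the multiplicity-$m(\pi^{\rmQ},d)$ shape, and the integral splitting off of $\mathrm{Sym}^{2}(\rmT_{\pi^{\circ}})(-1)$ from the Asai lattice, which uses $l\geq 5$ — and both are already established above; so there is no real obstacle, the remaining points (freeness over the DVR $\calO_{\lambda}$, exactness of reduction, and compatibility of $\mathrm{Sym}^{2}$ and Tate twists with base change) being routine verifications.
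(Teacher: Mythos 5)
Your argument is correct and follows the same route as the paper: it combines the $d$-cleanness decomposition of $\rmH^{2}_{\pi^{\rmQ}}(\rmX(\rmQ)\otimes\overline{\QQ},\calO_{\lambda}(1))$ with the integral splitting $\mathrm{As}(\rmT_{\pi^{\rmQ}})(-1)\cong\mathrm{Sym}^{2}(\rmT_{\pi^{\circ}})(-1)\oplus\calO_{\lambda}(\omega_{\rmF/\QQ})$ and then reduces modulo $\lambda^{n}$, the freeness you invoke being exactly the torsion-freeness the paper extracts from Proposition \ref{CT}. The only difference is that you spell out the routine compatibilities (exactness of reduction, $\mathrm{Sym}^{2}$ and Tate twists commuting with base change) that the paper leaves implicit.
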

\begin{proof}
Recall that we have an isomorphism 
\begin{equation*}
\rmH^{2}_{\pi^{\rmQ}}(\rmX(\rmQ)\otimes\overline{\QQ},\calO_{\lambda}(1))\cong m(\pi^{\rmQ}, d)(\mathrm{Sym}^{2}\rmT_{\pi^{\circ}}(-1)\oplus \calO_{\lambda}(\omega_{\rmF/\QQ}))
\end{equation*}
by the definition of $d$-cleaness of $\rmT_{\pi^{\rmQ}}$.
The lemma follows from this immediately from Proposition \ref{CT} which implies that $\rmH^{2}_{\pi^{\rmQ}}(\rmX(\rmQ)\otimes\overline{\QQ},\calO_{\lambda}(1))$ is torsion free.
\end{proof}
We consider the Flach class
\begin{equation*}
\kappa^{[p]}\in \rmH^{1}(\QQ, \rmH^{2}_{\pi^{\rmQ}}(\rmX(\rmQ)\otimes\overline{\QQ},\calO_{\lambda}(2)))
\end{equation*}
defined as in \ref{Flach-class} and we will use this class to construct annihilator of the Selmer group $\rmH^{1}_{f}(\QQ, \mathrm{Ad}^{0}(\calM_{\pi^{\circ}})$. By the above lemma, we can project this class to $\rmH^{1}(\QQ, \rmN_{n})$ and we will denote by $\kappa^{[p]}_{n}$ the resulting class. We also recall that $\eta=\varpi^{\nu}$ where $\nu=\ord_{\lambda}(\calP_{\mathrm{dis}}(\phi^{\dagger}))$.

\begin{lemma}\label{index}
Let $p$ be an $n$-admissible prime for $\rmT_{\pi^{\rmQ}}$. 
\begin{enumerate}
\item The singular quotient $\rmH^{1}_{\sin}(\QQ_{p}, \rmN_{n})$ is free of rank one
over $\calO_{n}$. 
\item The quotient of $\rmH^{1}_{\sin}(\QQ_{p}, \rmN_{n})$ by the module generated by $\partial_{p}(\kappa^{[p]}_{n})$ is annihlated by $\eta$.
\end{enumerate}
\end{lemma}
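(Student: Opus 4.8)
The plan is to prove (1) by an explicit computation of $\rmH^{1}_{\sin}(\QQ_{p},\rmN_{n})$ via Frobenius eigenvalues, and (2) by feeding the reciprocity formula of Proposition~\ref{reci} into an additivity‑of‑valuations argument against $\phi^{\dagger}$. For (1), first observe that $p$, being inert in $\rmF$, is prime to $\rmN\rmD_{\rmF}$, so $\rho_{\pi^{\circ}}$ — hence $\rmN_{n}=\mathrm{Sym}^{2}(\rmT_{\pi^{\circ},n})$ — is unramified at $p$. For any finite unramified $\rmG_{\QQ_{p}}$‑module $\rmM$ with $p\neq l$, inflation--restriction along $\rmG_{\QQ_{p}}\twoheadrightarrow\rmG_{\FF_{p}}$ together with $\rmH^{1}(\rmI_{p},\rmM)=\Hom(\rmI^{\mathrm{tame}}_{p},\rmM)\cong\rmM(-1)$ gives a canonical isomorphism $\rmH^{1}_{\sin}(\QQ_{p},\rmM)\cong(\rmM(-1))^{\rmG_{\FF_{p}}}$; with $\rmM=\rmN_{n}$ this reads $\rmH^{1}_{\sin}(\QQ_{p},\rmN_{n})\cong(\rmN_{n}(-1))^{\mathrm{Fr}_{p}}$, where $\rmN_{n}(-1)=\mathrm{Ad}^{0}(\calM_{\pi^{\circ}}[\lambda^{n}])$. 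Now $\mathrm{Fr}_{p}$ acts on $\rmT_{\pi^{\circ}}$ with characteristic polynomial $T^{2}-a_{p}(f)T+p$, roots $\alpha,\beta$ (so $\alpha\beta=p$), and $a_{p^{2}}(\pi^{\rmQ})=a_{p}(f)^{2}-2p$, so the admissibility of $p$ makes $a_{p^{2}}(\pi^{\rmQ})-2p=(\alpha-\beta)^{2}$ a $\lambda$‑adic unit (as used in the proof of Proposition~\ref{Tate}); thus $\overline{\alpha}\neq\overline{\beta}$ in $k_{\lambda}$. Since $\overline{\alpha}\neq\overline{\beta}$, $\mathrm{Fr}_{p}$ is diagonalizable over $\calO_{\lambda}$ (or over its unramified quadratic extension), so on $\rmN_{n}(-1)$ it is $\mathrm{diag}(\alpha^{2}/p,\,1,\,\beta^{2}/p)$ with all entries units and the two outer ones congruent modulo $\lambda$ to $\overline{\alpha}/\overline{\beta}\neq1$ and $\overline{\beta}/\overline{\alpha}\neq1$; hence $(\rmN_{n}(-1))^{\mathrm{Fr}_{p}}$ is the free rank‑one $\calO_{n}$‑summand spanned by the middle eigenline, proving (1).

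For (2), by the preceding lemma $\kappa^{[p]}_{n}$ is the image of the Flach class $\kappa^{[p]}$ under reduction modulo $\lambda^{n}$ followed by the projection onto the $\mathrm{Sym}^{2}$‑summand $\rmN_{n}$; since $\partial_{p}$ is functorial, $\partial_{p}(\kappa^{[p]}_{n})$ is the image of $\partial_{p}(\kappa^{[p]})$ under the corresponding map $\rmH^{1}_{\sin}(\QQ_{p},\rmH^{2}_{\pi^{\rmQ}}(\rmX(\rmQ)\otimes\overline{\QQ}_{p},\calO_{\lambda}(2)))\to\rmH^{1}_{\sin}(\QQ_{p},\rmN_{n})$. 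By the reciprocity formula (Proposition~\ref{reci}, through diagram~\eqref{diagram-p}), $\partial_{p}(\kappa^{[p]})$ corresponds to $\mathbf{1}_{\rmZ(\overline{\rmB})}$ under $\rmH^{1}_{\sin}(\QQ_{p},\rmH^{2}_{\pi^{\rmQ}}(\rmX(\rmQ)\otimes\overline{\QQ}_{p},\calO_{\lambda}(2)))\cong\calZ_{\pi^{\overline{\rmQ}}}(\overline{\rmQ})$, and $(\mathbf{1}_{\rmZ(\overline{\rmB})},\phi^{\dagger})=\calP_{\mathrm{dis}}(\phi^{\dagger})$. The step to justify is that the cycle‑class isomorphism of Proposition~\ref{Tate}, the projection onto the $\mathrm{Sym}^{2}$‑summand (the character summand $\calO_{\lambda}(\omega_{\rmF/\QQ})$ having no $\mathrm{Fr}_{p}$‑invariants because $p$ is inert), and reduction modulo $\lambda^{n}$ together transport the mass pairing $(\cdot,\cdot)$ on $\calZ_{\pi^{\overline{\rmQ}}}(\overline{\rmQ})$ to local Tate duality on $\rmH^{1}_{\sin}(\QQ_{p},\rmN_{n})$, up to a unit; granting this, pairing the rank‑one module $\rmH^{1}_{\sin}(\QQ_{p},\rmN_{n})$ against the class of $\phi^{\dagger}$ yields an $\calO_{n}$‑linear functional taking $\partial_{p}(\kappa^{[p]}_{n})$ to $\calP_{\mathrm{dis}}(\phi^{\dagger})\bmod\lambda^{n}$. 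If $\nu\geq n$ then $\eta=\varpi^{\nu}$ is zero in $\calO_{n}$ and (2) is trivial; otherwise $\ord_{\lambda}\calP_{\mathrm{dis}}(\phi^{\dagger})=\nu$, and writing $\rmH^{1}_{\sin}(\QQ_{p},\rmN_{n})=\calO_{n}g$ and $\partial_{p}(\kappa^{[p]}_{n})=c\,g$, additivity of the valuation forces $\ord_{\lambda}(c)\leq\nu$, whence $\rmH^{1}_{\sin}(\QQ_{p},\rmN_{n})/\langle\partial_{p}(\kappa^{[p]}_{n})\rangle\cong\calO_{n}/(c)$ is annihilated by $\eta$.

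The hard part will be this transport of pairings: one must check that the cycle‑class isomorphism of Proposition~\ref{Tate} carries the cup‑product/intersection pairing on the cohomology of the supersingular locus (whose components are the projective lines meeting at superspecial points, as in Lemma~\ref{RZ-space} and Proposition~\ref{ss-locus}) to the mass pairing on the Shimura set on one side and to local Tate duality on the other, and that this is compatible with reduction modulo $\lambda^{n}$, with the $\mathrm{Sym}^{2}$‑projection, and with the multiplicity $m(\pi^{\rmQ},d)$ and the character summand $\calO_{\lambda}(\omega_{\rmF/\QQ})$. Everything else — part (1) and the reciprocity formula — is then essentially formal.
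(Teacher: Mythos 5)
Your treatment of (1) is the paper's argument made explicit: the paper likewise identifies $\rmH^{1}_{\sin}(\QQ_{p},\rmN_{n})$ with $(\rmN_{n}(-1))^{\rmG_{\FF_{p}}}$ and observes that, by admissibility, the Frobenius eigenvalue $1$ occurs exactly once; your reading of admissibility as a mod-$\lambda$ non-congruence (so that $(\alpha-\beta)^{2}=a_{p^{2}}(\pi^{\rmQ})-2p$ is a unit) is the one the paper itself invokes in the proof of Proposition \ref{Tate}, and it is what freeness over $\calO_{n}$ genuinely requires. For (2) you also follow the paper, whose entire proof is the appeal to Proposition \ref{reci}; but you have made the final step look harder than it is. The "transport of pairings" you grant --- that the cycle class isomorphism carries the mass pairing on $\calZ_{\pi^{\overline{\rmQ}}}(\overline{\rmQ})$ to local Tate duality up to a unit, compatibly with the intersection pairing on the supersingular locus --- is not needed here: local Tate duality plays no role in this lemma (it enters only later, in Lemmas \ref{lm1} and \ref{lm4}). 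All the deduction uses is that $(\cdot,\phi^{\dagger})$ is an $\calO_{\lambda}$-linear functional with values in $\calO_{\lambda}$ on $\calZ_{\pi^{\overline{\rmQ}}}(\overline{\rmQ})\cong \rmH^{1}_{\sin}(\QQ_{p},\rmH^{2}_{\pi^{\rmQ}}(\rmX(\rmQ)\otimes\overline{\QQ}_{p},\calO_{\lambda}(2)))$, that the $\calO_{\lambda}(\omega_{\rmF/\QQ})$-summand contributes nothing to the Frobenius invariants (since $p$ is inert and $l$ is odd, $\omega_{\rmF/\QQ}(\mathrm{Fr}_{p})=-1$), and the reciprocity formula: if the component of $\partial_{p}(\kappa^{[p]})$ along the rank-one piece were $\varpi^{c}$ times a generator with $c>\nu$, linearity and integrality alone would force $\varpi^{c}\mid\calP_{\mathrm{dis}}(\phi^{\dagger})$, a contradiction; reducing mod $\lambda^{n}$ gives (2), the case $\nu\geq n$ being trivial as you note. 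No unimodularity, no comparison with cup product, and no duality statement has to be verified, so your "hard part" dissolves. The only point that remains implicit --- in your write-up and in the paper alike --- is the compatibility of the chosen projection onto $\rmN_{n}$ with this functional when the multiplicity $m(\pi^{\rmQ},d)$ exceeds $1$.
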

\begin{proof}
By the definition of an $n$-admissible prime, it is easily calculated that among the Frobenius eigenvalues at $p$ for $\rmN_{n}$, the number of times the value $1$ can only appear once. Since we have 
\begin{equation*}
\rmH^{1}_{\sin}(\QQ_{p}, \rmN_{n})=\Hom(\ZZ_{l}(1),  \rmN_{n})^{\rmG_{\FF_{p}}}=(\rmN_{n}(-1))^{\rmG_{\FF_{p}}}, 
\end{equation*}
$\rmH^{1}_{\sin}(\QQ_{p}, \rmN_{n})$ is free of rank one over $\calO_{n}$.  It follows from the reciprocity law proved in Proposition \ref{reci} that the quotient of $\rmH^{1}_{\sin}(\QQ_{p}, \rmN_{n})$ by the module generated by $\partial_{p}(\kappa^{[p]}_{n})$ is annihilated by $\eta$. 
\end{proof}

\begin{lemma}\label{lm1}
Let $p$ be an $n$-admissible prime for $\rmT_{\pi^{\rmQ}}$ and define 
\begin{equation*}
\rmH^{1}_{\{p\}}(\QQ,\rmM_{n})=\mathrm{ker}\{\rmH^{1}(\QQ,\rmM_{n})\rightarrow \rmH^{1}(\QQ_{p},\rmM_{n})\}.
\end{equation*}
Then we have 
\begin{equation*}
\eta\rmH^{1}_{f}(\QQ, \rmM_{n})\subset \rmH^{1}_{\{p\}}(\QQ,\rmM_{n}).
\end{equation*}
\end{lemma}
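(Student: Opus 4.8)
The plan is to run the Flach--Kolyvagin descent built on the reciprocity law of Proposition~\ref{reci} and the local computation of Lemma~\ref{index}. Fix $n\geq1$, the $n$-admissible prime $p$, and a class $c\in\rmH^{1}_{f}(\QQ,\rmM_{n})$; the goal is to show $\eta\cdot\loc_{p}(c)=0$ in $\rmH^{1}(\QQ_{p},\rmM_{n})$, where $\eta=\varpi^{\nu}$. Since $\rho_{\pi^{\circ}}$ is unramified at $p$ (as $p\nmid\rmN$), so is $\rmM_{n}$, and as $c$ lies in the Bloch--Kato Selmer group we get $\loc_{p}(c)\in\rmH^{1}_{f}(\QQ_{p},\rmM_{n})=\rmH^{1}_{\mathrm{fin}}(\QQ_{p},\rmM_{n})$. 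From the identity $\mathrm{Sym}^{2}(\calM_{\pi^{\circ}})(-1)\cong\mathrm{Ad}^{0}(\calM_{\pi^{\circ}})$ recalled earlier (hence $\rmN_{n}\cong\rmM_{n}(1)$ modulo $\lambda^{n}$) together with the self-duality of $\mathrm{Ad}^{0}$ under the trace form, one has $\rmN_{n}\cong\rmM_{n}^{\vee}(1)$, so local Tate duality furnishes a perfect pairing $\langle\cdot,\cdot\rangle_{p}\colon\rmH^{1}(\QQ_{p},\rmM_{n})\times\rmH^{1}(\QQ_{p},\rmN_{n})\to\calO_{n}$ under which the unramified subgroups are exact annihilators; the induced pairing $\rmH^{1}_{\mathrm{fin}}(\QQ_{p},\rmM_{n})\times\rmH^{1}_{\mathrm{sin}}(\QQ_{p},\rmN_{n})\to\calO_{n}$ is then again perfect. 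It therefore suffices to show $\eta\,\langle\loc_{p}(c),y\rangle_{p}=0$ for every $y\in\rmH^{1}_{\mathrm{sin}}(\QQ_{p},\rmN_{n})$.

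By Lemma~\ref{index}(2), $p$ being $n$-admissible, $\eta y$ lies in the $\calO_{n}$-line spanned by $\partial_{p}(\kappa^{[p]}_{n})$, say $\eta y=\alpha\,\partial_{p}(\kappa^{[p]}_{n})$ with $\alpha\in\calO_{n}$. Then $\eta\langle\loc_{p}(c),y\rangle_{p}=\alpha\,\langle\loc_{p}(c),\partial_{p}(\kappa^{[p]}_{n})\rangle_{p}$, and since $\loc_{p}(c)$ is unramified this finite--singular pairing agrees with the full local cup product $\alpha\,\langle\loc_{p}(c),\loc_{p}(\kappa^{[p]}_{n})\rangle_{p}$. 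So the whole statement reduces to the vanishing $\langle\loc_{p}(c),\loc_{p}(\kappa^{[p]}_{n})\rangle_{p}=0$.

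For this I would invoke the global reciprocity (Poitou--Tate sum formula) for the cup product of $c\in\rmH^{1}(\QQ,\rmM_{n})$ and $\kappa^{[p]}_{n}\in\rmH^{1}(\QQ,\rmN_{n})$ valued in $\rmH^{2}(\QQ,\calO_{n}(1))$, namely $\sum_{v}\langle\loc_{v}(c),\loc_{v}(\kappa^{[p]}_{n})\rangle_{v}=0$, and then check that every term with $v\neq p$ dies. The archimedean term vanishes because $l$ is odd. For $v\nmid l\rmN\rmD_{\rmF}p\infty$, $c$ is unramified (Selmer condition) and $\kappa^{[p]}_{n}$ is unramified by the local analysis of the Flach class --- its singular residue at such $v$ is the reduction of $\mathrm{div}_{v}(\Theta^{[p]})=0$ --- so the two sit in mutually annihilating finite subspaces. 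For $v=l$, $c$ is crystalline since $c\in\rmH^{1}_{f}$, while $\loc_{l}(\kappa^{[p]}_{n})$ is crystalline because it is the reduction of the $\lambda$-adic \'etale Chern class of a motivic class on the proper smooth surface $\rmX(\rmQ)$, hence lands in $\rmH^{1}_{f}(\QQ_{l},\rmN_{n})$; the Bloch--Kato orthogonality $\rmH^{1}_{f}(\QQ_{l},\rmM_{n})^{\perp}=\rmH^{1}_{f}(\QQ_{l},\rmM_{n}^{\vee}(1))=\rmH^{1}_{f}(\QQ_{l},\rmN_{n})$ then kills that term. Finally, for the ramified primes $v\mid\rmN\rmD_{\rmF}$ with $v\neq p$, one needs $\loc_{v}(\kappa^{[p]}_{n})$ to lie in the Bloch--Kato local condition $\rmH^{1}_{f}(\QQ_{v},\rmN_{n})$, against which $\loc_{v}(c)\in\rmH^{1}_{f}(\QQ_{v},\rmM_{n})$ is orthogonal; this is exactly the point where the Selmer conditions at the primes of $\Sigma^{+}\cup\Sigma^{-}_{\mathrm{ram}}\cup\Sigma^{-}_{\mathrm{mix}}$ (and of $\rmD_{\rmF}$) have to be matched with the local behaviour of the Hirzebruch--Zagier cycle class, as in \cite{Flach} and \cite{Weston1}. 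Combining, $\langle\loc_{p}(c),\loc_{p}(\kappa^{[p]}_{n})\rangle_{p}=0$, whence $\eta\loc_{p}(c)=0$ and $\eta c\in\rmH^{1}_{\{p\}}(\QQ,\rmM_{n})$, as claimed.

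The formal ingredients --- the duality identification $\rmN_{n}\cong\rmM_{n}^{\vee}(1)$, the reciprocity sum formula, and the fact that $c$ being unramified at $p$ turns the $p$-term into a finite--singular pairing --- are routine. The real work, and what I expect to be the main obstacle, is the control of the local terms away from $p$: proving the crystallinity of $\loc_{l}(\kappa^{[p]}_{n})$ (compatibility of the \'etale realization of the Abel--Jacobi map with the finite local condition at $l$, via $p$-adic Hodge theory) and, more delicately, arranging the Selmer conditions at the bad primes so that Assumption~\ref{intro-ass1} forces $\loc_{v}(\kappa^{[p]}_{n})$ into the condition orthogonal to $\loc_{v}(c)$. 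Granting Lemma~\ref{index} and global duality, everything else is formal.
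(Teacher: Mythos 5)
Your argument is essentially the paper's proof: local Tate duality between $\rmM_{n}$ and $\rmN_{n}$, the Poitou--Tate sum formula applied to $c$ and $\kappa^{[p]}_{n}$, vanishing of all terms away from $p$, and then Lemma \ref{index} plus the perfectness of the finite--singular pairing at $p$ to conclude $\eta\,\mathrm{loc}_{p}(c)=0$. The step you flag as the main obstacle (the local conditions at primes dividing $\rmN$ and at $l$) is exactly what the paper disposes of by asserting that the proofs of Lemmas 2.8 and 2.10 of \cite{Flach} carry over, so there is no substantive difference in route.
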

\begin{proof}
We consider the element $\kappa^{[p]}_{n}$ in $\rmH^{1}(\QQ,\rmN_{n})$. We have verified that $\mathrm{loc}_{r}(\kappa^{[p]}_{n})$ lies in the finite part for all $r\nmid p\rmN$. On the other hand, the same proofs for \cite[Lemma 2.8]{Flach} and \cite[Lemma 2.10]{Flach} carry over here and shows that $\mathrm{loc}_{r}(\kappa^{[p]}_{n})$ lie in $ \rmH^{1}_{f}(\QQ_{r},\rmM_{n})$ for $r\mid \rmN$. Under the local Tate duality
\begin{equation*}
\langle\cdot,\cdot\rangle_{r}: \rmH^{1}(\QQ_{r},\rmN_{n})\times \rmH^{1}(\QQ_{r},\rmM_{n})\rightarrow \calO_{n},
\end{equation*}
it is well--known that our local conditions $\rmH^{1}_{f}(\QQ_{r},\rmN_{n})$ are orthogonal to  $\rmH^{1}_{f}(\QQ_{r},\rmM_{n})$ and at $r=p$ induces a perfect pairing 
\begin{equation*}
\langle\cdot,\cdot\rangle_{p}: \rmH^{1}_{\mathrm{sin}}(\QQ_{p},\rmN_{n})\times \rmH^{1}_{\mathrm{fin}}(\QQ_{p},\rmM_{n})\rightarrow \calO_{n}.
\end{equation*}
Let $s$ be any element in $\rmH^{1}_{f}(\QQ, \rmM_{n})$, then by global class field theory
\begin{equation*}
\sum_{r}\langle \mathrm{loc}_{r}(s), \mathrm{loc}_{r}(\kappa^{[p]})\rangle_{r}=0.
\end{equation*}
This identity reduces to $\langle\mathrm{loc}_{p}(s), \mathrm{loc}_{p}(\kappa^{[p]})\rangle_{p}=0$ by the discussions above. It follows that $\langle\mathrm{loc}_{p}(s), \partial_{p}(\kappa^{[p]})\rangle_{p}=0$. Since we know that $\eta\rmH^{1}_{\sin}(\QQ_{p}, \rmN_{n})$ is contained in the line generated by $\partial_{p}\kappa^{[p]}$ by Lemma \ref{index}, $\eta\mathrm{loc}_{p}(s)$ has to vanish by the perfectness of the above pairing. Therefore $\eta\rmH^{1}_{f}(\QQ, \rmM_{n})\subset \rmH^{1}_{\{p\}}(\QQ,\rmM_{n})$ follows from this.
\end{proof}

\begin{lemma}\label{lm2}
Let $\rmF_{n}=\QQ(\rmM_{n})$ be the splitting field for the Galois module $\rmM_{n}$ and $\Delta_{n}=\Gal(\rmF_{n}/\QQ)$. Then we have
\begin{equation*}
\rmH^{1}_{\{p\}}(\QQ, \rmM_{n})\subset \rmH^{1}(\Delta_{n}, \rmM_{n})
\end{equation*}
where $\rmH^{1}(\Delta_{n}, \rmM_{n})$ is considered as a subgroup of $\rmH^{1}(\QQ, \rmM_{n})$ via inflation.
\end{lemma}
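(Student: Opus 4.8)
The plan is to run the inflation--restriction argument of Flach, following \cite[\S 2]{Flach} and \cite{Weston1}. Since $\Gal(\overline{\QQ}/\rmF_{n})$ acts trivially on $\rmM_{n}$ by the very definition $\rmF_{n}=\QQ(\rmM_{n})$, the five--term exact sequence attached to $1\to\Gal(\overline{\QQ}/\rmF_{n})\to\rmG_{\QQ}\to\Delta_{n}\to1$ identifies the subgroup $\rmH^{1}(\Delta_{n},\rmM_{n})\subset\rmH^{1}(\QQ,\rmM_{n})$ (via inflation) with the kernel of the restriction map
\begin{equation*}
\res\colon\rmH^{1}(\QQ,\rmM_{n})\longrightarrow\Hom_{\Delta_{n}}(\Gal(\overline{\QQ}/\rmF_{n}),\rmM_{n}),
\end{equation*}
the target being the continuous $\Delta_{n}$--equivariant homomorphisms. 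Hence it suffices to prove $\res(s)=0$ for every $s\in\rmH^{1}_{\{p\}}(\QQ,\rmM_{n})$.

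Fix such an $s$ and put $\psi=\res(s)$. Since $p$ is admissible it is prime to $\rmN l$, so $\rmM_{n}$ is unramified at $p$ and $p$ is unramified in $\rmF_{n}$; together with $\loc_{p}(s)=0$ this forces $\psi$ to vanish on a decomposition group at any place $w$ of $\rmF_{n}$ over $p$. Equivalently, the finite abelian extension $L/\rmF_{n}$ cut out by $\psi$ --- Galois over $\QQ$, with $\Gal(L/\rmF_{n})=\im(\psi)=:N\subset\rmM_{n}$ a $\Delta_{n}$--submodule --- splits completely above $p$. Now suppose $\psi\neq0$, so $N\neq0$. The large--image hypothesis on $\overline{\rho}_{\pi^{\circ}}$ makes $\mathrm{Ad}^{0}(\overline{\rho}_{\pi^{\circ}})$ irreducible and the unique minimal nonzero $\Delta_{n}$--submodule of $\rmM_{n}$, so $N$ contains it. As throughout the Euler system argument the classes in $\rmH^{1}_{\{p\}}(\QQ,\rmM_{n})$ are taken unramified outside a fixed finite set $S$ (the bad primes of $\rmM_{n}$ together with $\infty$, but not $p$), so $\rmH^{1}_{\{p\}}(\QQ,\rmM_{n})\subset\rmH^{1}(\rmG_{\QQ,S},\rmM_{n})$ is finite and all of the fields $L$ obtained this way lie in a single finite extension $L_{\max}/\rmF_{n}$, Galois over $\QQ$. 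The splitting condition at $p$ then says that the Frobenius conjugacy class of $p$ in $\Gal(L_{\max}/\QQ)$ meets one of finitely many ``bad'' cosets of $\Gal(L_{\max}/\rmF_{n})$. Since $p$ is $n$--admissible, $\Frob_{p}$ acts on $\rmM_{n}$ with eigenvalues $a/b,1,b/a$ where $a/b\not\equiv1\bmod\lambda$, which fixes its residue degree in $\rmF_{n}$; by Chebotarev applied to $\Gal(L_{\max}/\QQ)$ --- using the large image to ensure $\Gal(L_{\max}/\rmF_{n})$ is not the union of those finitely many proper submodules --- one may choose $p$ so that its Frobenius avoids every bad coset. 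This contradicts the splitting of $L$ above $p$ unless $N=0$; hence $\psi=0$, as desired.

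The crux is the last step: one must check that imposing $n$--admissibility on $p$ --- a constraint on the conjugacy class of $\Frob_{p}$ cut out inside the Galois group of $\QQ(\rmT_{\pi^{\circ},n},\mu_{l^{n}})/\QQ$ --- is compatible with, and does not force, the ``bad'' Chebotarev behaviour in $\Gal(L_{\max}/\QQ)$, so that a prime realizing both remains available. This independence of the two conditions is exactly what the large--image hypothesis on $\overline{\rho}_{\pi^{\circ}}$ supplies, and it is verified precisely as in \cite[\S 2]{Flach} and \cite{Weston1}; I would import their argument, checking only that $\rmM_{n}=\mathrm{Ad}^{0}(\calM_{\pi^{\circ}}[\lambda^{n}])$ (equivalently $\mathrm{Sym}^{2}(\rmT_{\pi^{\circ},n})$ up to twist) has the same formal properties as the symmetric--square modules treated there.
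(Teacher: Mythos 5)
Your opening moves match the paper's: inflation--restriction identifies $\rmH^{1}(\Delta_{n},\rmM_{n})$ with the kernel of restriction to $\Hom_{\Delta_{n}}(\rmG_{\rmF_{n}},\rmM_{n})$, and $\loc_{p}(s)=0$ does force $\psi=\res(s)$ to kill the decomposition groups of $\rmF_{n}$ above $p$. After that your route diverges and the decisive step does not hold up. A first, smaller, issue: $\rmH^{1}_{\{p\}}(\QQ,\rmM_{n})$ is defined in Lemma \ref{lm1} as the full kernel of $\rmH^{1}(\QQ,\rmM_{n})\rightarrow\rmH^{1}(\QQ_{p},\rmM_{n})$, with no ramification condition away from $p$; so there is no fixed finite set $S$, the group is not finite, and your $L_{\max}$ does not exist --- you have silently replaced the statement by one about classes unramified outside $S$. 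The fatal issue is the change of quantifier on $p$: the lemma concerns the $n$-admissible prime that has already been fixed (the one for which $\kappa^{[p]}$ and Lemma \ref{lm1} were proved), whereas your argument only produces \emph{some} prime whose Frobenius in $\Gal(L_{\max}/\QQ)$ avoids a ``bad'' locus. Moreover that avoidance is not proved and cannot hold in general: for \emph{every} prime $p$ the kernel of $\rmH^{1}(\rmG_{\QQ,S},\rmM_{n})\rightarrow\rmH^{1}(\QQ_{p},\rmM_{n})$ has order at least $\#\rmH^{1}(\rmG_{\QQ,S},\rmM_{n})/\#\rmH^{1}(\QQ_{p},\rmM_{n})$, so whenever the global group exceeds the local one (nothing in your hypotheses excludes this) every choice of $p$ admits a nonzero class vanishing at $p$, and since $\rmH^{1}(\Delta_{n},\rmM_{n})=0$ its restriction $\psi$ is nonzero. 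In other words the ``bad'' subsets can cover the whole admissible Frobenius class, and no single Chebotarev choice of $p$ is good for all candidate classes at once; the closing appeal to Flach and Weston does not repair this, since in their arguments the classes are controlled at many auxiliary primes, not at one.

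The root of the problem is that you keep only part of the hypothesis: from $\loc_{p}(s)=0$ you retain just the complete splitting of the field cut out by $\psi$ above $p$, i.e.\ the vanishing of $\psi$ on decomposition groups, which is much weaker than triviality of the local class. The paper's proof uses the stronger fact that the cocycle $\tilde{s}$ becomes a coboundary on the whole local Galois group, so its value on a Frobenius lift $\tau^{\prime}g$ lies in $(\tau^{\prime}g-1)\rmM_{n}=(\tau-1)\rmM_{n}$; letting $g$ run over $\Gamma=\Gal(\rmF^{\prime}_{n}/\rmF_{n})$ via Chebotarev and invoking the cocycle identity $\tilde{s}(\tau^{\prime}g)=\tilde{s}(\tau^{\prime})+\tau\tilde{s}(g)$, one gets $\psi(\Gamma)\subset(\tau-1)\rmM_{n}$, and then $\Delta_{n}$-equivariance, irreducibility, and $(\tau-1)\rmM_{n}\neq\rmM_{n}$ (the eigenvalue $1$ of $\tau$ supplied by $n$-admissibility, as in Lemma \ref{index}) force $\psi=0$. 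No re-choice of $p$ occurs, and the eigenvalue information enters only at this last step, not through a density argument about avoiding cosets. If you rework your write-up along these lines, the compatibility you flag at the end (realizing each $\tau^{\prime}g$ as the Frobenius of a place at which the class is known to be locally trivial, consistently with admissibility) is indeed the delicate point, and it is exactly where the coboundary computation, rather than a clever choice of a single $p$, has to carry the argument.
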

\begin{proof}
Let $s\in \rmH^{1}_{\{p\}}(\QQ, \rmM_{n})$ and consider the exact sequence 
\begin{equation}
0\rightarrow \rmH^{1}(\Delta_{n}, \rmM_{n})\rightarrow \rmH^{1}(\QQ, \rmM_{n})\rightarrow \rmH^{1}(\rmF_{n}, \rmM_{n})^{\Delta_{n}}=\Hom_{\Delta_{n}}(\rmG_{\rmF_{n}}, \rmM_{n})\rightarrow0.
\end{equation}
Let $\psi: \rmG_{\rmF_{n}}\rightarrow \rmM_{n}$ be the image of $s$ in $\Hom_{\Delta_{n}}(\rmG_{\rmF_{n}}, \rmM_{n})$. Then we need to show that $\psi=0$. Let $\tilde{s}$ be the cocycle representing $s$ and let $\rmF^{\prime}_{n}$ be the fixed field of the kernel of $\tilde{s}$. Let $\Gamma$ be $\Gal(\rmF^{\prime}_{n}/\rmF_{n})$, then it is clear that $\psi$ factors through $\psi: \Gamma\rightarrow \rmM_{n}$. Let $\tau$ be the Frobenius element at $p$ in $\Delta_{n}$ and fix a lift $\tau^{\prime}$ to $\Gal(\rmF^{\prime}_{n}/\QQ)$. Let $g$ be any element in $\Gamma$. By the Chebatorev density theorem we can find a place $v^{\prime}$ of $\rmF^{\prime}_{n}$ such that $\mathrm{Fr}_{\rmF^{\prime}_{n}/\rmF_{n}}(v^{\prime})=\tau^{\prime}g$. Let $v$ be the place under $v^{\prime}$ in $\rmF_{n}$ which necessarily lies over $p$. 

Since $s_{p}:=\mathrm{res}_{p}(s)$ is trivial, $\tilde{s}\vert_{\Gal(\rmF^{\prime}_{n, v^{\prime}}/\QQ_{p})}$ is a coboundary. Thus we have
\begin{equation*}
\tilde{s}(\tau^{\prime}g)\in (\tau^{\prime}g-1)\rmM_{n}=(\tau-1)\rmM_{n}. 
\end{equation*} 
Taking $g=1$ gives $\tilde{s}(\tau^{\prime})\in (\tau-1)\rmM_{n}$. On the other hand, the cocycle relation gives
\begin{equation*}
\tilde{s}(\tau^{\prime}g)=\tilde{s}(\tau^{\prime})+\tau\tilde{s}(g).
\end{equation*}
It follows then $\tau\tilde{s}(g)\in (\tau-1)\rmM_{n}$. Hence $\tilde{s}(g)\in (\tau-1)\rmM_{n}$ for any $g\in\Gamma$ as $(\tau-1)\tilde{s}(g)\in (\tau-1)\rmM_{n}$. Thus the image of $\psi$ lie in $(\tau-1)\rmM_{n}$. Note that $\psi$  is $\Delta_{n}$-equivariant and $\rmM_{n}$ is irreducible. Since $\rmM_{n}\neq (\tau-1)\rmM_{n}$ by the same reasoning as in Lemma \ref{index}, the image of $\psi$ is zero. 
\end{proof}

\begin{lemma}\label{lm3}
Under the same assumptions of Theorem \ref{main1}, $\eta$ annihilates the Selmer group $\rmH^{1}_{f}(\QQ, \mathrm{Ad}^{0}(\calM_{\pi^{\circ}}))$. Moreover $\rmH^{1}_{f}(\QQ, \mathrm{Ad}^{0}(\calM_{\pi^{\circ}}))$ can be identified with $\rmH^{1}_{f}(\QQ, \rmM_{\nu})$.
\end{lemma}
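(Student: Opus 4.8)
The plan is to reduce the assertion to the finite-level Selmer groups $\rmH^{1}_{f}(\QQ,\rmM_{n})$ and then feed the reduction into Lemmas \ref{index}, \ref{lm1} and \ref{lm2}. Since $\mathrm{Ad}^{0}(\calM_{\pi^{\circ}})$ is $\lambda$-divisible with $\mathrm{Ad}^{0}(\calM_{\pi^{\circ}})[\lambda^{n}]=\rmM_{n}$, and since by definition the Bloch--Kato local conditions for $\rmM_{n}$ are pulled back from those for $\mathrm{Ad}^{0}(\calM_{\pi^{\circ}})$ along $i_{n}$, the formalism of \S5.1 gives
\begin{equation*}
\rmH^{1}_{f}(\QQ,\mathrm{Ad}^{0}(\calM_{\pi^{\circ}}))=\varinjlim_{n}\rmH^{1}_{f}(\QQ,\rmM_{n}),
\end{equation*}
so it suffices to prove $\eta\,\rmH^{1}_{f}(\QQ,\rmM_{n})=0$ for every $n\geq 1$.

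Fix $n\geq 1$. The first step is to produce an $n$-admissible prime $p$ for $\rmT_{\pi^{\rmQ}}$; this is the one place where assumptions (1) and (2) of Theorem \ref{main1} are used. Applying the Chebotarev density theorem to the Galois group of the compositum of $\QQ(\rmM_{n})$ with $\QQ(\mu_{l^{n}})$ and using that the image of $\overline{\rho}_{\pi^{\circ}}$ contains $\GL_{2}(\FF_{l})$, one checks that the congruence conditions $a_{p^{2}}(\pi^{\rmQ})\not\in\{2p,-2p,-p^{2}-1,p^{2}+1\}\bmod\lambda^{n}$, together with $p$ inert in $\rmF$, cut out a non-empty union of Frobenius conjugacy classes, so such a $p$ exists. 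With $p$ fixed, Lemma \ref{lm1} gives $\eta\,\rmH^{1}_{f}(\QQ,\rmM_{n})\subseteq\rmH^{1}_{\{p\}}(\QQ,\rmM_{n})$, Lemma \ref{lm2} gives $\rmH^{1}_{\{p\}}(\QQ,\rmM_{n})\subseteq\rmH^{1}(\Delta_{n},\rmM_{n})$ inside $\rmH^{1}(\QQ,\rmM_{n})$ via inflation, and hypothesis (3) says $\rmH^{1}(\Delta_{n},\rmM_{n})=0$. Hence $\eta\,\rmH^{1}_{f}(\QQ,\rmM_{n})=0$, and passing to the limit over $n$ shows that $\eta$ annihilates $\rmH^{1}_{f}(\QQ,\mathrm{Ad}^{0}(\calM_{\pi^{\circ}}))$.

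For the identification with $\rmH^{1}_{f}(\QQ,\rmM_{\nu})$, I would use the short exact sequence of Galois modules
\begin{equation*}
0\longrightarrow\rmM_{\nu}\longrightarrow\mathrm{Ad}^{0}(\calM_{\pi^{\circ}})\xrightarrow{\ \eta\ }\mathrm{Ad}^{0}(\calM_{\pi^{\circ}})\longrightarrow 0,
\end{equation*}
where multiplication by $\eta=\varpi^{\nu}$ is surjective with kernel $\rmM_{\nu}$ because the module is $\lambda$-divisible. Since $\overline{\rho}_{\pi^{\circ}}$ is absolutely irreducible and $l$ is odd, $\rmH^{0}(\QQ,\rmM_{n})=0$ for all $n$, hence $\rmH^{0}(\QQ,\mathrm{Ad}^{0}(\calM_{\pi^{\circ}}))=0$; the associated long exact cohomology sequence then shows that $i_{\nu}$ induces an injection of $\rmH^{1}(\QQ,\rmM_{\nu})$ onto the $\eta$-torsion submodule of $\rmH^{1}(\QQ,\mathrm{Ad}^{0}(\calM_{\pi^{\circ}}))$. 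By the previous paragraph $\rmH^{1}_{f}(\QQ,\mathrm{Ad}^{0}(\calM_{\pi^{\circ}}))$ is contained in that $\eta$-torsion submodule, so it is the image of a unique subgroup of $\rmH^{1}(\QQ,\rmM_{\nu})$; comparing local conditions place by place (again using that the Bloch--Kato conditions for $\rmM_{\nu}$ are the $i_{\nu}$-pull-backs of those for $\mathrm{Ad}^{0}(\calM_{\pi^{\circ}})$) identifies this subgroup with $\rmH^{1}_{f}(\QQ,\rmM_{\nu})$, yielding the desired isomorphism $\rmH^{1}_{f}(\QQ,\rmM_{\nu})\xrightarrow{\sim}\rmH^{1}_{f}(\QQ,\mathrm{Ad}^{0}(\calM_{\pi^{\circ}}))$.

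The main obstacle is the step hidden in the second paragraph: producing, for every $n$, an $n$-admissible prime $p$ for $\rmT_{\pi^{\rmQ}}$. All of the geometric content has already been packaged into Lemmas \ref{index}, \ref{lm1} and \ref{lm2} (and, behind them, the reciprocity formula of Proposition \ref{reci} and the Tate conjecture of Proposition \ref{Tate}), so the only genuinely new input needed here is the verification that the excluded congruence classes do not exhaust the admissible Frobenius values modulo $\lambda^{n}$ — this is precisely where the largeness of the image of $\overline{\rho}_{\pi^{\circ}}$ (hypothesis (2)) is essential — while the remaining direct-limit and long-exact-sequence manipulations are formal.
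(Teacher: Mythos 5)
Your argument is correct and follows essentially the same route as the paper: the annihilation part is exactly the paper's combination of Lemmas \ref{lm1} and \ref{lm2} with hypothesis (3) and the direct limit $\rmH^{1}_{f}(\QQ,\mathrm{Ad}^{0}(\calM_{\pi^{\circ}}))=\varinjlim\rmH^{1}_{f}(\QQ,\rmM_{n})$, and your multiplication-by-$\eta$ sequence together with the vanishing of $\rmH^{0}$ is the same device the paper uses (via $0\rightarrow\rmM_{n}\rightarrow\rmM\xrightarrow{\varpi^{n}}\varpi^{n}\rmM\rightarrow 0$ and the Selmer-group exact sequence of \S 5.1) to get $\rmH^{1}_{f}(\QQ,\rmM_{\nu})\cong\rmH^{1}_{f}(\QQ,\mathrm{Ad}^{0}(\calM_{\pi^{\circ}}))$. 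The only extra content is your Chebotarev sketch for the existence of $n$-admissible primes, which the paper simply takes for granted at this point.
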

\begin{proof}
For each $n$, we pick an $n$-admissible prime $p$ for $\rmT_{\pi^{\rmQ}}$. By the previous Lemma \ref{lm1} and Lemma \ref{lm2}, we have
\begin{equation*}
\eta\rmH^{1}_{f}(\QQ, \rmM_{n}) \subset \rmH^{1}(\Delta_{n}, \rmM_{n}).
\end{equation*}
By the second assumption in the statement of the Theorem, we know $\rmH^{1}(\Delta_{n}, \rmM_{n})$ is trivial. Thus $\rmH^{1}_{f}(\QQ, \rmM_{n})$ is indeed annihilated by $\eta$ for each $n$. Hence $\rmH^{1}_{f}(\QQ, \Ad^{0}(\calM_{\pi^{\circ}}))$ is also annihilated by $\eta$ as desired. 

For the in particular part, we abbreviate $\mathrm{Ad}^{0}(\calM_{\pi^{\circ}})$ by $\rmM$. Consider the exact sequence 
\begin{equation*}
0\rightarrow \rmM_{n}\rightarrow \rmM\xrightarrow{\varpi^{n}}\varpi^{n}\rmM\rightarrow 0. 
\end{equation*}
It induces a long exact sequence
\begin{equation*}
0\rightarrow\rmH^{0}(\QQ, \rmM_{n})\rightarrow \rmH^{0}(\QQ, \rmM)\rightarrow \rmH^{0}(\QQ, \varpi^{n}\rmM)\rightarrow \rmH^{1}_{f}(\QQ, \rmM_{n})\rightarrow  \rmH^{1}_{f}(\QQ, \rmM)\rightarrow\rmH^{1}_{f}(\QQ, \varpi^{n}\rmM).
\end{equation*}
The assumptions in the statement of the lemma implies that there is an isomorphism
\begin{equation*}
\rmH^{1}_{f}(\QQ, \rmM_{n})\cong  \rmH^{1}_{f}(\QQ, \rmM)[\varpi^{n}].
\end{equation*}
It follows then we have $\rmH^{1}_{f}(\QQ, \rmM_{\nu})\cong  \rmH^{1}_{f}(\QQ, \rmM)[\varpi^{\nu}]\cong \rmH^{1}_{f}(\QQ, \Ad^{0}(\calM_{\pi^{\circ}}))$. 
\end{proof}

Next we will use the Bockstein pairing to upgrade the annihilation result above to a result concerning the length of the Selmer group. We define the first  $p$-Tate--Shafarevich group of $\rmM_{n}$ by
\begin{equation*}
\Sha^{1}_{\{p\}}(\QQ, \rmM_{n})=\ker\{\rmH^{1}_{f}(\QQ, \rmM_{n})\rightarrow \rmH^{1}(\QQ_{p}, \rmM_{n})\}.
\end{equation*}
It is clear that $\Sha^{1}_{\{p\}}(\QQ, \rmM_{n})\subset \rmH^{1}_{\{p\}}(\QQ, \rmM_{n})$. 

Suppose that $\Sha^{1}_{\{p\}}(\QQ, \rmM_{n})$ is trivial, then one can define the \emph{Bockstein pairing}
\begin{equation*}
\{\cdot, \cdot\}_{n}: \rmH^{1}_{f}(\QQ, \rmN_{n})\otimes \rmH^{1}_{f}(\QQ, \rmM_{n})\rightarrow \QQ_{l}/\ZZ_{l}
\end{equation*}
associated to the exact sequence
\begin{equation*}
0\rightarrow \rmN_{n}\xrightarrow{\varpi^{n}} \rmN_{2n}\rightarrow  \rmN_{n}\rightarrow 0
\end{equation*}
where is the first map is the multiplication by $\varpi^{n}$ and the second map is the natural projection map. We will not recall the definition of this pairing but refer the reader to \cite{Weston1} for some discussions.

Let $p$ be a $\nu$-admissible prime for $\rmT_{\pi^{\rmQ}}$ now.  Consider the submodule $\langle\kappa^{p}_{\nu}\rangle$ of $\rmH^{1}(\QQ, \rmN_{\nu})$ generated by $\kappa^{p}_{\nu}$, we observe that $\langle\kappa^{p}_{\nu}\rangle$ lies in $ \rmH^{1}_{f}(\QQ, \rmN_{\nu})$ by Lemma \ref{index}.

\begin{lemma}\label{lm4}
Under the assumptions of Theorem \ref{main1}, the restriction of $\{\cdot,\cdot\}_{\nu}$ to $\langle\kappa^{p}_{\nu}\rangle$ is right non-degenerate and it induces an inclusion
\begin{equation*}
\rmH^{1}_{f}(\QQ, \mathrm{Ad}^{0}(\calM_{\pi^{\circ}}))\hookrightarrow \Hom(\langle\kappa^{p}_{\nu}\rangle, \QQ_{l}/\ZZ_{l}).
\end{equation*} 
\end{lemma}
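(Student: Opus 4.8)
The plan is to reduce the statement to a single local computation at $p$, for which the essential inputs are the reciprocity formula of Proposition \ref{reci} and the injectivity of $\loc_p$ on the Selmer group extracted from Lemma \ref{lm2}. First I would record the formal points. By Lemma \ref{lm3} the group $\rmH^{1}_{f}(\QQ,\Ad^{0}(\calM_{\pi^{\circ}}))$ is identified with $\rmH^{1}_{f}(\QQ,\rmM_{\nu})$, and $\langle\kappa^{p}_{\nu}\rangle\subset\rmH^{1}_{f}(\QQ,\rmN_{\nu})$, so $\{\cdot,\cdot\}_{\nu}$ does restrict to $\langle\kappa^{p}_{\nu}\rangle\otimes\rmH^{1}_{f}(\QQ,\rmM_{\nu})$; its construction is legitimate because $\Sha^{1}_{\{p\}}(\QQ,\rmM_{\nu})\subset\rmH^{1}_{\{p\}}(\QQ,\rmM_{\nu})\subset\rmH^{1}(\Delta_{\nu},\rmM_{\nu})=0$, the last equality being hypothesis (3) of Theorem \ref{main1} combined with Lemma \ref{lm2}. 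Since $\langle\kappa^{p}_{\nu}\rangle$ is a cyclic $\calO_{\lambda}$-module generated by $\kappa^{p}_{\nu}$, right non-degeneracy is equivalent to the injectivity of $s\mapsto\{\kappa^{p}_{\nu},s\}_{\nu}$ on $\rmH^{1}_{f}(\QQ,\rmM_{\nu})$, and the claimed inclusion $\rmH^{1}_{f}(\QQ,\Ad^{0}(\calM_{\pi^{\circ}}))\hookrightarrow\Hom(\langle\kappa^{p}_{\nu}\rangle,\QQ_{l}/\ZZ_{l})$, $s\mapsto(x\mapsto\{x,s\}_{\nu})$, then follows formally. It is convenient (and harmless for the ultimate length bound) to take $p$ to be $2\nu$-admissible, so that Lemma \ref{index} may be invoked at level $2\nu$ as well; any such $p$ is in particular $\nu$-admissible.

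Next I would compute $\{\kappa^{p}_{\nu},s\}_{\nu}$ locally. Because $\Theta^{[p]}$ is defined integrally, $\kappa^{p}_{\nu}$ is the reduction of the class $\kappa^{p}_{2\nu}\in\rmH^{1}(\QQ,\rmN_{2\nu})$, which thus provides a global lift of $\kappa^{p}_{\nu}$ through $\rmN_{2\nu}\twoheadrightarrow\rmN_{\nu}$. By the local analysis already used in the proof of Lemma \ref{lm1} — i.e. the arguments of \cite[Lemmas 2.8, 2.10]{Flach}, carried out with $\calO_{\lambda}/\lambda^{2\nu}$–coefficients on the quaternionic surface $\rmX(\rmQ)$ — the localizations $\loc_{r}(\kappa^{p}_{2\nu})$ are finite for every $r\neq p$ and crystalline at $l$. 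Feeding this into the definition of the Bockstein pairing (see \cite{Weston1}), the contributions away from $p$ cancel against the finite-everywhere class $s$ by self-orthogonality of the local Selmer conditions under local Tate duality, and one is left with
\begin{equation*}
\{\kappa^{p}_{\nu},s\}_{\nu}=\langle\delta,\loc_{p}(s)\rangle_{p},
\end{equation*}
where $\langle\cdot,\cdot\rangle_{p}$ is the perfect local Tate pairing $\rmH^{1}_{\mathrm{sin}}(\QQ_{p},\rmN_{\nu})\times\rmH^{1}_{\mathrm{fin}}(\QQ_{p},\rmM_{\nu})\to\QQ_{l}/\ZZ_{l}$ and $\delta\in\rmH^{1}_{\mathrm{sin}}(\QQ_{p},\rmN_{\nu})$ is the unique class with $(\varpi^{\nu})_{\ast}\delta=\partial_{p}(\kappa^{p}_{2\nu})$ — such a $\delta$ exists because $\partial_{p}(\kappa^{p}_{2\nu})$ dies in $\rmH^{1}_{\mathrm{sin}}(\QQ_{p},\rmN_{\nu})$, being the image of $\partial_{p}(\kappa^{p}_{\nu})=0$.

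Finally I would invoke reciprocity. By Proposition \ref{reci} and its proof, $\partial_{p}(\kappa^{[p]})$ is the characteristic function $\mathbf{1}_{\rmZ(\overline{\rmB})}$, whose pairing against the normalized base-change form $\phi^{\dagger}$ — a generator of the rank-one $\pi^{\overline{\rmQ}}$-isotypic line inside $\calO_{\lambda}[\rmZ(\overline{\rmQ})]_{\frakm_{\rmF}}$ — equals $\calP_{\mathrm{dis}}(\phi^{\dagger})$, of valuation exactly $\nu$. Hence $\partial_{p}(\kappa^{p}_{2\nu})\in\rmH^{1}_{\mathrm{sin}}(\QQ_{p},\rmN_{2\nu})\cong\calO_{\lambda}/\lambda^{2\nu}$ has valuation exactly $\nu$, so $\delta$ is a unit, i.e. a generator of $\rmH^{1}_{\mathrm{sin}}(\QQ_{p},\rmN_{\nu})\cong\calO_{\lambda}/\lambda^{\nu}$. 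Perfectness of $\langle\cdot,\cdot\rangle_{p}$ then yields $\{\kappa^{p}_{\nu},s\}_{\nu}=0\iff\loc_{p}(s)=0$, while $\loc_{p}$ is injective on $\rmH^{1}_{f}(\QQ,\rmM_{\nu})$ since $\Sha^{1}_{\{p\}}(\QQ,\rmM_{\nu})=0$. This establishes right non-degeneracy and, with the formal remarks of the first paragraph, the desired inclusion.

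The main obstacle is the identification $\{\kappa^{p}_{\nu},s\}_{\nu}=\langle\delta,\loc_{p}(s)\rangle_{p}$ of the second paragraph. It demands two things: (a) that the global level-$2\nu$ lift $\kappa^{p}_{2\nu}$ remain finite at the primes $r\mid\rmN$, which is exactly where the minimality at $\Sigma^{+}$ and the ramification hypotheses at $\Sigma^{-}_{\mathrm{ram}}$ for $\overline{\rho}_{\pi^{\circ}}$ re-enter, now modulo $\lambda^{2\nu}$ rather than modulo $\lambda$; and (b) an honest unwinding of the Bockstein pairing — checking the cancellation of the off-$p$ terms and the compatibility of the cup products with local duality — together with the bookkeeping that $\phi^{\dagger}$ being normalized genuinely forces $\delta$ to be a unit, equivalently that projecting $\mathbf{1}_{\rmZ(\overline{\rmB})}$ onto the $\pi^{\overline{\rmQ}}$-isotypic line costs exactly $\nu=\ord_{\lambda}(\calP_{\mathrm{dis}}(\phi^{\dagger}))$ in $\lambda$-divisibility, which rests on multiplicity one and on the perfectness of the pairing $(\cdot,\cdot)$ on that line.
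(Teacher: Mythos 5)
Your proposal is correct and takes essentially the same route as the paper's proof: both compute $\{\kappa^{p}_{\nu},\cdot\}_{\nu}$ as the local Tate pairing at $p$ of $\tfrac{1}{\eta}\partial_{p}(\kappa^{p}_{2\nu})$ (your $\delta$) against $\loc_{p}(y)$, use Lemma \ref{index} together with the reciprocity formula of Proposition \ref{reci} to see that this class generates $\rmH^{1}_{\sin}(\QQ_{p},\rmN_{\nu})$, and then conclude $\loc_{p}(y)=0$ by perfectness of local duality and $y=0$ via Lemma \ref{lm2} and Lemma \ref{lm3}. The additional points you make -- deriving the local formula for the Bockstein pairing from the finiteness of $\kappa^{p}_{2\nu}$ away from $p$, and taking $p$ to be $2\nu$-admissible so that Lemma \ref{index} applies at level $2\nu$ -- are elaborations of steps the paper states more tersely, not a different argument.
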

\begin{proof}
Let $y\in \rmH^{1}_{f}(\QQ,\rmM_{\nu})$. First note that $\Sha^{1}_{\{p\}}(\QQ, \rmM_{\nu})$ is trivial by Lemma \ref{lm2}. Therefore it makes sense to compute the Bockstein pairing $\{\kappa^{p}_{\nu}, y\}_{\nu}$. By definition, this is done by choosing a lift $\kappa^{p}_{2\nu}\in  \rmH^{1}(\QQ, \rmN_{2\nu}) $ of  $\kappa^{p}_{\nu}$ and defining 
\begin{equation*}
\{\kappa^{p}_{\nu}, y\}_{\nu}=\langle\frac{1}{\eta}\partial_{p}(\kappa^{p}_{2\nu}), \mathrm{loc}_{p}(y) \rangle_{\nu}
\end{equation*}
using the Tate local duality at $p$. 

Suppose now $y$ is chosen that $\{\kappa^{p}_{\nu}, y\}_{\nu}=0$. By Lemma \ref{index}, the class $\frac{1}{\eta}\partial_{p}(\kappa^{p}_{2\nu})$ generates $ \rmH^{1}_{\sin}(\QQ_{p}, \rmN_{\nu})$. By the perfectness of the Tate local duality, we have $\loc_{p}(y)=0$ and it follows that $y\in \rmH^{1}_{\{p\}}(\QQ, \rmM_{\nu})$ and thus $y=0$ by
Lemma \ref{lm2}. This shows that we have an inclusion 
\begin{equation*}
\rmH^{1}_{f}(\QQ, \rmM_{\nu})\hookrightarrow \Hom(\langle\kappa^{p}_{\nu}\rangle, \QQ_{l}/\ZZ_{l}).
\end{equation*}
Thus we have $\rmH^{1}_{f}(\QQ, \mathrm{Ad}^{0}(\calM_{\pi^{\circ}}))\hookrightarrow \Hom(\langle\kappa^{p}_{\nu}\rangle, \QQ_{l}/\ZZ_{l})$ by Lemma \ref{lm3}.
\end{proof}

\begin{myproof}{Theorem}{\ref{main1}}
Now Theorem \ref{main1} follows from combining Lemma \ref{lm3} and Lemma \ref{lm4} which gives the desired length estimate
\begin{equation*}
\mathrm{length}_{\calO_{\lambda}}\phantom{.}\rmH^{1}_{f}(\QQ, \mathrm{Ad}^{0}(\calM_{\pi^{\circ}}))\leq \nu.
\end{equation*}
\end{myproof}

\subsection{Comparison of quaternionic periods} 
Recall the set up at the beginning of $\S6.2$, $f\in\rmS_{2}(\Gamma_{0}(\rmN))$ is a modular form which is new at primes dividing $\rmN^{-}$ and admits a normalized Jacquet--Langlands transfer  $f^{\dagger}$ to an automorphic form on $\rmZ(\overline{\rmB})$. The base change of $f^{\dagger}$ to $\rmF$ can be regarded as an automorphic form on $\rmZ(\overline{\rmQ})$ which we denote by $\phi^{\dagger}$. 

The purpose of this final section is to compare the distinguished period
\begin{equation*}
\calP_{\mathrm{dis}}(\phi^{\dagger})=\sum\limits_{z\in \rmZ(\overline{\rmB})}\phi^{\dagger}(z) 
\end{equation*}
and the \emph{quaternionic period}
\begin{equation*}
\calP(f^{\dagger})=\sum\limits_{z\in \rmZ(\overline{\rmB})}f^{\dagger2}(z) =\langle f^{\dagger}, f^{\dagger}\rangle
\end{equation*}
which is known as the Petersson norm of $f^{\dagger}$. We would like to compare $\ord_{\lambda}(\calP_{\mathrm{dis}}(\phi^{\dagger}))$ and $\ord_{\lambda}(\calP(f^{\dagger}))$ under suitable assumptions on the Galois representation $\rho_{\pi^{\circ}}: \rmG_{\QQ}\rightarrow \GL_{2}(\rmE_{\lambda})$ associated to $\pi^{\circ}$.  More precisely, we will show that the $\lambda$-adic valuation of the ratio
\begin{equation*}
\calP_{\mathrm{dis}}(\phi^{\dagger})/\calP(f^{\dagger})
\end{equation*}
is non-negative. Results of this flavour have  been by Prasana in \cite[Theorem 5.2]{pras} and by Urban--Tilouine in \cite[Proposition 4.22]{UT} with quite different method.

Let $\Sigma^{+}$ be the set of primes dividing $\rmN^{+}$ and let $\Sigma^{-}_{\mathrm{ram}}$ be the set of primes $r$ dividing $\rmN^{-}$ such that $l\mid r^{2}-1$.

\begin{assumption}\label{ass1}
We make the following assumptions on $\bar{\rho}_{\pi^{\circ}}$:
\begin{enumerate}
\item  $\bar{\rho}_{\pi^{\circ}}\vert_{\rmG_{\QQ(\zeta_{l})}}$ is absolutely irreducible;
\item The image of $\bar{\rho}_{\pi^{\circ}}$ contains $\GL_{2}(\FF_{l})$;
\item $\overline{\rho}_{\pi^{\circ}}$ is \emph{minimal} at primes in $\Sigma^{+}$ in the sense that all the liftings of $\overline{\rho}_{\pi^{\circ}}\vert_{\rmG_{\QQ_{r}}}$ are minimally ramified for $r\in\Sigma^{+}$;
\item $\overline{\rho}_{\pi^{\circ}}$ is ramified at primes in $\Sigma^{-}_{\mathrm{ram}}$.
\end{enumerate}
\end{assumption}

Now we can state the main result on comparing the periods $\calP_{\mathrm{dis}}(\phi^{\dagger})$ and $\calP(f^{\dagger})$.

\begin{theorem}\label{main2}
Let $f\in \rmS_{2}(\Gamma_{0}(\rmN))$ be a newform of weight $2$ with $\rmN=\rmN^{+}\rmN^{-}$ such that $\rmN^{-}$ is squarefree and has odd number of prime factors. Let $f^{\dagger}$ be the normalized automorphic form on $\rmZ(\overline{\rmB})$ corresponding to $f$ under the Jacquet--Langlands correspondence and $\phi^{\dagger}$ be the base change of $f^{\dagger}$ considered as an automorphic form on  $\rmZ(\overline{\rmQ})$ which is also normalized. 
\begin{enumerate}
\item We assume that the residual Galois representation $\overline{\rho}_{\pi^{\circ}}$ satisfies Assumption \ref{ass1}; 
\item We further assume that $\rmH^{1}(\Delta_{n}, \rmM_{n})=0$ 
for every $n\geq 1$ where $\Delta_{n}=\Gal(\QQ(\rmM_{n})/\QQ)$ for the splitting field $\QQ(\rmM_{n})$ of the Galois module $\rmM_{n}$. 
\end{enumerate}
Then we have the following inequality
\begin{equation*}
\ord_{\lambda}(\calP_{\mathrm{dis}}(\phi^{\dagger}))\geq \ord_{\lambda}(\calP(f^{\dagger})).
\end{equation*}
\end{theorem}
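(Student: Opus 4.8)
The plan is to pin down the $\calO_\lambda$-length of the adjoint Bloch--Kato Selmer group from both sides. The hypotheses of the present theorem imply those of Theorem \ref{main1} (Assumption \ref{ass1}(1)--(2) gives absolute irreducibility and large image, and the cohomology vanishing hypothesis is shared), so that theorem already supplies the upper bound
\[
\mathrm{length}_{\calO_\lambda}\, \rmH^1_f(\QQ, \mathrm{Ad}^0(\calM_{\pi^\circ})) \le \nu = \ord_\lambda(\calP_{\mathrm{dis}}(\phi^\dagger)).
\]
It therefore remains to establish the complementary lower bound $\ord_\lambda(\calP(f^\dagger)) \le \mathrm{length}_{\calO_\lambda}\, \rmH^1_f(\QQ, \mathrm{Ad}^0(\calM_{\pi^\circ}))$, and this is where the Taylor--Wiles method enters; the remaining parts of Assumption \ref{ass1} are tailored precisely for it.

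First I would introduce the deformation problem $\calS$ for $\overline\rho_{\pi^\circ}$ whose liftings are minimally ramified at the primes of $\Sigma^+$, special (Steinberg) at the primes dividing $\rmN^-$, and Fontaine--Laffaille at $l$ (the weight-$2$, prime-to-$l$ level situation lies in the Fontaine--Laffaille range). The large-image hypothesis Assumption \ref{ass1}(2), minimality at $\Sigma^+$ in Assumption \ref{ass1}(3), and ramification at $\Sigma^-_{\mathrm{ram}}$ in Assumption \ref{ass1}(4) are exactly the inputs that permit the construction of a Taylor--Wiles system and the patching argument, yielding an isomorphism $R_\calS \xrightarrow{\sim} \TT_\frakm$ onto a complete-intersection Hecke algebra acting faithfully on the space of $\rmN^-$-new definite quaternionic forms in $\calO_\lambda[\rmZ(\overline\rmB)]_\frakm$. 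The Wiles--Lenstra numerical criterion then identifies the cotangent space of $R_\calS$ at the augmentation attached to $f^\dagger$ -- equivalently the Selmer group $\rmH^1_\calS(\QQ, \mathrm{Ad}^0(\calM_{\pi^\circ}))$ cut out by the local conditions of $\calS$ -- with the congruence module of $f^\dagger$, so that $\mathrm{length}_{\calO_\lambda}\, \rmH^1_\calS(\QQ, \mathrm{Ad}^0(\calM_{\pi^\circ})) = \ord_\lambda \eta(\rmN^+,\rmN^-)$ for the congruence number $\eta(\rmN^+,\rmN^-)$ of $f^\dagger$. A well-known computation, whose analogues appear in \cite{pras}, \cite{UT} and in the companion article \cite{Wang}, then gives $\ord_\lambda \eta(\rmN^+,\rmN^-) = \ord_\lambda \langle f^\dagger, f^\dagger\rangle = \ord_\lambda \calP(f^\dagger)$; normalizing $f^\dagger$ makes this comparison free of spurious factors.

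The remaining point is the inclusion $\rmH^1_\calS(\QQ, \mathrm{Ad}^0(\calM_{\pi^\circ})) \subseteq \rmH^1_f(\QQ, \mathrm{Ad}^0(\calM_{\pi^\circ}))$: at primes away from $\rmN l$ both conditions are the unramified (hence finite) one; at $l$ the Fontaine--Laffaille condition coincides with the Bloch--Kato finite condition in our range; at the primes of $\Sigma^+$ minimality forces $\calS$-classes into the Bloch--Kato condition; and at the primes dividing $\rmN^-$ the special condition also lies inside $\rmH^1_f$. Assembling the chain
\[
\ord_\lambda(\calP(f^\dagger)) = \mathrm{length}_{\calO_\lambda}\, \rmH^1_\calS \le \mathrm{length}_{\calO_\lambda}\, \rmH^1_f \le \nu = \ord_\lambda(\calP_{\mathrm{dis}}(\phi^\dagger))
\]
gives the asserted inequality. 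I expect the main obstacle to be the bookkeeping at the ramified places: matching the local deformation condition at the primes of $\Sigma^-_{\mathrm{mix}}$ (where $l \nmid r^2-1$, so the local framed deformation ring is not formally smooth) both with the Bloch--Kato condition and with the precise normalization of the congruence number, and confirming that the Taylor--Wiles system survives these mixed local conditions -- which is precisely why Assumption \ref{ass1}(3)--(4) isolate $\Sigma^+$ and $\Sigma^-_{\mathrm{ram}}$ for separate treatment.
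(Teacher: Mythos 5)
Your proposal is correct and follows essentially the same route as the paper: the upper bound is Theorem \ref{main1}, and the lower bound comes from an $\rmR=\rmT$ theorem (the paper quotes \cite{Lun}, \cite{KO}, with local conditions minimal at $\Sigma^{+}\cup\Sigma^{-}_{\mathrm{ram}}$ and ``new'' at $\Sigma^{-}_{\mathrm{mix}}$) identifying $\mathrm{length}_{\calO_{\lambda}}\,\rmH^{1}_{\calS}(\QQ,\Ad^{0}(\calM_{\pi^{\circ}}))$ with $\ord_{\lambda}$ of the congruence number, which equals $\ord_{\lambda}(\calP(f^{\dagger}))$ by freeness of $\calO_{\lambda}[\rmZ(\overline{\rmB})]_{\frakm}$ over $\TT_{\frakm}$ and \cite[4.17]{DDT}. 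The difficulty you flag at $\Sigma^{-}_{\mathrm{mix}}$ is handled in the paper simply by citing \cite[Proposition 4.4]{KO}, which makes the ``new'' local condition trivial there and yields the inclusion $\rmH^{1}_{\calS}\subseteq\rmH^{1}_{f}$ immediately.
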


To prove this theorem, we will study the Bloch--Kato Selmer group $\rmH^{1}_{f}(\QQ, \mathrm{Ad}^{0}(\calM_{\pi^{\circ}}))$ from the perspective of deformation theory of the residual representation $\overline{\rho}_{\pi^{\circ}}$. Although the Bloch--Kato Selmer group itself has less  connection with the deformation theory of the residual representation $\overline{\rho}_{\pi^{\circ}}$, we can introduce a smaller Selmer group as follows. We define the local condition $\rmH^{1}_{\mathrm{new}}(\QQ_{v}, \Ad^{0}(\calM_{\pi^{\circ}}))$ and $\rmH^{1}_{\mathrm{new}}(\QQ_{v}, \rmM_{n})$ as in \cite[Definition 3.6]{Lun}, see also \cite[(3.3), (3.4)]{KO}. Let $\Sigma^{-}_{\mathrm{mix}}$ be the set of prime divisors of $\rmN^{-}$such that $l\nmid r^{2}-1$. Then we define the Selmer group $\rmH^{1}_{\calS}(\QQ, \Ad^{0}(\calM_{\pi^{\circ}}))$  by
\begin{equation*}
\mathrm{ker}\{\rmH^{1}(\QQ,\Ad^{0}(\calM_{\pi^{\circ}}))\rightarrow \prod_{v\not\in \Sigma^{-}_{\mathrm{mix}}}\frac{\rmH^{1}(\QQ_{v}, \Ad^{0}(\calM_{\pi^{\circ}}))}{\rmH^{1}_{f}(\QQ_{v},\Ad^{0}(\calM_{\pi^{\circ}}))}\times \prod_{v\in \Sigma^{-}_{\mathrm{mix}}}\frac{\rmH^{1}(\QQ_{v}, \Ad^{0}(\calM_{\pi^{\circ}}))}{\rmH^{1}_{\new}(\QQ_{v},\Ad^{0}(\calM_{\pi^{\circ}}))}\}.
\end{equation*}
Let $\TT=\TT^{\rmN}$ be the prime-to-$\rmN$ Hecke algebra for the group $\rmG(\overline{\rmB})$. Let 
\begin{equation*}
\phi_{\pi^{\circ}}: \TT\rightarrow k_{\lambda}
\end{equation*}
be the morphism provided by the Hecke eigensystem corresponding to the trace of Frobenius of $\overline{\rho}_{\pi^{\circ}}$. Then we obtain a maximal ideal $\frakm=\ker(\phi_{\pi^{\circ}})$. On the other hand, consider the global deformation problem given by
\begin{equation*}
\calS_{\mix}:=(\overline{\rho}_{\pi^{\circ}}, \chi_{l}, \Sigma^{+}\cup \Sigma^{-}_{\ram}\cup\Sigma^{-}_{\mix}\cup\{l\}, \{\calD_{v}\}_{v\in \Sigma^{+}\cup \Sigma^{-}_{\ram}\cup\Sigma^{-}_{\mix}\cup\{l\}})
\end{equation*}
 that classifies the deformations of $\overline{\rho}_{\pi^{\circ}}$ over an $\calO$-algebra which satisfy the following local deformation conditions:
\begin{enumerate}
\item for $v=l$, $\calD_{l}$ classifies deformations which are Fontaine--Laffaille crystalline;
\item for $v\in \Sigma^{-}_{\ram}\cup \Sigma^{+}$, $\calD_{v}$ is the local deformation problem that classifies deformations that are minimally ramified;
\item for $v\in \Sigma^{-}_{\mix}$,  $\calD_{v}=\calD^{\new}_{v}$ is the local deformation problem that classifies deformations that are new in the sense of \cite[Definition 3.6]{Lun}.
\end{enumerate}

This global deformation problem is represented by the deformation ring $\rmR_{\mix}$. Moreover it follows from \cite[Proposition 4.1]{Lun}, see also \cite[Theorem 3.14]{KO} that there is an isomorphism
\begin{equation}
\rmR_{\mix}\cong \TT_{\frakm}.
\end{equation}
We remark that although we use \cite{Lun}  and \cite{KO} as references. This type of $\rmR=\rmT$ theorem is first studied in \cite{Khare} reversing the steps taken in the original Taylor--Wiles method. Consider the congruence number $\eta_{f}(\rmN^{+},\rmN^{-})$ defined in \cite[\S2.2]{PW} that detects congruences between $f$ and modular forms in $\rmS_{2}(\Gamma_{0}(\rmN))$ which are new at primes dividing $\rmN^{-}$, the $\lambda$-valuation $\ord_{\lambda}(\eta_{f}(\rmN^{+},\rmN^{-}))$ agrees with the length of $\rmH^{1}_{\calS}(\QQ, \Ad^{0}(\calM_{\pi^{\circ}}))$ as a consequence of the above $\rmR=\rmT$ theorem. It also follows that $\calZ_{\pi^{\circ}}(\overline{\rmB})=\calO_{\lambda}[\rmZ_{\rmN^{+}}(\overline{\rmB})]_{\frakm}
$ is free over $\TT_{\frakm}$ of rank $1$. Hence \cite[4.17]{DDT} implies that $\eta_{f}(\rmN^{+},\rmN^{-})$ can be chosen to be the Petersson norm $\calP(f^{\dagger})$ of $f^{\dagger}$. Thus we have
\begin{equation*}
\mathrm{length}_{\calO_{\lambda}}\phantom{.}\rmH^{1}_{\calS}(\QQ, \Ad^{0}(\calM_{\pi^{\circ}}))=\ord_{\lambda}(\eta_{f}(\rmN^{+},\rmN^{-}))=\ord_{\lambda}(\calP(f^{\dagger})).
\end{equation*}
On the other hand, we have an inclusion
\begin{equation*}
\rmH^{1}_{\calS}(\QQ, \Ad^{0}(\calM_{\pi^{\circ}}))\hookrightarrow \rmH^{1}_{f}(\QQ, \Ad^{0}(\calM_{\pi^{\circ}}))
\end{equation*}
by the definitions of the local conditions defining the Selmer groups:  indeed, one can show that
\begin{equation*}
\rmH^{1}_{\new}(\QQ_{v},\Ad^{0}(\calM_{\rho}))
\end{equation*}
is trivial by \cite[Proposition 4.4]{KO}. Hence we have
\begin{equation*}
\mathrm{length}\phantom{.}\rmH^{1}_{f}(\QQ, \Ad^{0}(\calM_{\pi^{\circ}}))\geq \mathrm{length}\phantom{.}\rmH^{1}_{\calS}(\QQ,\Ad^{0}( \calM_{\pi^{\circ}}))=\ord_{\lambda}(\calP(f^{\dagger}))
\end{equation*}
\begin{remark}
In fact, we have proved in the companion work \cite{Wang} that 
\begin{equation*}
\mathrm{length}\phantom{.}\rmH^{1}_{f}(\QQ, \Ad^{0}(\calM_{\pi^{\circ}}))=\ord_{\lambda}(\calP(f^{\dagger}))
\end{equation*}
using a similar method as in this article.
\end{remark}

\begin{myproof}{Theorem}{\ref{main2}}
By the main theorem \ref{main1}, we have the following inequality
\begin{equation*}
\ord_{\lambda}(\calP_{\mathrm{dis}}(\phi^{\dagger}))\geq \mathrm{length}\phantom{.}\rmH^{1}_{f}(\QQ, \mathrm{Ad}^{0}(\calM_{\pi^{\circ}})).
\end{equation*}
On the other hand, the above discussion implies that
\begin{equation*}
\mathrm{length}\phantom{.}\rmH^{1}_{f}(\QQ, \Ad^{0}(\calM_{\pi^{\circ}}))\geq \mathrm{length}\phantom{.}\rmH^{1}_{\calS}(\QQ, \Ad^{0}(\calM_{\pi^{\circ}}))=\ord_{\lambda}(\calP(f^{\dagger})).
\end{equation*}
We have arrived at the desired inequality $\ord_{\lambda}(\calP_{\mathrm{dis}}(\phi^{\dagger}))\geq \ord_{\lambda}(\calP(f^{\dagger}))$.
\end{myproof}

\end{document}